\title[Shadow-complexity and trisection genus]
{Shadow-complexity and trisection genus}
\author{Hironobu Naoe}
\address{
Department of Mathematics, Tokyo Institute of Technology, 2-12-1 Ookayama, Meguro-ku, Tokyo, 152-8551, Japan}
\email{naoe@math.titech.ac.jp}
\author{Masaki Ogawa}
\address{
Mathematical Science Center for Co-creative Society, Tohoku University, Aoba-6-3 Aramaki, Aoba-ku, Sendai, Miyagi, 980-0845, Japan}
\email{masaki.ogawa.b7@tohoku.ac.jp}
\subjclass[2020]{Primary 57K41, 57Q15; Secondary 57R65.}
\theoremstyle{plain}
\newtheorem{theorem}{Theorem}[section]
\newtheorem{lemma}[theorem]{Lemma}
\newtheorem{proposition}[theorem]{Proposition}
\theoremstyle{definition}
\newtheorem{definition}[theorem]{Definition}
\newtheorem{remark}[theorem]{Remark}
\newtheoremstyle{mycitation}%   
    {}%                      
    {}%                    
    {\it}%          
    {}%                  
    {\bf}%             
    {}%                    
    {5pt}%                
    {\thmname{#1} \thmnumber{#2}\thmnote{#3}.}%
\theoremstyle{mycitation}
\newtheorem*{citingtheorem}{Theorem}
\newcommand{\Z}{\mathbb{Z}}
\newcommand{\R}{\mathbb{R}}
\newcommand{\C}{\mathbb{C}}
\newcommand{\CP}{\mathbb{CP}^2}
\newcommand{\mCP}{\overline{\mathbb{CP}}^2}
\newcommand{\RP}{\mathbb{RP}}
\newcommand{\Int}{\mathrm{Int}}
\newcommand{\Nbd}{\mathrm{Nbd}}
\newcommand{\gl}{\mathfrak{gl}}
\newcommand{\shco}{\mathrm{sc}}
\newcommand{\spshco}{\mathrm{sc}^{\mathrm{sp}}}
\definecolor{darkred}{rgb}{.5,.0,.0}
\definecolor{darkblue}{rgb}{.0,.0,.70}
\definecolor{bl}{gray}{0.7}
\definecolor{myred}{rgb}{.8,.0,.0}
\definecolor{mygreen}{rgb}{.0,.6,.0}
\definecolor{mygray}{gray}{0.7}
\newlength{\myheight}
\newlength{\myheighta}
\long\def\@makecaption#1#2{
  \small
  \vskip\abovecaptionskip
  \sbox\@tempboxa{#1. #2}
  \ifdim \wd\@tempboxa >\hsize
    #1. #2\par
  \else
    \global \@minipagefalse
    \hb@xt@\hsize{\hfil\box\@tempboxa\hfil}
  \fi
  \vskip\belowcaptionskip}
\begin{document}
\begin{abstract}
The shadow-complexity is an invariant of closed $4$-manifolds defined by using $2$-dimensional polyhedra called Turaev's shadows, 
which, roughly speaking, measures how complicated a $2$-skeleton of the $4$-manifold is. 
In this paper, we define a new version $\mathrm{sc}_{r}$ of shadow-complexity depending on an extra parameter $r\geq0$, 
and we investigate the relationship between this complexity and the trisection genus $g$. 
More explicitly, we prove an inequality $g(W) \leq 2+2\mathrm{sc}_{r}(W)$ for any closed $4$-manifold $W$ and any $r\geq1/2$. 
Moreover, we determine the exact values of $\mathrm{sc}_{1/2}$ for infinitely many $4$-manifolds, 
and also we classify all the closed $4$-manifolds with $\mathrm{sc}_{1/2}\leq1/2$. 
\end{abstract}
\maketitle

%==================================================================================
\section{Introduction}
% Combinatorial methods of representing $4$-manifolds have been given, for example, 
% Kirby diagrams, trisection diagrams, and Turaev's shadows. 
% Using such descriptions, one can define invariants as the minimums of certain complexities of the descriptions. 
% The main purpose of this paper is to compare them with each other. 
%
A \textit{shadow} is a locally-flat simple polyhedron embedded in a connected closed oriented smooth $4$-manifold as a $2$-skeleton
(see Definition~\ref{def:shadow}), which was introduced by Turaev for the purpose of studying quantum invariants \cite{Tur94}. 
Afterwards, Costantino provided some applications of shadows to the topology of $3$- and $4$-manifolds. 
For example, we refer the reader to \cite{Cos06,Cos08} for the studies of Stein structures, 
Spin${}^c$ structures and almost complex structures of connected oriented smooth $4$-manifolds with boundary. 
In \cite{Cos06b}, he defined invariants of $3$- and $4$-manifolds called the \textit{shadow-complexity} $\shco$ and 
the \textit{special shadow-complexity} $\spshco$ as the minimum numbers of certain vertices called true vertices of shadows of a fixed manifold. 
% These can be considered as analogues of the Matveev complexity of $3$-manifolds. 
The shadow-complexity of $3$-manifolds is closely related with the Gromov norm and stable maps of $3$-manifolds \cite{CT08,IK17},
which provided a geometric perspective on the shadow-complexity of $3$-manifolds. 
% Especially, the set of $3$-manifolds with shadow-complexity zero coincides with that of graph manifolds. 
% \Erase{Particularly, the shadow-complexity for $3$-manifolds is studied by comparing it with another invariant, namely Gromov norm, of $3$-manifolds, 
% and its geometric perspective has been established.} 
In contrast to such studies, 
the shadow-complexity for $4$-manifolds has been studied about the classification problem \cite{Cos06b,KMN18,Mar11,Nao17,Nao23}. 
This paper aims to investigate a behavior of the shadow-complexity of $4$-manifolds, 
and we provide a comparison between it and the \textit{trisection genus} in particular. 

A \textit{trisection} is a decomposition of connected closed oriented smooth $4$-manifold 
into three $4$-dimensional $1$-handlebodies (see Definition~\ref{def:trisection} for the precise definition). 
The intersection of the three portions forms a surface, which is called the central surface of the trisection. 
The \textit{trisection genus} $g$ of a $4$-manifold is defined as the minimum genus of central surfaces of trisections of the $4$-manifold, 
and $g$ is of course an invariant of $4$-manifolds. 
Only the $4$-sphere is the closed $4$-manifold with $g=0$, 
and only $\pm\CP$ and $S^1\times S^3$ are those with $g=1$. 
The $4$-manifolds with $g=2$ were classified by Meier and Zupan \cite{MZ17}. 
The cases of $g\geq3$ are still open, and Meier conjectured in \cite{Mei18} that 
an irreducible $4$-manifold with $g=3$ is either $\mathcal{S}_p$ or $\mathcal{S}'_p$ for some integer $p\geq2$, 
where $\mathcal{S}_p$ and $\mathcal{S}'_p$ are $4$-manifolds obtained from $S^1\times S^3$ 
by surgering along a simple closed curve representing $p\in\Z\cong \pi_1(S^1\times S^3)$. 
We also refer the reader to \cite{Wil20} for the decision of the trisection genera of trivial surface bundles over surfaces. 

In this paper, we define a new kind of shadow-complexity called the $r$-\textit{weighted shadow-complexity} $\shco_r$ for each fixed $r\in\R_{\geq0}$, 
which is an invariant of $4$-manifolds. 
It takes a value in $\{m+rn\mid m,n\in\Z_{\geq0}\}$. 
The weighted shadow-complexity is defined by minimizing the sum of the number of true vertices and a ``complexity'' of regions of shadows, 
although 
% \Erase{the shadow-complexity is determined only by the number of true vertices. }
we consider only the number of true vertices with regard to the shadow-complexity. 

We establish a method to construct a trisection from a given shadow of a closed $4$-manifold via a Kirby diagram. 
This method includes how to describe a trisection diagram, 
and it allows us to estimate the trisection genus of the $4$-manifold from the combinatorial information of the shadow. 
The following is the main theorem in this paper. 
\begin{citingtheorem}
[\ref{thm:complexity_genus}]
For any closed $4$-manifold $W$ and any real number $r\geq1/2$, 
$g(W)\leq 2 + 2\shco_r(W)$. 
\end{citingtheorem}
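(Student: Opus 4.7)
The plan is to prove the inequality by constructing a trisection of $W$ directly from an efficient shadow, following the strategy sketched in the introduction. A useful preliminary reduction is that $\shco_r$ is monotone in $r$: for the same shadow $X$ with $m$ true vertices and regional cost $n$, the quantity $m + rn$ is non-decreasing in $r$, and hence $\shco_r(W) \geq \shco_{1/2}(W)$ for every $r \geq 1/2$. It therefore suffices to establish the sharp case $g(W) \leq 2 + 2\shco_{1/2}(W)$.

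Let $X$ be a shadow of $W$ realizing $\shco_{1/2}(W)$, with $c$ true vertices and regional contribution $n$. First, I would extract a Kirby diagram of $W$ directly from $X$. The 4-dimensional thickening of a simple polyhedron carries a natural handle decomposition dictated by its stratification, so the number of 1- and 2-handles is controlled by the combinatorial data of $X$: true vertices contribute a bounded number of handles, while each region contributes a 2-handle whose framing is prescribed by the gleam and whose attaching behavior reflects the regional structure. Next, I would convert this Kirby diagram into a trisection diagram, in the spirit of existing Kirby-diagram-to-trisection recipes (Gay--Kirby, Meier--Schirmer--Zupan, or a variant adapted to shadows). Counting the genus contributions, I would expect the constant $2$ to absorb the unique 0- and 4-handles of $W$, each true vertex to contribute at most $2$ to the genus, and each unit of regional cost to contribute at most $1 = 2 \cdot (1/2)$, which yields the claimed bound $g(W) \leq 2 + 2c + n$.

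The main obstacle, I anticipate, is controlling the regional contribution tightly enough. True vertices affect the Kirby diagram in a local and reasonably well-understood way, but a region with complicated boundary behavior can produce a 2-handle whose attaching curve wraps many times through 1-handles, and a naive conversion of such a Kirby diagram to a trisection would blow the genus well past any linear bound. The sharp threshold $r = 1/2$ is presumably the precise value at which each unit of regional cost contributes exactly one unit of trisection genus; attaining it likely requires an explicit, hands-on construction of the trisection diagram (including how the three handlebodies intersect along the shadow's regions) rather than a black-box application of existing results. Verifying this tight accounting and producing the trisection diagram from the Kirby diagram in a regionally efficient way is, I expect, the technical heart of the argument.
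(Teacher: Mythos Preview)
Your high-level plan matches the paper's: reduce to $r=1/2$ by monotonicity, extract a Kirby diagram from a minimizing shadow $X$, and pass to a trisection via the Meier--Zupan recipe. But you misidentify where the difficulty lies, and you miss the one step that actually carries the weight.

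The regional contribution is \emph{not} the obstacle. The paper controls it cleanly by fixing a cut system $\Gamma$ (arcs that slice every region into disks); this makes the attaching link $L_2$ and an explicit unknotting tunnel $\tau$ visible, and the genus count becomes a mechanical tally over the edges of $S(X)\cup\Gamma\cup\partial X$ outside a spanning tree. The outcome, however, is a trisection of genus $3+2c_{1/2}(X)$, not $2+2c_{1/2}(X)$. Your expectation that the constant comes out as $2$ directly is off by one.

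The real work is lowering $3$ to $2$ by exhibiting a single destabilization triple in the resulting trisection diagram, and this forces a case split. If $S(X)=\emptyset$ the shadow is $S^2$ or a bounded surface and the bound is immediate. If $\partial X\neq\emptyset$ the boundary portion of the diagram visibly contains such a triple. The delicate case is $S(X)\neq\emptyset$ and $\partial X=\emptyset$: here one needs a triple line bordered by at least two \emph{distinct} regions, and that requires the nontrivial fact (Lemma~\ref{lem:closed_polyh}) that a closed simple polyhedron with a single region is never a shadow of any closed $4$-manifold --- proved via the JSJ/geometric decomposition of the boundary $3$-manifold of its thickening. Without this lemma the destabilization has nothing to grab, and your argument stalls at $g(W)\leq 3+2\shco_{1/2}(W)$.
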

The equality $g(W) = 2 + 2\shco_{1/2}(W)$ is attained, for instance, by 
% $W=S^2\times S^2$, $\CP\#\CP$, $\CP\#\mCP$, $\mCP\#\mCP$, $\mathcal{S}_2$, ${\mathcal{S}}'_2$, $\mathcal{S}_3$. 
$W=k_1(S^2\times S^2)\# k_2\CP\# k_3\mCP$ for any $k_1,k_2,k_3\in\Z_{\geq0}$. 
In this sense, we can say that the inequality is the best possible (cf. Remark~\ref{rmk:best_result}). 

We compare the $3$ series of the shadow-complexities 
$\shco$, $\shco_r$ and $\spshco$ with each other. More concretely, we show in Proposition~\ref{prop:rel_sh} the following
\[
\shco(W)=\shco_0(W)\leq\shco_r(W)\leq\shco_{r'}(W)\leq\shco_2(W)=\spshco(W)
\]
for any closed $4$-manifold $W$ and $r,r'\in\R$ with $0\leq r<r'$. It is remarkable that $\shco_r$ is finite-to-one invariant if $r>0$, 
which will be shown in Proposition~\ref{prop:finite_to_one}. 
% \Erase{Note that $\shco$ is not so, and $\spshco$ is so.}
Note that $\spshco$ is also finite-to-one, but neither is $\shco$. 

The minimum of $r$ satisfying the inequality in Theorem~\ref{thm:complexity_genus} is $1/2$ (cf. Remark~\ref{rmk:best_result}), 
so we then focus on the behavior of $\shco_{1/2}$. 
Note that $\shco_{1/2}$ takes values in non-negative half integers. 
In Proposition~\ref{prop:Examples}, we determine the exact values of $\shco_{1/2}$ for infinitely many closed $4$-manifolds by using Theorem~\ref{thm:complexity_genus}. 
We also give the classification of all the $4$-manifolds with $1/2$-weighted shadow-complexity at most $1/2$. 
\begin{citingtheorem}
[\ref{thm:complexity0}]
The $1/2$-weighted shadow-complexity of a closed $4$-manifold $W$ is $0$ 
if and only if $W$ is diffeomorphic to either one of 
$S^4$, $\CP$, $\mCP$, $S^2\times S^2$, $2\CP$, $\CP\#\mCP$ or $2\mCP$. 
% \[
% S^4,\ \CP,\ \mCP,\ S^2\times S^2,\ \CP\#\CP,\ \CP\#\mCP\text{ or }\mCP\#\mCP. 
% \]
\end{citingtheorem}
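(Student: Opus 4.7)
The plan is to prove both implications. The starting observation is that, because $\shco_{1/2}$ is a weighted sum of non-negative quantities, $\shco_{1/2}(W)=0$ can only be realised by a shadow $P$ whose number of true vertices is zero \emph{and} whose region contribution vanishes. The former implies that $\Sing(P)$ is a disjoint union of triple circles along which exactly three local sheets meet, and the latter forces every $2$-region of $P$ to be of the most elementary topological type permitted by the definition of $\shco_r$. Combined, these constraints make $P$ very restricted.

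For the \emph{if} direction, for each of the seven manifolds on the list I exhibit an explicit shadow with zero true vertices and zero region contribution. For $S^4$ a single point suffices; $\CP$ and $\mCP$ each admit a shadow that is a single disk with an appropriate half-integer gleam; $S^2\times S^2$ admits a shadow that is a $2$-sphere with gleam $0$; and each of $2\CP$, $\CP\#\mCP$, $2\mCP$ admits a shadow built from two such basic pieces glued along an arc of their boundaries with appropriate gleams. In every case the resulting polyhedron has zero true vertices and zero region contribution, and translating it to a Kirby diagram recovers the claimed $4$-manifold; this gives $\shco_{1/2}(W)\leq 0$ and hence equality.

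For the \emph{only if} direction, let $P$ be a shadow of $W$ realising $\shco_{1/2}(W)=0$. I first classify the possible combinatorial types of $P$ by analysing the incidence of regions along each triple circle; with $\Sing(P)$ a disjoint union of triple circles and every region of the simplest admissible type, this produces a short explicit list of polyhedra. For each combinatorial type I then enumerate the possible half-integer gleam assignments and compute the corresponding closed $4$-manifold via the Kirby diagram construction used in the proof of Theorem~\ref{thm:complexity_genus}. Identifying each output with one of the seven manifolds on the list completes the argument.

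The main obstacle is this last enumeration step: one must verify that the combinatorial list of admissible $P$'s is exhaustive and, for each type, that no gleam assignment produces a $4$-manifold outside the listed seven. I expect this to be manageable precisely because the constraint $\shco_{1/2}=0$ forces $P$ to have so little structure; nevertheless, the bookkeeping must be carried out carefully, and candidates must be ruled out or identified by appeal to known classifications of low-complexity shadows such as those developed in \cite{KMN18,Nao17,Nao23}, together with the fact that no move introducing a true vertex is allowed in this regime.
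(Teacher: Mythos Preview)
Your ``if'' direction contains concrete errors. A single point is not a simple polyhedron under the paper's definition (it matches none of the local models (i)--(iv)), so it cannot serve as a shadow of $S^4$. A disk $D^2$ has empty singular set and no internal region, hence carries no gleam at all; its unique thickening is $B^4$, so it is a shadow only of $S^4$, not of $\CP$ or $\mCP$. Likewise, $S^2$ with gleam $0$ has thickening $S^2\times D^2$, whose boundary $S^2\times S^1$ is capped by $S^1\times B^3$ to yield $S^4$, not $S^2\times S^2$. The correct shadows are: $S^2$ with gleam $0,\pm1$ for $S^4,\CP,\mCP$; and the polyhedron $X_2$ (two disks glued to a sphere along a common circle, encoded by \raisebox{-1pt}{\scriptsize$\bullet\!-\!\!\!\circ\!\!\!-\!\bullet$}) with suitable gleams for $S^2\times S^2$, $2\CP$, $\CP\#\mCP$, $2\mCP$.

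Your ``only if'' direction misses the main structural shortcut. Once you observe that $c_{1/2}(X)=0$ forces $c(X)=0$ and $\chi(R)=1$ for every region, you should recognise that (apart from the case $X\cong S^2$) this says precisely that $X$ is a \emph{special} polyhedron with no true vertices, i.e.\ $\spshco(W)=0$. At that point there is nothing left to enumerate: Costantino \cite[Theorem~1.1]{Cos06b} already classified the closed $4$-manifolds with $\spshco=0$, and his list together with the three manifolds coming from $X\cong S^2$ gives exactly the seven manifolds in the statement. This is how the paper proceeds, and it replaces your proposed case-by-case enumeration of polyhedra and gleams with a single citation.
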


\begin{citingtheorem}
[\ref{thm:complexity1/2}]
The $1/2$-weighted shadow-complexity of a closed $4$-manifold $W$ is $1/2$ 
if and only if 
$W$ is diffeomorphic to either one of $3\CP$, $2\CP\#\mCP$, $\CP\#2\mCP$, $3\mCP$, 
$S^1\times S^3$, $(S^1\times S^3)\#\CP$, $(S^1\times S^3)\#\mCP$, $\mathcal{S}_2$, ${\mathcal{S}}'_2$ or $\mathcal{S}_3$. 
\end{citingtheorem}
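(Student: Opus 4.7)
The plan has two directions. For the ``if'' direction, I would explicitly construct, for each of the ten manifolds on the list, a shadow with no true vertex and exactly one non-disk region, thereby realizing $\shco_{1/2}(W)\le 1/2$. Combined with the classification of $\shco_{1/2}=0$ manifolds in Theorem~\ref{thm:complexity0} (which excludes every manifold on the list), this yields equality. Natural candidates arise by modifying the minimum-complexity shadows of the manifolds in Theorem~\ref{thm:complexity0}: one takes a connected sum of such shadows with either a small piece encoding an extra $S^1\times S^3$ summand, or a surgery description encoding the defining integer of $\mathcal{S}_p$ or $\mathcal{S}'_p$, in each case adding only one non-disk region.

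For the ``only if'' direction, assume $\shco_{1/2}(W)=1/2$. Theorem~\ref{thm:complexity_genus} gives $g(W)\le 3$. Moreover, $W$ admits a shadow $X$ with zero true vertices and exactly one non-disk region. Since the true-vertex count vanishes, the singular set $S(X)$ is a disjoint union of circles, along which three (disk or the single non-disk) strata meet in the standard book-of-three-pages pattern. Consequently, $X$ is determined by a simple combinatorial datum: the number of circles in $S(X)$, the identification of each region with a surface (all disks but one), and the attaching words along $S(X)$. A direct Euler-characteristic count together with the constraint of one non-disk region produces a short, explicit list of combinatorial types of $X$, which I would enumerate one by one.

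For each combinatorial type, I would then run through the admissible half-integer gleams on the regions and reconstruct the ambient $4$-manifold, either via Turaev reconstruction or via the Kirby diagram produced in the proof of Theorem~\ref{thm:complexity_genus}. Kirby calculus then identifies the resulting manifold with one of the ten on the list, using the standard correspondences between $\pm 1$-framed unknots and $\pm\CP$ summands, between $0$-framed meridians of dotted circles and $S^1\times S^3$ summands, and between appropriately framed surgery curves and the manifolds $\mathcal{S}_p$, $\mathcal{S}'_p$. The \textbf{main obstacle} is precisely this final step: the gleam analysis produces many candidate Kirby diagrams, and reducing each to one of the ten target manifolds---in particular distinguishing $\mathcal{S}_p$ from $\mathcal{S}'_p$ and ruling out extraneous candidates such as $\#^2(S^1\times S^3)$---will require a careful case-by-case application of Kirby moves together with classical invariants (fundamental group, intersection form, spin type).
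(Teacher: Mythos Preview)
Your overall strategy---enumerate the finitely many homeomorphism types of simple polyhedra $X$ with $c(X)=0$ and exactly one region of Euler characteristic~$0$, then determine for each which closed $4$-manifolds it can be a shadow of---is exactly the route the paper takes. However, you are missing several ingredients that make the case analysis feasible, and one of your proposed tools is a red herring.

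First, the bound $g(W)\le 3$ from Theorem~\ref{thm:complexity_genus} is not used anywhere in the paper's argument and would not help: genus-$3$ trisections are not classified, so knowing $g\le 3$ does not narrow the target list. Second, your plan to ``run through the admissible half-integer gleams'' and reduce each resulting Kirby diagram by hand underestimates the work involved. The paper avoids most of this by invoking three structural shortcuts you do not mention:
\begin{itemize}
\item Costantino's obstruction (Lemma~\ref{lem:Costantino}): if $H_2(X)=0$ and $H_1(X)$ has torsion, then $X$ is not a shadow of any closed $4$-manifold. This immediately disposes of six of the combinatorial types with no gleam analysis at all.
\item Martelli's classification (Theorem~\ref{thm:finite_pi1}) of closed $4$-manifolds with $\shco=0$ and $|\pi_1|\le 3$: since every $X$ in the list has $c(X)=0$, any resulting $W$ has $\shco(W)=0$, so whenever $\pi_1(X)$ is finite of order $\le 3$ one reads off $W$ directly from $\pi_1$ and $b_2$, bypassing gleams and Kirby moves entirely. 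This handles most of the remaining types, including the identifications of $\mathcal{S}_2$, $\mathcal{S}'_2$, $\mathcal{S}_3$.
\item Several polyhedra collapse onto polyhedra with $c_{1/2}=0$, so any $W$ they bound already has $\shco_{1/2}(W)=0$ and is excluded.
\end{itemize}
The genuinely delicate cases are the two polyhedra with infinite $\pi_1$ encoded by the graph~(a17). There the paper does \emph{not} rely on Kirby calculus alone: it computes elementary ideals of $\pi_1(\partial M_X)$ and invokes Wu's theorem on Dehn surgery on arborescent links to pin down the admissible gleams. Your toolkit of ``fundamental group, intersection form, spin type'' would not obviously suffice here; you should expect to need finer invariants of the boundary $3$-manifold to force $\partial M_X\cong \#_k(S^1\times S^2)$.
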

% In \cite{Mar05}, Martelli studied a certain invariant, which will be denoted by $w$ here, 
% defined by introducing a complexity called the \textit{weight} of Kirby diagrams. 
% Then he showed the following inequalities
% \[
% \frac13\spshco(W)\leq w(W) \leq 9 \spshco(W)+8 
% \]
% for any closed oriented smooth $4$-manifolds $W$, 
% where $\spshco$ is the special shadow-complexity. 
% By an easy discussion, we obtain 
% \[
% g(W)\leq w(W). 
% \]

\subsection*{Acknowledgement}
The first author was supported by JSPS KAKENHI Grant Number JP20K14316 and JSPS-VAST Joint Research Program Grant number JPJSBP120219602. 
%==================================================================================
\section{Preliminaries}
\subsection{Assumption and notations}
\begin{itemize}
 \item 
Any manifold is supposed to be compact, connected, oriented and smooth unless otherwise mentioned. 
 \item 
For triangulable spaces $A\subset B$, 
let $\Nbd(A;B)$ denote a regular neighborhood of $A$ in $B$. 
 \item 
For an $n$-manifold $W$ with $\partial W=\emptyset$ (resp. $\partial W\ne\emptyset$) 
and an integer $k$, 
we will use the notation $kW$ for 
the connected sum (resp. boundary connected sum) of $k$ copies of $W$ if $k>0$, 
for $S^n$ (resp. $B^n$) if $k=0$, 
and for the connected sum (resp. boundary connected sum) of $|k|$ copies of $W$ with the opposite orientation if $k<0$. 
 \item 
Let $\Sigma_{g,b}$ denote a compact surface of genus $g$ with $b$ boundary components. If $b=0$, we will write it as $\Sigma_g$ simply. 
\end{itemize}
% - - - - - - - - - - - - - - - - - - - - - - - - - - - - - - - - - - - - - - - - -
\subsection{Simple polyhedra and shadows}
Let $X$ be a connected compact space. 
We call $X$ a \textit{simple polyhedron} 
if a regular neighborhood $\Nbd(x;X)$ of each point $x\in X$ 
is homeomorphic to one of (i)-(iv) shown in Figure~\ref{fig:local_model}. 
% - - - - - - - - - - - - - - - - -
\begin{figure}[tbp]
\labellist
\footnotesize\hair 2pt
\pinlabel (i) [B] at    48.19 8.73
\pinlabel (ii) [B] at  147.40 8.73
\pinlabel (iii) [B] at 246.61 8.73
\pinlabel (iv) [B] at  334.49 8.73
\endlabellist
\centering
\includegraphics[width=.65\hsize]{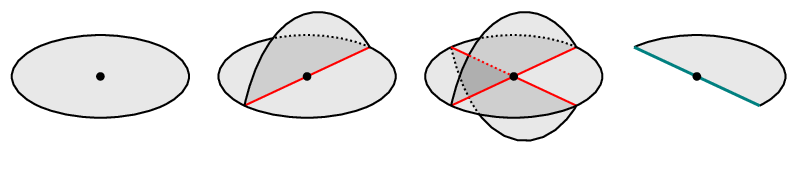}
\caption{Local models of simple polyhedra.}
\label{fig:local_model}
\end{figure}
% - - - - - - - - - - - - - - - - -
A point of type~(iii) is called a \textit{true vertex}. 
The set of all points of types (ii) and (iii) is called the \textit{singular set} of $X$, 
and it is denoted by $S(X)$. 
Note that $S(X)$ is disjoint union of circles and quartic graphs. 
A connected component of $S(X)$ with the true vertices removed is called a \textit{triple line}. 
Each connected component of $X\setminus S(X)$ is called a \textit{region}, 
and hence a region is homeomorphic to some surface. 
If $X$ has only disk regions, then $X$ is said to be \textit{special}. 
The set of points of type~(iv) is the \textit{boundary} of $X$, 
which is denoted by $\partial X$. 
If $\partial X$ is empty, the simple polyhedron $X$ is said to be \textit{closed}. 
If a region does not intersect $\partial X$, 
it is called an \textit{internal region}, and otherwise a \textit{boundary region}. 

Before defining shadows of $4$-manifold, 
we note that a simple polyhedron $X$ embedded in a $4$-manifold $W$ is said to be \textit{locally-flat} 
if a neighborhood $\Nbd(x;X)$ of each point $x\in X$ is contained in a smooth $3$-ball in $W$. 
\begin{definition}
\label{def:shadow}
% \begin{enumerate}
%  \item 
% A simple polyhedron $X$ embedded in a $4$-manifold $M$ with non-empty boundary is a \textit{shadow} 
% of $M$ if $M$ collapses onto $X$ and $X\cap \partial M=\partial X$. 
%  \item 
A simple polyhedron $X$ embedded in a closed $4$-manifold $W$ local-flatly is a \textit{shadow} of $W$ 
if $W\setminus\Int\Nbd(X;W)$ is diffeomorphic to $k(S^1\times B^3)$ for some $k\in\Z_{\geq0}$. 
% there exists an embedded graph $G$ such that $W\setminus \Int\Nbd(G;W)$ collapses onto $X$. 
%  \item 
% A simple polyhedron $X$ is a \textit{shadow} of a closed $3$-manifold $N$ 
% if $X$ is a shadow of a $4$-manifold whose boundary is diffeomorphic to $N$. 
% \end{enumerate}
\end{definition}
The notion of shadows was introduced by Turaev, who showed the following. 
\begin{theorem}[Turaev \cite{Tur94}]
\label{thm:cloed_4-manifold_shadow}
Any closed $4$-manifold admits a shadow. 
\end{theorem}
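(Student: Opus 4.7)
The plan is to construct the shadow explicitly from a handle decomposition of $W$. First I would fix a handle decomposition with a single $0$-handle, some $1$-handles, some $2$-handles, $k$ three-handles and a single $4$-handle, and denote by $W_{\leq 2}$ the union of handles of index at most $2$. By turning the handle decomposition upside down, the complement $W\setminus\Int W_{\leq 2}$ inherits a handle decomposition consisting only of $0$- and $1$-handles (the duals of the $4$-handle and the $3$-handles), hence is diffeomorphic to $k(S^1\times B^3)$ in the notation of the paper. Thus it suffices to produce a locally flat simple polyhedron $X\subset W_{\leq 2}$ whose regular neighborhood in $W$ recovers $W_{\leq 2}$.

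To construct such an $X$, I would use a Kirby diagram of $W_{\leq 2}$ on $S^{3}=\partial B^{4}$, with dotted circles representing the $1$-handles and framed knots representing the $2$-handles. After isotoping the link into general position with respect to a $2$-sphere $S^{2}\subset S^{3}$, its projection onto $S^{2}$ is a $4$-valent graph. Define $X$ as the union of $S^{2}$ with one $2$-disk $D_i$ for each link component, where each $D_i$ is bounded by the $i$-th component and pushed slightly into $B^{4}$ on a prescribed side, with a framing-correcting twist near $\partial D_i$ so that the prescribed framings of $2$-handles are realized. Along each boundary circle $\partial D_i\subset S^{2}$ three half-planes ($S^{2}$ split by the arc plus the half-disk $D_i$) meet along a line, producing a triple line of type (ii) in Figure~\ref{fig:local_model}; at each crossing of the projection, $S^{2}$ is locally divided into four quadrants while two half-disks $D_i,D_j$ meet at the crossing point, giving exactly the true-vertex model of type (iii).

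It remains to verify $\Nbd(X;W)=W_{\leq 2}$, which I would do handle by handle: a regular neighborhood of $S^{2}$ in $B^{4}$ recovers the $0$-handle, a regular neighborhood of each disk bounded by a dotted circle recovers the corresponding $1$-handle, and a regular neighborhood of each disk bounded by a framed knot (with the corrected framing) recovers the corresponding $2$-handle. Combining this with the first step yields $W\setminus\Nbd(X;W)\cong k(S^{1}\times B^{3})$, so $X$ is a shadow in the sense of Definition~\ref{def:shadow}.

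The main obstacle, I expect, lies in the precise bookkeeping of $2$-handle framings: a generic push-in of a disk spanning a framed knot into $B^{4}$ generically alters its self-intersection, and hence the framing it induces on $\partial D_i$, away from the prescribed value. One must therefore compensate by introducing local twists or auxiliary small disks in a controlled way that corrects the framing without destroying the simple-polyhedron local structure or the local flatness of $X$ in $W$. A closely related subtlety is verifying the transverse-sheet picture at each crossing so that the true-vertex model (iii) really appears locally, and checking that no unintended self-intersections of the pushed-in disks occur away from $S^{2}$.
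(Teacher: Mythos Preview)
The paper does not supply its own proof; the theorem is quoted from Turaev \cite{Tur94} and used as a black box. So there is nothing in the paper to compare your argument against directly.

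Your overall strategy---build $X$ from a link projection of a Kirby diagram---is the standard one, but the execution has two genuine gaps.

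First, the claim that $\Nbd(S^{2};B^{4})$ recovers the $0$-handle is false: for $S^{2}\subset S^{3}=\partial B^{4}$ one has $\Nbd(S^{2};B^{4})\cong S^{2}\times D^{2}$, which is homotopy equivalent to $S^{2}$ rather than to a point and has boundary $S^{2}\times S^{1}\not\cong S^{3}$. The usual construction takes a \emph{disk} $D^{2}$ (the projection plane minus its point at infinity) as the base piece of the polyhedron, so that its $4$-dimensional thickening is genuinely $B^{4}$.

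Second, your treatment of $1$-handles does not work. A dotted circle encodes a $1$-handle by \emph{carving}: one removes $\Nbd(D;B^{4})$ for a spanning disk $D$ of the dotted unknot. Including that disk in $X$ and then taking a regular neighborhood instead \emph{attaches} a $2$-handle along the dotted circle, producing (depending on the induced framing) $S^{2}\times D^{2}$ or $(\pm\CP)\setminus\Int B^{4}$, never $S^{1}\times B^{3}$. The fix is either to build the base of $X$ as a spine of the $4$-dimensional $1$-handlebody $H^{0}\cup H^{1}$ (a thickened graph, onto which one then projects the attaching link of the $2$-handles), or to arrange that the regions of $X$ coming from dotted circles meet $\partial X$ so that the thickening acquires the correct $1$-handle topology.
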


The \textit{complexity} of a simple polyhedron $X$ is the number of true vertices of $X$. 
Theorem~\ref{thm:cloed_4-manifold_shadow} allows us to define an invariant of closed $4$-manifolds like the Matveev complexity of $3$-manifolds. 
\begin{definition}

Let $W$ be a closed $4$-manifold. 
The \textit{shadow-complexity} $\shco(W)$ of $W$ is defined as the minimum of the complexities over all shadows of $W$.
The \textit{special shadow-complexity} $\spshco(W)$ of $W$ is defined as the minimum of the complexities over all special shadows of $W$.
\end{definition}
This notion was introduced by Costantino in \cite{Cos06b}. 
See \cite{CT08,IK17,KMN18,Mar11} for the studies regarding the (special) shadow-complexity. 
 
\subsection{Gleams and shadowed polyhedra}
We then define the $\Z_2$-\textit{gleam} of a simple polyhedron $X$. 
Let $R$ be an internal region of $X$. 
Then $R$ is homeomorphic to the interior of some compact surface $F$, 
and the homeomorphism $\Int F\to R$ will be denoted by $f$. 
This $f$ can extend to a local homeomorphism $\overline{f}:F\to X$. 
Moreover, there exists a simple polyhedron $\widetilde{F}$ obtained from $F$ 
by attaching an annulus or a M\"obius band to each boundary component of $F$ along the core circle 
such that $\overline{f}$ can extend to a local homeomorphism $\widetilde{f}:\widetilde{F}\to X$. 
Then the number of the M\"obius bands attached to $F$ modulo $2$ 
is called the $\Z_2$-\textit{gleam} of $R$ and is denoted by $\gl_2(R)\in\{0,1\}$. 
% Note that this number depends only on the combinatorial data of $X$. 
Note that this number is determined only by $X$ combinatorially. 
\begin{definition}
A \textit{gleam function}, or simply \textit{gleam}, of a simple polyhedron $X$ is 
a function associating to each internal region $R$ of $X$ 
a half-integer $\gl(R)$ satisfying $\gl(R)+\frac12\gl_2(R)\in\Z$. 
The value $\gl(R)$ is called the \textit{gleam} of $R$. 
A simple polyhedron equipped with a gleam is called a \textit{shadowed polyhedron}. 
\end{definition}

\begin{theorem}[Turaev \cite{Tur94}]
There exists a canonical way to associate to a shadowed polyhedron $X$ a $4$-manifold $M_X$ with boundary such that 
\begin{itemize}
 \item
$X$ is local-flatly embedded in $M_X$, 
 \item
$M_X$ collapses onto $X$, and 
 \item 
$X\cap\partial M_X=\partial X$. 
\end{itemize}
\end{theorem}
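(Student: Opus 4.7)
The plan is to build $M_X$ by a local-to-global construction: thicken each stratum of $X$ to a canonical 4-dimensional piece, glue them along the singular set, and finally attach 2-disk bundles over the regions whose Euler numbers are prescribed by the gleam. I would organize the argument in four stages.

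First, I would fix standard 4-dimensional thickenings for each of the four local models in Figure~\ref{fig:local_model}. A smooth point of type~(i) thickens to $D^2\times D^2$; a triple-line point of type~(ii) thickens to $Y\times D^2$, where $Y$ is the tripod (three half-disks meeting along a diameter); a true vertex of type~(iii) thickens to a 4-ball in which the cone on the 1-skeleton of a tetrahedron sits canonically; a boundary point of type~(iv) thickens to a half-space model. Each local model carries a preferred local-flat embedding of the corresponding neighborhood in $X$, and the ambient thickening is unique up to isotopy rel.\ the polyhedron.

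Second, using these local models, I would construct a 4-dimensional regular neighborhood $N$ of $\Nbd(S(X)\cup\partial X;X)$ in the would-be $M_X$ by gluing the local models along their overlaps. The combinatorial matching pattern inside $X$ (which triple lines meet at which true vertex, which regions abut which triple line) prescribes the identifications, and no choice is needed up to isotopy, so this yields a canonical thickening of a neighborhood of $S(X)\cup\partial X$.

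Third, I would extend $N$ over each internal region $R$. Writing $R=\Int F$ for a compact surface $F$, I want to attach a 2-disk bundle $E_R\to F$ whose restriction over a collar of $\partial F$ agrees with the circle bundle already present on the corresponding part of $\partial N$. That boundary circle bundle is canonical, and the remaining freedom in attaching $E_R$ is exactly its Euler number relative to this framing; I declare this number to be $\gl(R)$. The half-integer ambiguity disappears precisely because a transverse circle to a triple line double-covers a small circle in the adjacent region, so an Euler count that reads as a half-integer against the base circle lifts to an integer count against the transverse circle; the congruence $\gl(R)+\tfrac12\gl_2(R)\in\Z$ is exactly the compatibility with the $\Z_2$-framing inherited from attachments of M\"obius versus annular bands along $\partial F$. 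Gluing all $E_R$ to $N$ produces $M_X$.

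Fourth, I would verify the three listed properties. Local-flatness of $X\hookrightarrow M_X$ holds by construction because each local model is smoothly embedded in a 3-ball inside the 4-dimensional piece. The collapse $M_X\searrow X$ is assembled from the obvious collapses of each disk bundle onto its zero section and of $N$ onto $S(X)\cup\partial X$, which are compatible on overlaps. Finally, $X\cap\partial M_X=\partial X$ because the interiors of the $E_R$ and of $N$ meet $X$ only in the regions and in the singular set respectively, while the boundary stratum of $\partial X$ is built into $\partial N\subset\partial M_X$ from the type (iv) local model.

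The main obstacle is the third stage: one must check that the gleam datum, an a priori half-integer per region, does assemble unambiguously into a well-defined gluing, and that the resulting manifold is independent of auxiliary choices such as collar identifications and orientations of regions. This requires tracking how the normal framings along a triple line rotate as one circles its boundary in each of the three adjacent regions, and confirming that the $\Z_2$-gleam correction $\tfrac12\gl_2(R)$ precisely cancels the parity defect coming from non-orientable region closures.
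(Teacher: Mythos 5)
The paper offers no proof of this statement: it is quoted from Turaev's book \cite{Tur94}, and your outline is precisely the construction given there (and reproduced in Costantino's and Martelli's work): canonical $4$-dimensional blocks over a neighborhood of $S(X)\cup\partial X$, then $D^2$-bundle blocks over the regions, glued with a twist prescribed by the gleam. So the route is the right one; the question is whether what you wrote establishes it.

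Two points keep it from being a proof as written. First, the local blocks are misdescribed: if $Y$ denotes the type-(ii) model (three half-disks meeting along a diameter), then $Y\times D^2$ is not a $4$-manifold, since $Y$ is not a surface; the correct block is a regular neighborhood of $Y$ in a $3$-ball, times an interval --- a copy of $D^2\times D^2$ in which $Y$ sits inside a $3$-dimensional slice --- and likewise the true-vertex block is $B^3\times[-1,1]$ with the cone on the tetrahedral $1$-skeleton in $B^3\times\{0\}$. This is not cosmetic: local flatness (the first bullet) is exactly the assertion that $X$ lies in $3$-dimensional slices of these blocks, so it must be built into them correctly. Second, and more seriously, the whole content of the theorem is your third stage, which you label ``the main obstacle'' but do not carry out. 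One must show that the attaching locus on the singular-block side is an interval bundle over each attaching circle (an annulus or M\"obius band, as in the paper's gleam recipe), that regluings of the region block along it differ by an integer relative twist, that this twist is shifted by a half-integer exactly when the interval bundle is twisted --- which is where the condition $\gl(R)+\frac12\gl_2(R)\in\Z$ enters --- and that the result is independent of the auxiliary choices (collars, orientations of regions, order of attachment). Without that verification the ``canonical way'' asserted in the statement has not been established; with it, your argument is Turaev's.
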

\begin{remark}
The polyhedron $X$ is also called a shadow of $M_X$, 
and the $4$-manifold $M_X$ with boundary is often called the $4$-dimensional thickening of $X$. 
\end{remark}

For a shadowed polyhedron $X$, if $\partial M_X$ is diffeomorphic to $k(S^1\times S^2)$, 
one can obtain a closed $4$-manifold $W$ by gluing $k(S^1\times B^3)$ to $M_X$ along their boundaries. 
It is easy to see that $X$ is embedded in the $4$-manifold $W$ as a shadow. 
Due to Laudenbach and Po\'enaru \cite{LP72}, $W$ is uniquely determined up to diffeomorphism, 
and hence shadowed polyhedra can be treated as a description of closed $4$-manifolds. 

Conversely, if a shadow $X$ of a closed $4$-manifold $W$ is given, 
there exists a canonical way to compute a gleam of $X$ 
such that the obtained shadowed polyhedron describes the $4$-manifold $W$ in the above sense.  
% A shadowed polyhedron obtained in such a way actually describes the $4$-manifold $W$. 
Here we review how to compute the gleam below. 
Let $W$ be a closed $4$-manifold and $X$ a shadow of $W$. 
Let $R$ be an internal region of $X$, 
and hence the boundary of $R$ (as a topological space) is contained in $S(X)$. 
Set $X_S=\Nbd(S(X);X)$ and $\bar{R}=R\setminus\Int X_S$. 
As shown in \cite{Mar05}, 
there exists a $3$-manifold $N_S$ with boundary satisfying 
\begin{itemize}
 \item
$N_S$ is smoothly embedded in $W$, 
 \item 
$N_S\cap X=X_S$, and 
 \item 
$N_S$ collapses onto $X_S$. 
\end{itemize}
Note that $N_S$ is homeomorphic to the disjoint union of some $3$-dimensional handlebodies that are possibly non-orientable. 
Set $I_R=\Nbd(\partial \bar{R};\partial N_S)$, 
which can be seen as an interval-bundle over $\partial \bar{R}$. 
Thus, $I_R$ is the disjoint union of some annuli and M\"obius bands. 
Let $\bar{R}'$ be a small perturbation of $\bar{R}$ such that $\partial \bar{R}'\subset I_R$, 
and we can assume that $\bar{R}$ and $\bar{R}'$ intersect transversely at a finite number of points. 
Then the gleam we require is given by 
\[
\gl(R)=\#(\Int \bar{R}\cap\Int \bar{R}')+\frac12\#(\partial \bar{R}\cap\partial \bar{R}'), 
\]
where the intersections are counted with signs. 
% - - - - - - - - - - - - - - - - - - - - - - - - - - - - - - - - - - - - - - - - -
\subsection{Encoding graph}
\label{subsec:Encoding_graph}
In this subsection, we review an encoding graph that is a graph describing a simple polyhedron without true vertices. 
Set 
\[
Y=\left\{z\in\C\mid \arg z \in\{0,\ 2\pi/3,\ 4\pi/3\},|z|\leq1 \right\}\cup\{0\}, 
\]
and let $f_{111}$, $f_{12}$ and $f_{3}$ be self-homeomorphisms on $Y$ 
that send $z$ to, respectively, $z$, $\bar z$ and $e^{2\pi\sqrt{-1}/3}z$. 
Then, for $\sigma\in\{111,12,3\}$, 
let $Y_{\sigma}$ denote the mapping torus of $f_{\sigma}$. 
Note that the numbers of boundary components of $f_{111}$, $f_{12}$ and $f_{3}$ are 
$3$, $2$ and $1$, respectively. 
It is easy to see that if a simple polyhedron has a circle component in the singular set, 
its regular neighborhood is homeomorphic to either one of $Y_{111}$, $Y_{12}$ or $Y_3$. 

Let $X$ be a simple polyhedron with no true vertices. 
Since a connected component of $S(X)$ is homeomorphic to $S^1$, 
$X$ is decomposed into a finite number of $Y_{111}$, $Y_{12}$, $Y_3$, 
a $2$-disk $D$, a pair of pants $P$ and a M\"obius band $Y_2$. 
Such a decomposition of $X$ induces a graph consisting of vertices as shown in 
Figure~\ref{fig:encoding_graph} corresponding to the pieces in the decomposition or boundary components. 
An edge is associated to each circle along which $X$ is decomposed. 
The graph obtained in such a way is called an \textit{encoding graph} of $X$. 
% - - - - - - - - - - - - - - - - -
\begin{figure}[tbp]
\labellist
\footnotesize\hair 2pt
\pinlabel $(\mathrm{B})$         [B] at  13.23 8.12
\pinlabel $(\mathrm{Y}\!_{111})$ [B] at  62.83 8.12
\pinlabel $(\mathrm{Y}\!_{12})$  [B] at 112.44 8.12
\pinlabel $(\mathrm{Y}\!_{3})$   [B] at 147.87 8.12
\pinlabel $(\mathrm{D})$         [B] at 183.31 8.12
\pinlabel $(\mathrm{P})$         [B] at 232.91 8.12
\pinlabel $(\mathrm{Y}\!_{2})$   [B] at 282.52 8.12
\endlabellist
\centering
\includegraphics[width=.7\hsize]{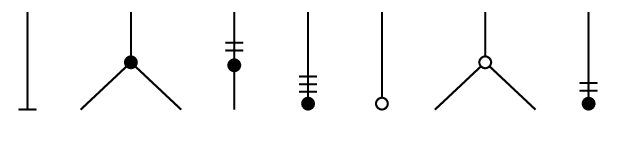}
\caption{Vertices of an encoding graph.}
\label{fig:encoding_graph}
\end{figure}
% - - - - - - - - - - - - - - - - -
See \cite{Mar11,KMN18} for more details. 

Let $G$ be an encoding graph of a simple polyhedron $X$ with no true vertices. 
As mentioned in \cite{Mar11,KMN18}, $X$ can not be recovered only by $G$ if $G$ has a cycle 
since the mapping class group of $S^1$ is $\Z/2\Z$. 
Actually, a pair of $G$ and a cocycle $\alpha\in H^1(G;\Z/2\Z)$ can determine $X$ (here we omit the details of how they do it). 

% In general, there exists an embedding $e:G\to X$ such that $e(G)$ is a retraction of $X$. 
% Moreover, we can assume that $e(G)\cap S(X)$ coincides with the images of 
% vertices of types $(\mathrm{Y}\!_{111})$, $(\mathrm{Y}\!_{12})$ and $(\mathrm{Y}\!_{3})$. 
% Especially, the images of edges of $G$ does not intersect $S(X)$. 
% Suppose $G$ has a non-trivial loop $\gamma$. 
% Then either one of an annulus or a M\"obius band can be embedded in $X$ along $e(\gamma)$. 
% In the former case, $\alpha([\gamma])$ must be $0$, and in the latter case $\alpha([\gamma])=1$, 
% which is the meaning of $\alpha$. 

% - - - - - - - - - - - - - - - - - - - - - - - - - - - - - - - - - - - - - - - - -
\subsection{Trisections}
\label{subsec:Trisections}
Here we review the notion of trisections of closed $4$-manifolds. 
\begin{definition}
\label{def:trisection}
Let $W$ be a closed $4$-manifold and $g,k_1,k_2,k_3$ non-negative integers with $\max\{k_1,k_2,k_3\}\leq g$. 
A \textit{$(g; k_1, k_2, k_3)$-trisection}, or simply a \textit{trisection}, of $W$ is a data of a decomposition of $W$ 
into three submanifolds $W_1,W_2$ and $W_3$ 
such that the following three conditions hold;
\begin{itemize}
 \item 
for $i\in\{1,2,3\}$, $W_i$ is diffeomorphic to ${k_i} (S^1\times B^3)$, 
 \item
for $i,j\in\{1,2,3\}$ with $i\ne j$, the intersection $H_{ij}=W_i\cap W_j$ 
is diffeomorphic to a genus $g$ $3$-dimensional handlebody $g (S^1\times D^2)$, and 
 \item 
the intersection $W_1\cap W_2\cap W_3$ is diffeomorphic to $\Sigma_g$. 
\end{itemize}
The surface $W_1\cap W_1\cap W_2$ is called the \textit{central surface} of the trisection. 
The \textit{genus} of a trisection is the genus of its central surface. 
\end{definition}
This notion was introduced by Gay and Kirby \cite{GK16}, and they showed the following by using a certain generic map from $4$-manifolds to the plane $\R^2$. 
\begin{theorem}[Gay and Kirby \cite{GK16}]
\label{thm:trisection}
Any closed $4$-manifold admits a trisection.
\end{theorem}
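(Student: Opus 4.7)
The plan is to prove the theorem by a handle-theoretic construction, which fits naturally with the shadow and Kirby-diagram framework used elsewhere in this paper. First, I would fix a self-indexed handle decomposition of $W$ having a unique $0$-handle, $k_1$ one-handles, some $2$-handles, $k_3$ three-handles, and a unique $4$-handle; such a decomposition always exists by basic Morse theory on closed $4$-manifolds. Setting $W_1$ to be the union of the $0$- and $1$-handles immediately gives $W_1\cong k_1(S^1\times B^3)$, and dually $W_3$ defined as the union of the $3$- and $4$-handles satisfies $W_3\cong k_3(S^1\times B^3)$. The difficulty is then to carve out $W_2$ from the middle level so that $W_2\cong k_2(S^1\times B^3)$ for some $k_2$, while all three pieces meet along a common genus $g$ surface.

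The key technical step is to put the $2$-handle attaching link into a convenient position. Concretely, $\partial W_1$ is $k_1(S^1\times S^2)$ and carries a standard genus-$k_1$ Heegaard surface; after stabilizing this to a genus $g$ Heegaard surface $\Sigma_g$ with $g\geq\max(k_1,k_3)$ sufficiently large, I would isotope the attaching circles of the $2$-handles into bridge position with respect to $\Sigma_g$. Moreover, by introducing canceling pairs of $1$- and $2$-handles, one can arrange that the $2$-handles split into two groups whose bridges lie on opposite sides of $\Sigma_g$ and such that each group turns its respective handlebody side into a genus-$g$ $3$-dimensional handlebody after attachment. Defining $W_2$ as the thickening of $\Sigma_g$ together with one of these $2$-handle groups, a direct combinatorial check shows that $W_2$ is diffeomorphic to $k_2(S^1\times B^3)$ for the appropriate $k_2$.

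Finally, each pairwise intersection $H_{ij}=W_i\cap W_j$ is identified with one of the three genus-$g$ handlebodies cut off by $\Sigma_g$ in the relevant boundary $3$-manifold, and the triple intersection $W_1\cap W_2\cap W_3$ is exactly $\Sigma_g$, which satisfies the conditions of Definition~\ref{def:trisection}. The main obstacle I expect is precisely the bridge-position and stabilization argument required to ensure that $W_2$ is itself a $1$-handlebody: one must simultaneously control the attaching circles of all $2$-handles relative to a single Heegaard surface while maintaining the handle-group split, which is the nontrivial combinatorial heart of the construction. An alternative route, closer to Gay and Kirby's original argument, is to start from a generic smooth map $f\colon W\to\R^2$, homotope it to a Morse $2$-function with only indefinite folds and cusps arranged symmetrically about the origin, and read off the trisection from the preimages of the three $120^\circ$ sectors; this is conceptually cleaner but less tied to the combinatorial methods employed later in the paper.
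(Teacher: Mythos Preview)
The paper does not provide its own proof of this theorem; it is cited from \cite{GK16} with the one-line remark that Gay and Kirby proved it ``by using a certain generic map from $4$-manifolds to the plane $\R^2$''---exactly the Morse $2$-function approach you mention at the end. What the paper \emph{does} spell out explicitly is the Meier--Zupan handle-theoretic construction in Subsection~\ref{subsec:hdl_dec_to_tris}, which your proposal is close to in spirit but departs from at the critical step.

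There is a genuine gap in your second paragraph. You propose to split the $2$-handles into two groups, with bridges on opposite sides of $\Sigma_g$, and to let $W_2$ be the thickening of $\Sigma_g$ together with \emph{one} of these groups. But then the other group of $2$-handles has nowhere to go: $W_1$ and $W_3$ are already the $0,1$-handle and $3,4$-handle pieces and cannot absorb any $2$-handles without losing their $1$-handlebody structure. All of $H^2$ must lie inside $W_2$. The construction reviewed in Subsection~\ref{subsec:hdl_dec_to_tris} uses neither bridge position nor any partition of the $2$-handles: one chooses an \emph{unknotting tunnel system} $\tau$ for the full attaching link $L\subset\partial(H^0\cup H^1)$, so that $\Sigma=\partial\Nbd(L\cup\tau;\partial(H^0\cup H^1))$ is a Heegaard surface, and then sets $W_2=H^2\cup\Nbd(L\cup\tau;W)$ while $W_1$ and $W_3$ are the complements of this neighborhood in $H^0\cup H^1$ and $H^3\cup H^4$ respectively. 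The piece $\Nbd(L\cup\tau;W)$ is a $4$-dimensional $1$-handlebody in which each component of $L$ is the core of a $1$-handle, so attaching all the $2$-handles along $L$ simply cancels those $1$-handles and leaves $W_2\cong k_2(S^1\times B^3)$ with $k_2$ the number of tunnels. Your instinct that the obstacle is ``controlling the attaching circles relative to a single Heegaard surface'' is correct, but the resolution is the tunnel system, not a bridge-position partition.
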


A \textit{trisection diagram} of the trisection $W_1\cup W_2\cup W_3$ is a 4-tuple $(\Sigma_g, \alpha, \beta, \gamma)$ 
such that $\Sigma_g$ is the central surface and that $\alpha$, $\beta$, and $\gamma$ are cut systems of $H_{31}$, $H_{12}$, and $H_{23}$, respectively. 
Here a \textit{cut system} of a $3$-dimensional handlebody $H$ 
is a collection of the boundaries of properly embedded disks in $H$ such that they cut $H$ open into a single $3$-ball. 
We note that $\partial W_i$ is decomposed into $H_{ij}\cup H_{ik}$ for $\{i, j, k\}=\{1, 2, 3\}$, 
which is a genus $g$ Heegaard splitting of $\partial X_i$ since $H_{ij}\cap H_{ik} = \partial H_{ij}=\partial H_{ik}$. 
Therefore, $(\Sigma_g, \alpha, \beta)$, $(\Sigma_g, \beta, \gamma)$ and $(\Sigma_g, \gamma, \alpha)$ are Heegaard diagrams of 
$\partial W_1$, $\partial W_2$ and $\partial W_3$, respectively. 
We also note that a trisection diagram reconstructs the corresponding $4$-manifolds and the trisection uniquely up to diffeomorphisms \cite{GK16}. 

We here define an operation called a \textit{stabilization} for a trisection diagram $(\Sigma_g, \alpha, \beta, \gamma)$. 
It is obtained by connected summing $(\Sigma_g, \alpha, \beta, \gamma)$ with either one of the diagrams shown in Figure~\ref{fig:destab}.
% - - - - - - - - - - - - - - - - -
\begin{figure}[tbp]
\labellist
\small\hair 2pt
\pinlabel {\textcolor[rgb]{1, 0, 0}{$\alpha$}}       [B]  at  28.35 10.73
\pinlabel {\textcolor[rgb]{0, 0, 1}{$\beta$}}        [B]  at  39.69 10.73
\pinlabel {\textcolor[rgb]{0, 0.50196, 0}{$\gamma$}} [Bl] at  51.01 37.08
\pinlabel {\textcolor[rgb]{0, 0, 1}{$\beta$}}        [B]  at  99.21 10.73
\pinlabel {\textcolor[rgb]{0, 0.50196, 0}{$\gamma$}} [B]  at 110.55 10.73
\pinlabel {\textcolor[rgb]{1, 0, 0}{$\alpha$}}       [Bl] at 121.87 37.08
\pinlabel {\textcolor[rgb]{0, 0.50196, 0}{$\gamma$}} [B]  at 170.08 10.73
\pinlabel {\textcolor[rgb]{1, 0, 0}{$\alpha$}}       [B]  at 181.42 10.73
\pinlabel {\textcolor[rgb]{0, 0, 1}{$\beta$}}        [Bl] at 192.74 37.08
\endlabellist
\centering
\includegraphics[width=.6\hsize]{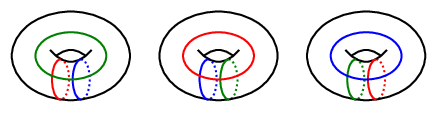}
\caption{A destabilization triple.}
\label{fig:destab}
\end{figure}
% - - - - - - - - - - - - - - - - -
By this operation, the corresponding $4$-manifold does not change up to diffeomorphisms, 
and the genus of the corresponding trisection increases by $1$. 
We also define an operation called a \textit{destabilization} as the inverse of a stabilization. 
Note that any two trisection diagrams of the same $4$-manifold are related by stabilizations, destabilizations and diffeomorphisms \cite{GK16,MSZ16}. 

Let $(\Sigma_g,\alpha,\beta,\gamma)$ be a trisection diagram. 
We note that each of $\alpha,\beta$ and $\gamma$ consists of $g$ mutually disjoint simple closed curves, 
so we will write $\alpha=\alpha_1\sqcup\cdots\sqcup\alpha_g,\ \beta=\beta_1\sqcup\cdots\sqcup\beta_g$, and $\gamma=\gamma_1\sqcup\cdots\sqcup\gamma_g$. 
Suppose that there exist $h,i,j\in\{1,\ldots,g\}$ such that 
\begin{itemize}
 \item 
% exactly one of $\alpha_h, \beta_i$ and $\gamma_j$ intersects each of the other two transversely once, and 
exactly two of $\alpha_h, \beta_i$ and $\gamma_j$ are parallel, and 
 \item 
% the other two are parallel. 
each of the parallel two curves intersects the other one transversely exactly once. 
\end{itemize}
We call such a triple $(\alpha_h, \beta_i, \gamma_j)$ a \textit{destabilization triple}. 
By handle sliding certain curves over $\alpha_h, \beta_i$ and $\gamma_j$ if necessary, 
we can assume that $\alpha_h, \beta_i$ and $\gamma_j$ do not intersect $\alpha\cup\beta\cup\gamma\setminus(\alpha_h\cup\beta_i\cup\gamma_j)$. 
Especially, the union of $\alpha_h, \beta_i$ and $\gamma_j$ is contained in a punctured torus after this modification, 
which allows the trisection diagram to be destabilized once. 
% Thus, if we just find a destabilization triple, then the trisection diagram once. 

We close this subsection with the definition of the trisection genus of closed $4$-manifolds. 
\begin{definition}
Let $W$ be a closed $4$-manifold. 
The \textit{trisection genus} $g(W)$ of $W$ is defined as the minimal genus of any trisection of $W$. 
\end{definition}
It is obvious that the trisection genus is an invariant of closed $4$-manifolds that takes a value in $\Z_{\geq0}$. 

\subsection{Handle decompositions to trisections}
\label{subsec:hdl_dec_to_tris}
Meier and Zupan showed the existence of a bridge trisection for any knotted surface 
by constructing a trisection from a handle decomposition of the ambient $4$-manifold \cite{MZ18}. 
Here we review their method to construct a trisection. 

Let $W$ be a closed $4$-manifold, 
and let us give a handle decomposition of $W$ such that each handle is attached to those with lower indices. 
Suppose that it has at least one $2$-handle and exactly one each of $0$-handle and $4$-handle. 
Let $H^i$ denote the union of all the $i$-handles 
and $L\subset \partial (H^0\cup H^1)$ the attaching link of the $2$-handles. 
Let $\tau$ be an unknotting tunnel for $L$ in $\partial(H^0\cup H^1)$, 
which means that $\partial \Nbd(L\cup\tau; \partial (H^0\cup H^1))$ gives a Heegaard splitting of 
$\partial (H^0\cup H^1)$. 
Set $\Sigma=\partial \Nbd(L\cup\tau; \partial (H^0\cup H^1))$. 
Then $W$ is trisected by 
\begin{align*}
W_1 &= (H^0\cup H^1) \setminus \Int \Nbd(L\cup\tau; W),\\
W_2 &= H^2 \cup \Nbd(L\cup\tau; W) \text{ and}\\
W_3 &= (H^3\cup H^4) \setminus \Int \Nbd(L\cup\tau; W) 
\end{align*}
with central surface $\Sigma$. 

% \Red
% The \textit{tunnel number} $t(L)$ is the minimum number of components of unknotting tunnel of a link $L$. 
% If $L$ is in the $3$-sphere, it is immediate to see that $t(L)$ is bounded by the crossing number $c(L)$ from above. 
% In the case where $L$ is in $k(S^1\times S^2)$ for some $k\in\Z_{\geq1}$, 
% describing $L$ as a diagram in 
% \Black

A trisection diagram for the trisection obtained above is given by letting 
$\alpha$, $\beta$ and $\gamma$ be 
cut systems of $\partial (H^0\cup H^1)\setminus \Int \Nbd(L\cup\tau; \partial (H^0\cup H^1))$, $\Nbd(L\cup\tau; \partial (H^0\cup H^1))$ and $\Nbd(L\cup\tau;\partial(H^3\cup H^4))$, respectively. 
More concretely, we can describe $\beta$ and $\gamma$ as follows. 
Let $\tau_1,\ldots,\tau_n$ be the connected components of $\tau$, 
and suppose that $L\cup(\tau_1\sqcup\cdots\sqcup\tau_{\ell-1})$ is connected, 
where $\ell$ is the number of components of $L$. 
We consider the framings of $L$ as a link $L'$ parallel to $L$, 
and we suppose that $L'$ lies on $\Sigma=\partial \Nbd(L\cup\tau; \partial (H^0\cup H^1))$. 
Then, $\beta$ is given as meridians of $L$ and those of $\tau_{\ell}\sqcup\cdots\sqcup\tau_n$, 
and $\gamma$ is given as $L'$ and meridians of $\tau_{\ell}\sqcup\cdots\sqcup\tau_n$. 

% - - - - - - - - - - - - - - - - -
\begin{figure}[tbp]
\labellist
\footnotesize\hair 2pt
\pinlabel {$0$}    [Br] at 251.49 158.74
\pinlabel {$2$}    [Bl] at  308.18 158.74
\pinlabel {$\tau$} [B]  at  315.27  92.13
\pinlabel {$0$}    [Br] at  38.89 158.74
\pinlabel {$2$}    [Bl] at   95.58 158.74
\pinlabel {(i)}    [Br] at  19.05 127.56
\pinlabel {(ii)}   [Br] at 231.65 127.56
\pinlabel {(iii)}  [Br] at 140.94  34.02
\endlabellist
\centering
\includegraphics[width=.9\hsize]{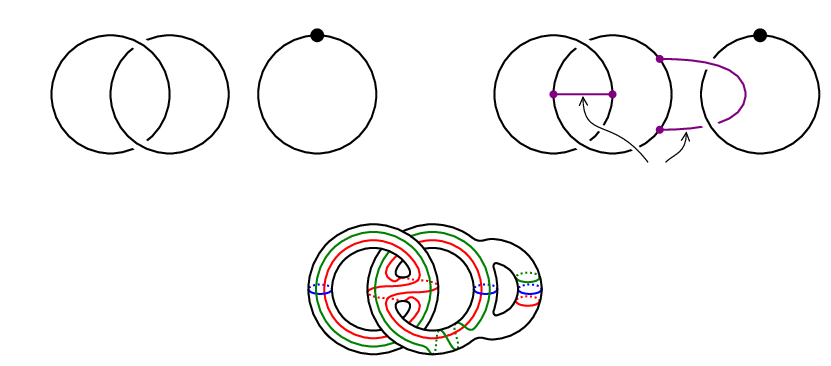}
\caption{%$A trisection obtained from a handle decomposition.
(i) A Kirby diagram of $(S^2 \times S^2) \# (S^1 \times S^3)$. (ii) An unknotting tunnel for the attaching link of the $2$-handles. (iii) A trisection diagram of $(S^2 \times S^2) \# (S^1 \times S^3)$. }
\label{fig:example_tris}
\end{figure}
% - - - - - - - - - - - - - - - - -
See Figure~\ref{fig:example_tris} for an example. 
The Kirby diagram depicted in (i) represents $(S^2\times S^2)\#(S^1\times S^3)$, 
where the attaching link $L$ is given as a Hopf link in $S^1\times S^2=\partial(S^1\times B^3)$. 
We can find an unknotting tunnel for $L$ such as in (ii). 
Then the trisection obtained from them in the way explained in this subsection is represented by the diagram shown in (iii).
%==================================================================================
\section{Cut systems and weighted complexity}
In this section, we introduce a new complexity called the \textit{weighted complexity} $c_r$, 
and by using it, we define the \textit{weighted shadow-complexity} $\shco_r$ of closed $4$-manifolds. 
After the definitions, we discuss some properties of $\shco_r$, especially, 
relationships with %already known complexities; 
the shadow-complexity and the special shadow-complexity. 

% - - - - - - - - - - - - - - - - - - - - - - - - - - - - - - - - - - - - - - - - -
Let $X$ be a simple polyhedron with $S(X)\ne\emptyset$. 
We define a \textit{cut system} for $X$ as a collection $\Gamma$ of mutually disjoint arcs % $\gamma_1,\ldots,\gamma_l$
embedded in $X$ such that 
\begin{itemize}
\item
each endpoint of the arcs lies in a triple line or $\partial X$, 
\item
the interiors of the arcs are contained in $X\setminus(S(X)\cup\partial X)$,
\item 
each component of $\partial X$ intersects exactly one arc, and
\item
each region with $\Gamma$ removed is simply connected.
\end{itemize}
Therefore, 
$\Gamma$ can be understood as a collection of cocores of $1$-handles of some handle decomposition of the regions. 
Note that $S(X) \cup\Gamma\cup \partial X$ is connected even if $S(X)$ is not connected. 
It is easy to see that the number of arcs of $\Gamma$ lying in a region $R$ is exactly $1-\chi(R)$. 
% For a simple polyhedron $X$ with $S(X)\cup\partial X=\emptyset$ (namely, a closed surface), 
% a \textit{cut system} for $X$ is defined as an embedded trivalent graph in $X$ whose complement in $X$ is an open disk. 

Recall that the \textit{complexity} of a simple polyhedron $X$ is defined as the number $c(X)$ of true vertices of $X$, 
which of course depends only on the shape of the singular set. 
We here introduce a new complexity to take into consideration the ``non-trivialities'' of regions. 
\begin{definition}
Fix a real number $r\geq0$. 
The \textit{$r$-weighted complexity} $c_r(X)$ of a simple polyhedron $X$ 
is defined as 
\[
c_r(X)=c(X)+\sum_{R:\text{region}}r(1-\chi(R)) 
\]
if $X$ is not a closed surface, and set $c_r(X)=0$ if $X$ is homeomorphic to $S^2$. 
The \textit{$r$-weighted shadow-complexity} $\shco_r(W)$ of a $4$-manifold $W$ is defined as 
the minimum of the $r$-weighted complexities over all shadows of $W$. 
\end{definition}
% One might wonder why $c_r$ is not defined for closed surfaces exccept for $S^2$. 
% The reason is in Lemma~\ref{lem:closed_surf}. 
We will show in Lemma~\ref{lem:closed_surf} that any closed surface except for $S^2$ can not be a shadow of any closed $4$-manifold, 
which is the reason why we do not define $c_r$ for closed surfaces except for $S^2$. 

Note that $c_0(X)=c(X)$ and $c_r(X)\leq c_{r'}(X)$ if $r<r'$. 
We show important relationships between the weighted shadow-complexity, 
the shadow-complexity and the special shadow-complexity. 
\begin{proposition}
\label{prop:rel_sh}
Let $W$ be a closed $4$-manifold and $r,r'\in\R$.  
\begin{enumerate}
 \item
If $0<r<r'$, then the following hold:
\[
\shco(W)\leq \shco_r(W)\leq \shco_{r'}(W)\leq\spshco(W). 
\]
 \item
$\shco(W)= \shco_0(W)$. 
 \item
$\shco_r(W)=\spshco(W)$ if $r\geq2$. 
\end{enumerate}

\end{proposition}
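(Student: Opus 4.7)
The plan is to address the three claims in order of increasing difficulty. Part (2) is immediate: on every simple polyhedron $X$ we have $c_0(X)=c(X)+0=c(X)$, so $\shco_0(W)=\shco(W)$.

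For part (1), I would first show that every region $R$ of any shadow $X\ne S^2$ of a closed $4$-manifold satisfies $\chi(R)\le 1$. Since $X$ is connected and is not a closed surface (the latter being excluded by the forthcoming lemma on closed-surface shadows), the closure of $R$ in $X$ must meet $S(X)$ nontrivially, so $R$ is the interior of a compact surface with nonempty boundary. Consequently $r(1-\chi(R))\ge 0$ for every region and every $r\ge 0$, whence $c(X)\le c_r(X)\le c_{r'}(X)$ for $0\le r\le r'$. The first two inequalities of (1) then follow by taking infima over all shadows of $W$. The final inequality $\shco_{r'}(W)\le\spshco(W)$ comes from the fact that on any special shadow every region is a disk, so $c_{r'}$ and $c$ agree on special shadows, and the infimum over this smaller class of polyhedra dominates $\shco_{r'}(W)$.

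For part (3), the monotonicity established in (1) already yields $\shco_2(W)\le\shco_r(W)\le\spshco(W)$ whenever $r\ge 2$, so it suffices to prove the reverse inequality $\spshco(W)\le\shco_2(W)$. Given any shadow $X$ of $W$ and a cut system $\Gamma$ for $X$, the key bookkeeping identity is $|\Gamma|=\sum_R (1-\chi(R))$, as noted just after the definition of cut systems. The plan is to process the arcs of $\Gamma$ one at a time, applying to each arc $a$ a local region-splitting shadow-move that (i) preserves the ambient $4$-manifold $W$, (ii) adds exactly two new true vertices (one at each endpoint of $a$), and (iii) reshapes the region containing $a$ so that the total $\sum_R(1-\chi(R))$ drops by exactly one. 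After performing all $|\Gamma|$ moves every region is a disk, so the resulting polyhedron $X'$ is a special shadow of $W$ satisfying
\[
c(X')=c(X)+2|\Gamma|=c(X)+2\sum_R(1-\chi(R))=c_2(X).
\]
Hence $\spshco(W)\le c_2(X)$, and minimizing over $X$ yields the desired inequality.

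The principal obstacle will be constructing the local region-splitting move with exactly this arithmetic: one must exhibit a simple-polyhedral modification in a neighborhood of each arc, together with a coherent adjustment of the gleams on the newly created or reshaped regions, so that the new polyhedron is still a shadow of the \emph{same} $4$-manifold $W$ while the true-vertex count grows by precisely two per arc and $\sum_R(1-\chi(R))$ decreases by one. Once this local move is in hand, the global tally reduces to a summation over $\Gamma$ and the proposition follows.
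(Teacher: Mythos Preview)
Your arguments for parts (1) and (2) are correct and mirror the paper. For part (3), your strategy---turn an arbitrary shadow into a special one by performing a local move along each arc of a cut system, gaining two true vertices per arc---is exactly what the paper does in the case $S(X)\ne\emptyset$. The paper realizes the move concretely: when both endpoints of the arc lie on $S(X)$ it is the standard $(0\to2)$-move, while when one endpoint lies on $\partial X$ it is a three-step composite (a $(0\to1)$-move, a $(0\to2)$-move, then a collapse) that still nets exactly two true vertices. You should anticipate this dichotomy, since the local picture near a boundary endpoint differs from that near a triple line; your parenthetical ``one at each endpoint of $a$'' is not quite how the vertices actually appear.

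There is, however, a genuine gap. Cut systems are defined only for simple polyhedra with $S(X)\ne\emptyset$, and your argument never treats shadows with empty singular set. Such a shadow is a surface; after excluding closed surfaces of positive genus it is either $S^2$ (where $W\in\{S^4,\CP,\mCP\}$ and $\spshco(W)=0=c_r(S^2)$ is immediate) or a compact surface with nonempty boundary. In the latter case $W\cong k(S^1\times S^3)$ with $k=1-\chi(X)$ and $c_r(X)=rk$, so one still needs $\spshco(k(S^1\times S^3))\le 2k$. The paper closes this by invoking the separate, nontrivial computation $\spshco(k(S^1\times S^3))=k+1$ for $k\ge1$. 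Your local-move machinery cannot be applied here (there are no triple lines on which to anchor the moves), so without this additional input the inequality $\spshco(W)\le\shco_2(W)$ remains unproven for manifolds whose $2$-weighted shadow-complexity might be realized by a surface shadow.
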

\begin{proof}
(1) Obviously, $c(X)\leq c_r(X)$ for a simple polyhedron $X$, 
and hence the first inequality $\shco(W)\leq \shco_r(W)$ holds. 
If a simple polyhedron $X$ is special, then $c(X) = c_r(X)$. 
Therefore, $\shco_r(W)\leq\spshco(W)$ holds. \\[2mm]
\noindent (2) 
It is obvious from the definition of $r$-weighted complexity. \\[2mm]
\noindent (3)
% Suppose $s\geq2$, and we prove $\shco_r(W)=\spshco(W)$. 
Let $X$ be a shadow of $W$. 
It is enough to check that $\spshco(W)\leq c_r(X)$. 
We first consider the case $S(X)=\emptyset$. 
We will show in Lemma~\ref{lem:closed_surf} 
that a closed surface of non-zero genus can not be a shadow of any closed $4$-manifold. 
Thus, $X$ must be homeomorphic to $S^2$ or has non-empty boundary. 
If $X$ is homeomorphic to $S^2$, then $W$ is diffeomorphic to $S^4$, $\CP$ or $\mCP$. 
Then $\spshco(W)=0< 2\leq r=c_r(X)$ holds. 
If $X$ has non-empty boundary, 
then $W$ is diffeomorphic to $k(S^1\times S^3)$, where $k=1-\chi(X)=\frac{c_r(X)}r$. 
% \sout{By the assumption, $k=0$, that is, $W$ is $S^4$, 
% and $\spshco(W)\leq c_r(X)$ also holds.} 
If $k=0$, that is, $W$ is $S^4$, 
then $\spshco(W)\leq c_r(X)$ also holds. 
Suppose $k\geq1$. 
As shown in \cite{Nao23}, the special shadow-complexity of $k(S^1\times S^3)$ is equal to $k+1$. 
Thus, we also have $\spshco(W)=k+1\leq 2k\leq rk=c_r(X)$. 

We next consider the case $S(X)\ne\emptyset$. 
Let $\Gamma$ be a cut system for $X$. 
Recall that $\sum_{R:\text{region}}r(1-\chi(R))$ 
is equal to the number of arcs of $\Gamma$. 
Let $e$ be one of arcs of $\Gamma$. 
Then $\Nbd(e;X)$ is shown in the leftmost part of Figure~\ref{fig:specialization}-(i) if 
both of the endpoints of $e$ lie in $S(X)$, 
and otherwise $\Nbd(e\cup C;X)$ is shown in the leftmost part of Figure~\ref{fig:specialization}-(ii), 
where $C$ is the boundary component of $X$ containing an endpoint of $e$. 
The move shown in Figure~\ref{fig:specialization}-(i) is called a \textit{$(0\to2)$-move} (cf. \cite{Tur94,Cos05}), 
which creates two true vertices and decrease the number of arcs of $\Gamma$ by $1$. 
Figure~\ref{fig:specialization}-(ii) shows the composition of three moves. 
The first move (ii-1) is a \textit{$(0\to1)$-move} (cf. \cite{Tur94,Cos05}), 
and the second move (ii-2) is a $(0\to2)$-move. 
By these two moves, three true vertices and one annular boundary region are created. 
The move (ii-3) is a collapsing so that the annular boundary region is removed. 
By this collapsing, one true vertex is removed. 
The move (ii) that is the composition of (ii-1), (ii-2) and (ii-3) 
changes the simple polyhedron so that two true vertices are created 
and decrease the number of arcs of $\Gamma$ by $1$. 
We apply a move (i) or a move (ii) for every arc of $\Gamma$, 
so that we obtain a special polyhedron $X'$ with 
$c(X')=c(X)+2\sum_{R:\text{region}}(1-\chi(R))=c_2(X)\leq c_r(X)$. 
Therefore, we have $\spshco(W)\leq c_r(X)$. 
\end{proof}
% - - - - - - - - - - - - - - - - -
\begin{figure}[tbp]
\labellist
\footnotesize\hair 2pt
\pinlabel {\textcolor[rgb]{0, 0.50196, 0.50196}{$\partial X$}} [Bl] at  164.72 215.66
\pinlabel $1/2$    [B]  at  340.47 225.58
\pinlabel $1/2$    [B]  at  306.45 225.58
\pinlabel -$1$     [B]  at  263.93 209.99
\pinlabel $1/2$    [B]  at  280.94 48.42
\pinlabel $0$      [B]  at  269.60 72.51
\pinlabel $1/2$    [B]  at  235.59 69.68
\pinlabel -$1$     [B]  at  193.07 54.09
\pinlabel glue     [Bl] at   91.02 164.64
\pinlabel glue     [Bl] at  303.62 164.64
\pinlabel glue     [Bl] at  232.75 8.73
\pinlabel glue     [Bl] at   91.02 306.37
\pinlabel glue     [Bl] at  346.14 306.37
\pinlabel glue     [Br] at  176.06 425.42

\pinlabel glue     [Br] at  431.18 425.42
\pinlabel $0$      [B]  at  391.49 367.31

\pinlabel (i)      [Br] at  22.99 365.90
\pinlabel (ii)     [Br] at  22.99 224.16
\pinlabel {arc $e$ of $\Gamma$} [Bl] at 150.55 297.86

\pinlabel $0$      [B]  at  482.20 72.51
\pinlabel $1/2$    [B]  at  448.19 69.68
\pinlabel -$1/2$    [B] at 493.54 48.42

\pinlabel glue     [Bl] at  445.35 8.73
\pinlabel (ii-1)   [B] at  221.41 238.34
\pinlabel (ii-2)   [Bl] at  320.63 136.29
\pinlabel (ii-3)   [B] at  363.15 82.43
\endlabellist
\centering
\includegraphics[width=1\hsize]{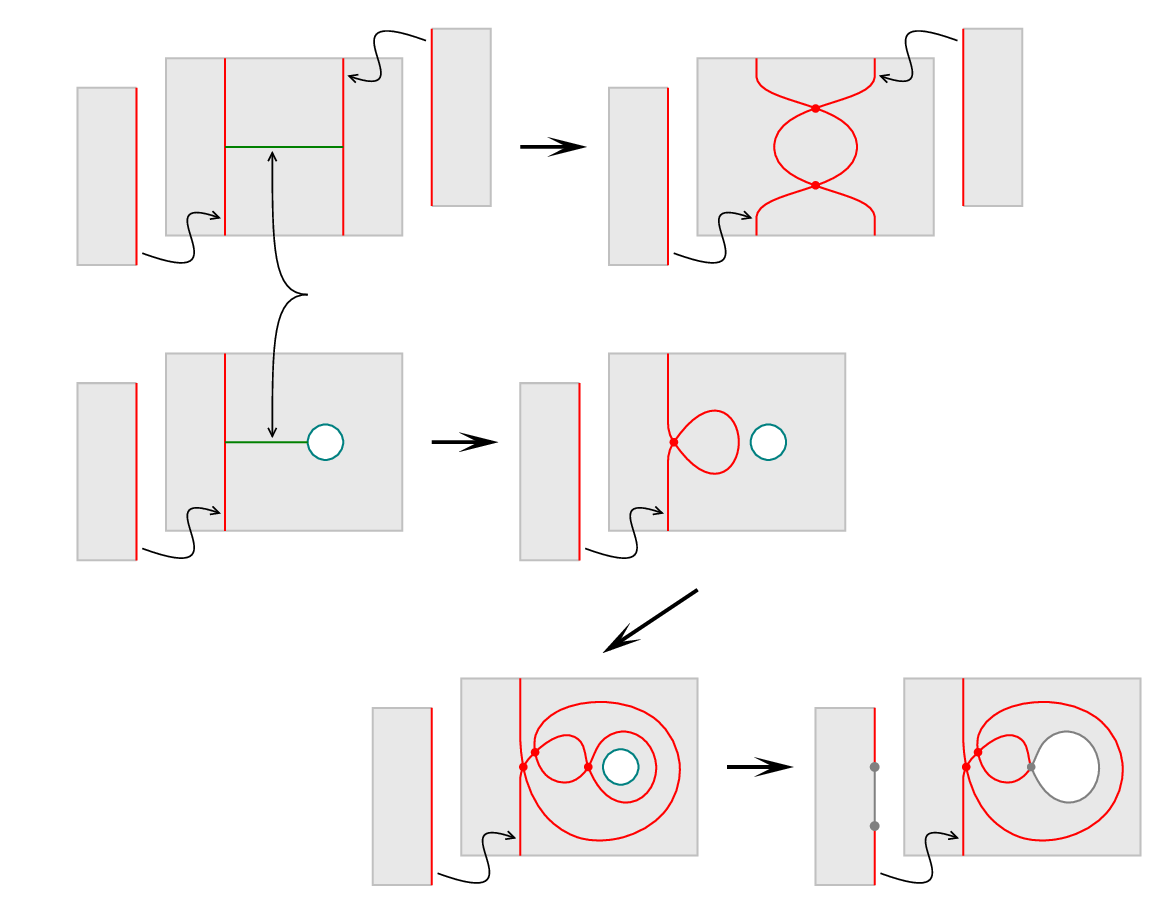}
\caption{Modification of a simple polyhedron into a special polyhedron.}
\label{fig:specialization}
\end{figure}
% - - - - - - - - - - - - - - - - -

Let $X$ and $X'$ be shadows of closed $4$-manifolds $W$ and $W'$, respectively. 
We choose small disks $D$ and $D'$ in regions of $X$ and $X'$, respectively. 
Identifying $D$ and $D'$, we obtain a new simple polyhedron $X''$. 
This polyhedron $X''$ is a shadow of the $4$-manifold $W\# W'$. 
By this operation, the summation of the Euler characteristics of the regions decreases by $2$, so we have the following. 
\begin{proposition}
\label{prop:connected_sum_ineq}
For any closed $4$-manifolds $W$ and $W'$, 
$\shco_r(W\# W')\leq\shco_r(W)+\shco_r(W')+2r$. 
\end{proposition}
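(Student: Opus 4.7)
The plan is to follow the explicit construction of a shadow of $W\#W'$ sketched in the paragraph preceding the statement, and to bound its weighted complexity. First I choose shadows $X$ of $W$ and $X'$ of $W'$ attaining the minima, so that $c_r(X)=\shco_r(W)$ and $c_r(X')=\shco_r(W')$; such minimizers exist because $c_r$ takes values in the well-ordered set $\{m+rn\mid m,n\in\Z_{\geq0}\}$. Then I pick small disks $D$ and $D'$ in the interiors of regions $R$ of $X$ and $R'$ of $X'$, chosen away from $S(X)\cup\partial X$ and $S(X')\cup\partial X'$, and form the quotient $X''=(X\sqcup X')/\sim$ by identifying $D$ with $D'$ via a homeomorphism.

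Next I verify that $X''$ is a simple polyhedron and a shadow of $W\# W'$. The only new local structure lies along $\partial D=\partial D'$, where three half-disks meet along an arc---the residues of $R$, $R'$, and the common identified disk---matching local model~(ii) of Figure~\ref{fig:local_model}; hence $\partial D=\partial D'$ becomes a new triple circle and no true vertex is created. On the manifold side, $\Nbd(D;W)$ is a smooth $4$-ball in $W$ intersecting $X$ exactly in $D$, and similarly for $D'$; performing the connected sum of $W$ and $W'$ along these two $4$-balls, matched so as to identify $\partial D$ with $\partial D'$, produces a closed $4$-manifold in which $X''$ sits locally-flatly, and the complement of $\Nbd(X'';W\# W')$ is the boundary connected sum of the complements of $\Nbd(X;W)$ and $\Nbd(X';W')$, hence remains of the form $k(S^1\times B^3)$.

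Finally I compute $c_r(X'')$. The identification creates no new true vertex, so $c(X'')=c(X)+c(X')$. The regions of $X''$ consist of all regions of $X$ and $X'$ other than $R$ and $R'$, together with three new ones: $R\setminus\Int D$, $R'\setminus\Int D'$, and the identified disk $D=D'$, with Euler characteristics $\chi(R)-1$, $\chi(R')-1$, and $1$, respectively. Their combined contribution $(2-\chi(R))+(2-\chi(R'))+0$ to $\sum_Q (1-\chi(Q))$ exceeds the original $(1-\chi(R))+(1-\chi(R'))$ by exactly $2$, so $c_r(X'')=c_r(X)+c_r(X')+2r$, yielding $\shco_r(W\# W')\leq\shco_r(W)+\shco_r(W')+2r$. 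The main obstacle is the shadow verification in the second paragraph: one must check that the $4$-dimensional thickening of $X''$ really is the boundary connected sum of the thickenings of $X$ and $X'$, so that closing up by $S^1\times B^3$ pieces on the boundary recovers $W\# W'$ rather than some other closed $4$-manifold.
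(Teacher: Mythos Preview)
Your proposal is correct and follows exactly the construction the paper sketches in the paragraph preceding the proposition; you have simply filled in the details the paper omits. One small caveat: your displayed equality $c_r(X'')=c_r(X)+c_r(X')+2r$ tacitly assumes neither minimizer is homeomorphic to $S^2$, since in that exceptional case $c_r$ is defined to be $0$ rather than by the region formula; there the equality becomes strict, but the desired inequality $\shco_r(W\#W')\leq\shco_r(W)+\shco_r(W')+2r$ still follows.
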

% As a consequence of Theorems~\ref{thm:complexity0} and \ref{thm:complexity1/2}, 
% we have $\shco_{1/2}(4\CP)=1$. 
% Therefore, the inequality in Proposition~\ref{prop:connected_sum_ineq} is the best possible. 
% \Erase{More generally, we have $\shco_{1/2}(k\CP)=(k-2)/2$ for any integer $k\geq2$ (Proposition~\ref{prop:Examples}). }
As shown in Proposition~\ref{prop:Examples}, we have $\shco_{1/2}(2\CP)=0$ and $\shco_{1/2}(4\CP)=1$. 
These give an example satisfying the equality $\shco_r(W\# W')=\shco_r(W)+\shco_r(W')+2r$ as
$r=1/2,\ W=W'=2\CP$.

We then discuss the finiteness of the complexities. 
There exist infinitely many closed $4$-manifolds with shadow-complexity $0$. 
For example, $\shco(k\CP)=0$ for any $k\in\Z$ (cf. \cite{Mar11} and Proposition~\ref{prop:Examples}). 
Thus, the shadow-complexity for closed $4$-manifold is not finite-to-one. 
On the other hand, the special shadow-complexity is finite-to-one \cite{Cos06b,Mar05}. 
We here show that the weighted shadow-complexity is also finite-to-one. 
\begin{proposition}
\label{prop:finite_to_one}
% For each non-negative half-integer $r$, 
% there exists a finite number of closed $4$-manifolds 
% having cut shadow-complexity less than or equal to $r$.
For any positive number $r$ and any non-negative number $a$, 
there exists a finite number of closed $4$-manifolds 
having $r$-weighted shadow-complexity less than or equal to $a$. 
\end{proposition}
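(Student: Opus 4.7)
The plan is to reduce the statement to the finite-to-one property of the special shadow-complexity $\spshco$, recalled immediately before Proposition~\ref{prop:finite_to_one}. I would show that the hypothesis $\shco_{r}(W)\leq a$ forces $\spshco(W)$ to lie in a range bounded in terms of $a$ and $r$ only; since only finitely many closed $4$-manifolds have $\spshco$ below any given bound, this is enough.

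First I would fix a shadow $X$ of $W$ realising $c_{r}(X)\leq a$ and set $s=\sum_{R}(1-\chi(R))$, so that $c_{r}(X)=c(X)+rs$. When $S(X)\ne\emptyset$ every region satisfies $1-\chi(R)\geq 0$, and the degenerate case $X\cong S^{2}$ is handled by convention, so both summands on the right are non-negative. Hence the single inequality $c(X)+rs\leq a$ yields the two basic estimates $c(X)\leq a$ and $s\leq a/r$ simultaneously; this is precisely where the hypothesis $r>0$ is used.

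Next I would replay the specialization argument from the proof of Proposition~\ref{prop:rel_sh}(3). When $S(X)\ne\emptyset$, any cut system $\Gamma$ for $X$ contains exactly $s$ arcs, and applying to each arc the moves (i) or (ii) of Figure~\ref{fig:specialization} produces a special shadow $X'$ of the same $4$-manifold $W$ with $c(X')=c(X)+2s\leq a+2a/r$. When $S(X)=\emptyset$, the same proof identifies $W$ with one of $S^{4}$, $\pm\CP$, or $k(S^{1}\times S^{3})$ for $k\leq a/r$, and $\spshco(W)$ is again bounded in terms of $a$ and $r$.

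Consequently $\spshco(W)$ is bounded above by a constant depending only on $a$ and $r$, and the finite-to-one property of $\spshco$ then finishes the argument. There is no genuine geometric obstacle: the argument is essentially bookkeeping on top of the specialization procedure already developed earlier in Section~3, combined with the known finiteness of $\spshco$. The only delicate point is balancing the bounds on $c(X)$ and $s$ when $r<2$, which is precisely why the hypothesis $r>0$ is required and also explains why $\shco=\shco_{0}$ fails to be finite-to-one.
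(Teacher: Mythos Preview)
Your argument is correct, but it takes a different route from the paper's. The paper argues directly: the set $\{m+rn\mid m,n\in\Z_{\geq0}\}\cap[0,a]$ is finite, for each value $a_{0}$ in it there are finitely many decompositions $a_{0}=m_{0}+rn_{0}$, for each such pair there are only finitely many simple polyhedra with $m_{0}$ true vertices and $\sum_{R}(1-\chi(R))=n_{0}$, and finally Martelli's theorem \cite[Theorem~2.4]{Mar05} bounds the number of closed $4$-manifolds admitting a fixed polyhedron as shadow. No specialization moves are invoked.

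Your reduction instead recycles the specialization procedure from Proposition~\ref{prop:rel_sh}(3) to bound $\spshco(W)$ by $a+2a/r$ (with the surface case handled separately), and then appeals to the finite-to-one property of $\spshco$ stated just before the proposition. This is perfectly valid and arguably more economical, since it reuses machinery already in place rather than appealing to Martelli's result afresh; on the other hand, the paper's approach is more self-contained at this point and makes the combinatorial source of finiteness (bounded $(m,n)$, hence bounded polyhedra) more transparent. Both ultimately rest on the same Martelli finiteness theorem, just packaged differently.
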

\begin{proof}
Fix $r>0$ and $a\geq0$. 
Note that the $r$-weighted complexity $c_r$ takes a value in 
$\{m+rn\mid m,n\in\Z_{\geq0}\}$. 
The set $\{m+rn\mid m,n\in\Z_{\geq0}\}\cap[0,a]$ is a finite set, 
in which we pick arbitrary $a_0$. 
The number of ways to present $a_0$ in a form $m+rn$ is finite, 
so fix $m_0,n_0\in\Z_{\geq0}$ with $a_0=m_0+rn_0$. 
It is easy to check that the number of simple polyhedra 
with $m_0$ true vertices and $\sum_{R:\text{region}}(1-\chi(R))=n_0$ is finite. 
By Martelli's result \cite[Theorem 2.4]{Mar05}, 
the number of closed $4$-manifolds admitting a shadow homeomorphic to a fixed simple polyhedron is finite. 
Therefore, the proposition holds. 
\end{proof}

%==================================================================================
\section{Kirby diagrams and trisections from shadows}
% - - - - - - - - - - - - - - - - - - - - - - - - - - - - - - - - - - - - - - - - -
In this section, we explain how one can draw a Kirby diagram of a 
$4$-manifold $W$ from a given shadow of $W$. 
We refer the reader to \cite{KN20} for the case of special shadows, 
and we stress that 
% we now consider shadows whose boundary may be non-empty and the regions may be disks. 
shadows we will consider can have non-empty boundary and non-disk regions. 
We also give the proof of Theorem~\ref{thm:complexity_genus} at the end of the section. 
\subsection{Shadows to Kirby diagrams}
Let $X$ be a shadow of a $4$-manifold $W$ with $S(X)\ne \emptyset$ 
and $\Gamma$ a cut system for $X$. 
Set $\tilde\Gamma = S(X)\cup\Gamma\cup\partial X$, which will be regarded as a graph naturally. 
% fix a maximal tree $T$ of $\tilde\Gamma$ 
% so that $T$ does not have an edge whose endpoints lie in the same triple line of $X$. 
% In other words, 
% if a triple line is subdivided into some edges in the graph $\tilde\Gamma$, 
% then $T$ must have all such edges except one. 
Let $T_0$ be a forest each of whose connected component is a spanning tree of a connected component of $S(X)$ as a subgraph of $\tilde\Gamma$. 
Then let $T$ be a spanning tree of $\tilde\Gamma$ obtained from $T_0$ by adding some edges of $\tilde\Gamma$. 

Set $X_{\tilde\Gamma}=\Nbd(\tilde\Gamma;X)$. 
The number of connected components of 
$\partial X_{\tilde\Gamma}\setminus\partial X$ is the same as that of the regions of $X$, 
and $X$ is obtained from $X_{\tilde\Gamma}$ by capping $\partial X_{\tilde\Gamma}\setminus\partial X$ off by $2$-disks. 
Especially, $X\setminus\tilde\Gamma$ is the disjoint union of some open $2$-deisks, 
which gives a cell decomposition of $X$. 

We consider an immersion $\varphi:X_{\tilde\Gamma}\to S^3$ 
such that 
\begin{itemize}
 \item 
$\varphi|_{\tilde\Gamma}$ is an embedding,
 \item 
% the image of $\varphi$ is contained in $\Nbd(\varphi(S(X_\Gamma)\cup\Gamma\cup\partial_1 X_\Gamma);S^3)$, and 
$\varphi(X_{\tilde\Gamma})\subset \Nbd(\varphi(\tilde\Gamma);S^3)$, and
 \item 
$\varphi$ is an embedding except on the neighborhood of some triple lines. 
As shown in Figure~\ref{fig:singularity}, 
the image of non-injective points of $\varphi$ form intervals, 
and a neighborhood of each of them is homeomorphic to the union of 
% $\{(x,y,t)\mid x=0,0\leq y\leq1, |t|\leq\pi\}$, 
% $\{(x,y,t)\mid x=-2\sin t/3, y=-2\cos t/3, |t|\leq\pi\}$ and 
% $\{(x,y,t)\mid x=  \sin t/3, y=- \cos t/3, |t|\leq\pi\}$
\begin{align*}
&\{(z,t)\in\C\times\R\mid -1\leq z\leq0, -1\leq t\leq1\}, \\
&\{(z,t)\in\C\times\R\mid z=2re^{\frac{\pi t}{3}\sqrt{-1}}, -1\leq t\leq1,0\leq r\leq1\}, 
\text{ and} \\
&\{(z,t)\in\C\times\R\mid z=re^{-\frac{\pi t}{3}\sqrt{-1}}, -1\leq t\leq1,0\leq r\leq1\}, 
\end{align*}
where we identify $\C\times \R=\R^3$.
\end{itemize}
% Some local pictures of the image of $\varphi$ are shown in the left parts of Figure~\ref{fig:Kirby_diag}-(i) to -(vi). 
% - - - - - - - - - - - - - - - - -
\begin{figure}[tbp]
\centering
\includegraphics[width=.4\hsize]{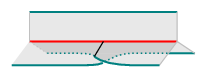}
\caption{The non-injective part of $\varphi$. }
\label{fig:singularity}
\end{figure}
% - - - - - - - - - - - - - - - - -
% - - - - - - - - - - - - - - - - -
% \begin{figure}[tbp]
% \labellist
% \footnotesize\hair 2pt
% \pinlabel {$\varphi(S(X_\Gamma)\setminus T)$} [Br] at 372.76 191.57
% \pinlabel {$\varphi(\Gamma\setminus T)$} [Br] at 367.09 107.94
% \pinlabel {$\varphi(\partial X_\Gamma )$} [Bl] at 403.94  66.84
% \pinlabel {$\varphi(\partial X_\Gamma \setminus \partial X)$} [lB] at 396.85  18.65
% \pinlabel $\tau$ [Bl] at 600.94 212.83
% \pinlabel $\tau$ [Bl] at 600.94 129.20
% \pinlabel (i) [Br] at  26.93 236.92
% \pinlabel (ii) [Br] at  26.93 151.88
% \pinlabel (iii) [Br] at  26.93  66.84
% \pinlabel (vi) [Br] at 372.76  66.84
% \pinlabel (v) [Br] at 372.76 151.88
% \pinlabel (iv) [Br] at 372.76 236.92
% \pinlabel {singularities of $\varphi$} [Bl] at  55.28   8.73
% \endlabellist
% \centering
% \includegraphics[width=1\hsize]{Kirby_diag}
% \caption{Each of the left part shows a local pictures of the images of $\varphi$: 
% (i) a neighborhood of a true vertex, 
% (ii) a neighborhood of a triple line embedded by $\varphi$, 
% (iii) a neighborhood of a triple line immersed by $\varphi$, 
% (iv) a neighborhood of a triple line not intersecting $T$, 
% (v) a neighborhood of a part of $\Gamma$ not intersecting $T$, and 
% (vi) a neighborhood of a part of $\partial_1 X_\Gamma$ not intersecting $T$. 
% Each of the left part shows the corresponding part of the Kirby diagram. }
% \label{fig:Kirby_diag}
% \end{figure}
% - - - - - - - - - - - - - - - - -
% - - - - - - - - - - - - - - - - -
\begin{figure}[tbp]
\labellist
\footnotesize\hair 2pt
\pinlabel (i) [B] at  82.20 8.73
\pinlabel (ii) [B] at  238.11 8.73
\pinlabel (iii) [B] at  394.02 8.73
\pinlabel $\Gamma$ [B] at 93.54 96.60
\pinlabel $\partial X$ [B] at 143.15 81.01
\pinlabel $S(X)$ [B] at 15.34 96.77
\endlabellist
\centering
\includegraphics[width=1\hsize]{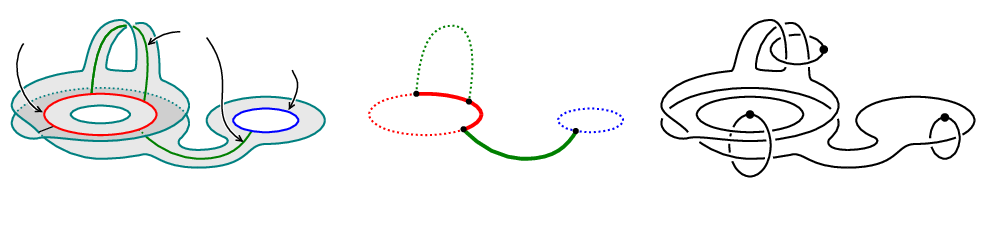}
\caption{An example of how to draw a Kirby diagram. 
(i) The image of $X_{\tilde\Gamma}$ by $\varphi$. 
(ii) The tree graph $T$. 
(iii) The Kirby diagram of a $4$-dimensional thickening of $X$. }
\label{fig:example}
\end{figure}
% - - - - - - - - - - - - - - - - -
See Figure~\ref{fig:example}-(i) for an example of the image of $X_{\tilde\Gamma}$ by $\varphi$. 
The simple polyhedron $X$ we use in this example is encoded by the graph 
\begin{minipage}[c]{17mm}
\includegraphics[width=1\hsize]{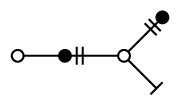}
\end{minipage}.
% (see Subsection~\ref{subsec:Encoding_graph} for the definition of encoding graphs). 

We then encircle each arc in $\varphi(\tilde\Gamma\setminus T)$ 
by a dotted circle so that it does not touch $\varphi(X_\Gamma)$. 
See Figure~\ref{fig:example} for an example. 
If $T$ is chosen as shown in Figure~\ref{fig:example}-(ii), 
we provide dotted circles as shown in Figure~\ref{fig:example}-(iii). 

Let $L_1$ denote the link consisting of those dotted circles, 
and set $L_2=\varphi(\partial X_{\tilde\Gamma}\setminus\partial X)$. 
A Kirby diagram of $W$ that we require 
consists of the dotted circles $L_1$ and the link $L_2$ equipped with some framings. 
See Figure~\ref{fig:example}-(iii). 
% where the framed link $L_2$ is the attaching link of $2$-handles. 
Note that the framings of $L_2$ are determined by the gleam of $X$ and $\varphi$, 
but here we omit the details of those calculation. 
% Although this Kirby diagram is detemied not only by $X$ and $\Gamma$ but also by $i$, 
% we simply call the diagram a \textit{Kirby diagram obtained from} $X$. 
% \begin{remark}
% The second condition in the definition of $\varphi$ is not actually necessary. 
% This is just for convenience for the proof of Lemma~\ref{lem:Heegaard_surf}. 
% \end{remark}
\begin{remark}
In the case $S(X)= \emptyset$, 
$X$ is a (possibly non-orientable) compact surface. 
Then a $4$-dimensional thickening of $X$ is a disk bundle over $X$, 
whose Kirby diagram is easily drawn (see \cite{GS99} for instance). 
\end{remark}
% - - - - - - - - - - - - - - - - - - - - - - - - - - - - - - - - - - - - - - - - -
\subsection{Kirby diagrams to trisections}
\label{subsec:Kirby_diagrams_to_trisections}
% Let $X$ be a shadow of a $4$-manifold $W$ and $\Gamma$ a cut system for $X$. 
% Let $(L_1,L_2)$ be a Kirby diagram obtained as done in the previous subsection. 
%
% Recall that an \textit{unknotting tunnel} of a link $L$ in a $3$-manifold $N$ 
% is a collection $\tau$ of mutually disjoint arcs embedded in $N$ such that 
% $\tau\cap L$ coincides with the disjoint union of the endpoints of $\tau$ 
% and that $\partial\Nbd(L\cup T;N)$ is a Heegaard surface of $N$. 
%
For convenience of constructing a trisection, 
we start with modifying the immersion $\varphi:X_{\tilde\Gamma}\to S^3$. 

% We regard $\tilde\Gamma$ as a graph naturally, 
Let $v_1,\ldots,v_n$ be the vertices of $\tilde\Gamma$ as a graph. 
They are also vertices of the tree graph $T$, 
and let $e_1,\ldots,e_{n-1}$ be the edges of $T$. 
Let $n'$ be the number of edges of $\tilde\Gamma\setminus T$, 
which coincides with $c_1(X)+1$. %as we will see in the proof of Lemma~\ref{lem:genus_of_Sigma}. 
Let $e^\ast_1,\ldots,e^\ast_{n'}$ denote the edges of $\tilde\Gamma\setminus T$. 
We regard $X_{\tilde\Gamma}$ as being decomposed into 
\[
V_1, \ldots,V_n,\quad E_1,\ldots, E_n,\quad E^\ast_1, \ldots , E^\ast_{n'}, 
\]
where 
\begin{align*}
V_i&=\Nbd(v_i;X),\\
% E_j&=\Nbd(e_i;X)\setminus\bigcup_{k=1}^n\Int V_k, \\
E_j&=\Nbd(e_j;X)\setminus\Int (V_1\cup\cdots\cup V_n), \\
% E_\bullet&=\Nbd(\tilde\Gamma\setminus T;X)\setminus\bigcup_{k=1}^n\Int V_k 
E^\ast_k&=\Nbd(e^\ast_k;X)\setminus\Int (V_1\cup\cdots\cup V_n)
\end{align*}
for $i\in\{1,\ldots,n\}$, $j\in\{1,\ldots,n-1\}$ and $k\in\{1,\ldots,n'\}$. 
% \[
% V_i=\Nbd(v_i;X),\quad
% E_i=\Nbd(e_i;X)\setminus\bigcup_{j=1}^n\Int V_j,\quad
% E_\bullet=\Nbd(\tilde\Gamma\setminus T;X)\setminus\bigcup_{j=1}^n\Int V_j. 
% \]
Note that $v_i$ is either a true vertex of $X$ or an endpoint of $\Gamma$. 
If $v_i$ is a true vertex, $V_i$ is as shown in Figure~\ref{fig:VE}-(i). 
If $v_i$ is an endpoint of $\Gamma$ and is on a triple line of $X$, 
$V_i$ is as shown in Figure~\ref{fig:VE}-(ii). 
If $v_i$ is an endpoint of $\Gamma$ and is on $\partial X$, 
$V_i$ is as shown in Figure~\ref{fig:VE}-(iii). 
The portion $E_i$ is shown in Figure~\ref{fig:VE}-(iv) if $e_i\subset S(X)$, 
and it is shown in Figure~\ref{fig:VE}-(v) if $e_i\subset \Gamma$. 
An edge $e^\ast_k$ is contained in either $S(X)$, $\Gamma$ or $\partial X$, 
and hence $E^\ast_k$ is as shown in Figure~\ref{fig:VE}-(iv), (v) or -(vi). 
% - - - - - - - - - - - - - - - - -
\begin{figure}[tbp]
\labellist
\footnotesize\hair 2pt
\pinlabel (i)   [B] at  56.69 82.43
\pinlabel (ii)  [B] at 184.25 82.43
\pinlabel (iii) [B] at 311.81 82.43
\pinlabel (iv)  [B] at  56.69 8.73
\pinlabel (v)   [B] at 184.25 8.73
\pinlabel (vi)  [B] at 311.81 8.73
\endlabellist
\centering
\includegraphics[width=.7\hsize]{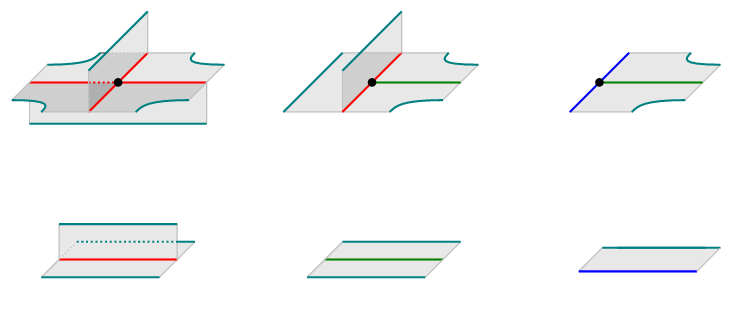}
\caption{The portions $V_i$ and $E_j$. }
\label{fig:VE}
\end{figure}
% - - - - - - - - - - - - - - - - -

We can embed each $V_i$ and $E_j$ in a $3$-ball properly, 
and we consider the orientations of these $3$-balls not to be fixed. 
By taking the boundary connected sums of them, 
we can construct the union $V_1\cup\cdots\cup V_n\cup E_1\cup\cdots\cup E_{n-1}$ 
with embedded in a $3$-ball properly since $T$ is a tree. 
The obtained $3$-ball will be denoted by $B_0$, 
in which $\Nbd(T;X)$ is embedded. 
Let us embed $B_0$ into $S^3$. 
We then attach $E^\ast_1,\ldots,E^\ast_{n'}$ to 
$V_1\cup\cdots\cup V_n\cup E_1\cup\cdots\cup E_{n-1}$ 
outside $B_0$ so that their neighborhood are trivial $1$-handles. 
Note that $E^\ast_1,\ldots,E^\ast_{n'}$ may have 
self-intersections as described in Figure~\ref{fig:singularity}.
It determines the immersion $\varphi:X_{\tilde\Gamma}\to S^3$, 
and we define $L_1$ and $L_2$ as done in the previous subsection. 
The Kirby diagram $L_1\sqcup L_2$ near a dotted circle is as shown in Figure~\ref{fig:tau}-(i), -(i)', -(ii) or -(iii), 
where each (i) and (i)' corresponds to a subarc in $S(X)$, 
(ii) corresponds to an arc in $\Gamma$ and 
(iii) corresponds to a boundary component of $X$. 
% - - - - - - - - - - - - - - - - -
\begin{figure}[tbp]
\labellist
\footnotesize\hair 2pt
\pinlabel {(i)} [Bl] at 5.67 429.68
\pinlabel {(i)'} [Bl] at 5.67 302.12
\pinlabel {(ii)} [Bl] at 5.67 174.56
\pinlabel {(iii)} [Bl] at 5.67 61.17
\pinlabel {$\tau$} [Bl] at 148.82 469.36
\pinlabel {$\tau$} [Bl] at 148.82 214.24
\pinlabel {$\tau$} [Bl] at 148.82 341.80
\pinlabel {$\tau$} [Bl] at 130.39  86.68
\pinlabel {$B_0$} [B] at 97.80 251.68 
\pinlabel {$B_0$} [B] at 97.80 379.23 
\pinlabel {$B_0$} [B] at 97.80 124.12 
\pinlabel {$B_0$} [B] at 282.05 251.68
\pinlabel {$B_0$} [B] at 133.81  10.73
\pinlabel {$B_0$} [B] at 318.06  10.73
\pinlabel {$B_0$} [B] at 282.05 379.23
\pinlabel {$B_0$} [B] at 282.05 124.12
\endlabellist
\centering
\includegraphics[width=.8\hsize]{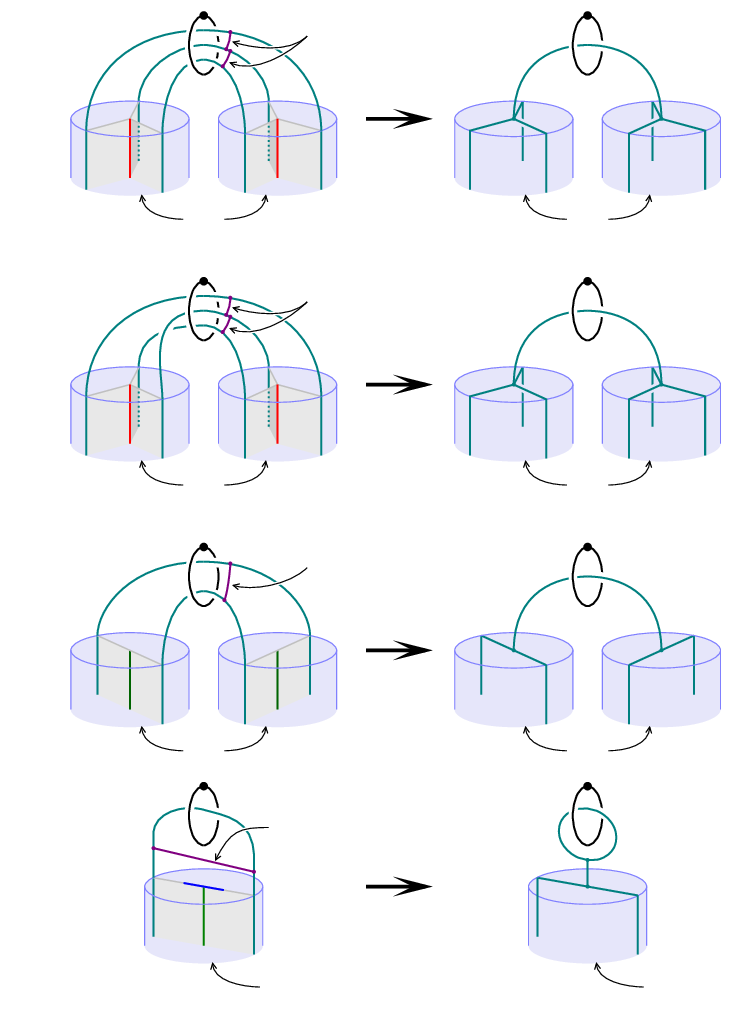}
\caption{The Kirby diagram outside $B_0$. }
\label{fig:tau}
\end{figure}
% - - - - - - - - - - - - - - - - -

Let $N$ be the $3$-manifold obtained from $S^3$ by $0$-surgery along $L_1$, 
and we now regard $L_2$ as a link in $N$. 
Note that $N$ is homeomorphic to $h(S^1\times S^2)$, 
where $h$ is equal to the $1$-weighted complexity $c_1(X)$. 
We then attach some certain arcs to $L_2$ in $N\setminus B_0$ 
near $E^\ast_1,\ldots,E^\ast_{n'}$ 
as shown in the left parts of 
Figures~\ref{fig:tau}-(i), -(i)', -(ii) and -(iii). 
Let $\tau$ denote the collection of those arcs. 
Two arcs are attached for each $e^\ast_k$ in $S(X)$, 
and one arc is attached for each $e^\ast_k$ in $\Gamma$ and in $\partial X$. 
% Note that the number of arcs in $\tau$ is equal to $2+2c_{1/2}(X)$. 
Set $\Sigma=\partial\Nbd(L_2\cup\tau;N)$. 
% The main aim in this subsection is to show the following.  

\begin{lemma}
\label{lem:genus_of_Sigma}
The genus of the surface $\Sigma$ is $3+2c_{1/2}(X)$. 
\end{lemma}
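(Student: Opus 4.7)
The plan is to realize $\Sigma$ as the boundary of the handlebody $H := \mathrm{Nbd}(L_2 \cup \tau; N)$ and to compute its genus via a deformation-retract plus an Euler-characteristic count, then reduce the problem to a combinatorial count of the arcs of $\tau$.

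First I would check that $L_2 \cup \tau$ is connected, so that $H$ is a connected handlebody with $\partial H = \Sigma$. This should follow from the connectedness of $\tilde\Gamma$ together with the way the arcs of $\tau$ prescribed in Figure~\ref{fig:tau} link, at each non-tree edge $e^*_k$, the circles of $L_2$ coming from the regions of $X$ adjacent to $e^*_k$. I expect this connectivity verification to be the main technical obstacle.

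Since $H$ deformation retracts onto the graph $L_2 \cup \tau$, the inclusion-exclusion formula
\[
\chi(L_2 \cup \tau) = \chi(L_2) + \chi(\tau) - \chi(L_2 \cap \tau) = 0 + |\tau| - 2|\tau| = -|\tau|,
\]
(using that $L_2$ is a disjoint union of circles, $\tau$ is a disjoint union of $|\tau|$ arcs, and $L_2 \cap \tau$ is the set of the $2|\tau|$ endpoints), together with the identity $g(\partial H) = 1 - \chi(H)$ for a connected handlebody, yields $g(\Sigma) = 1 + |\tau|$. So it suffices to establish $|\tau| = 2 + 2c_{1/2}(X)$.

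For the combinatorial count, let $s^*, g^*, b^*$ denote the numbers of edges of $\tilde\Gamma \setminus T$ lying in $S(X)$, $\Gamma$, $\partial X$, respectively, so that $|\tau| = 2s^* + g^* + b^*$ by the construction. A direct Euler-characteristic computation,
\[
\chi(\tilde\Gamma) = \chi(S(X)) + \chi(\Gamma) + \chi(\partial X) - 2\gamma = -c(X) - \gamma,
\]
where $\gamma := \sum_R (1 - \chi(R))$ and every true vertex contributes degree $4$ to $S(X)$, identifies the first Betti number of $\tilde\Gamma$ as $n' = s^* + g^* + b^* = c_1(X) + 1$. Meanwhile, because $T_0 \subset T$ is a spanning tree of $S(X)$, we have $s^* = E_S(X) - |T_0| = -\chi(S(X)) + 1 = c(X) + 1$. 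Combining,
\[
|\tau| = s^* + n' = \bigl(c(X) + 1\bigr) + \bigl(c_1(X) + 1\bigr) = 2 + 2c_{1/2}(X),
\]
and therefore $g(\Sigma) = 1 + |\tau| = 3 + 2c_{1/2}(X)$, as claimed.
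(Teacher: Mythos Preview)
Your proposal is correct and follows essentially the same route as the paper: compute $n'=s^*+g^*+b^*=c_1(X)+1$ via $\chi(\tilde\Gamma)$, observe that the non-tree edges in $S(X)$ each contribute two arcs to $\tau$ while the others contribute one, deduce $|\tau|=2+2c_{1/2}(X)$, and conclude that the genus is $|\tau|+1$. The paper simply asserts the last step (``the genus of $\Sigma$ is equal to this number plus~$1$''), whereas you justify it by the Euler-characteristic identity $g(\partial H)=1-\chi(H)$ after noting that $L_2\cup\tau$ must be connected; this is a welcome clarification rather than a different argument. One small slip: you call $T_0$ a spanning \emph{tree} of $S(X)$ and write $s^*=-\chi(S(X))+1$, but the paper defines $T_0$ as a spanning \emph{forest} (one tree per component of $S(X)$); your formula and the paper's assertion that $s^*=c(X)+1$ both tacitly use that $S(X)$ is connected, so you are matching the paper here as well.
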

\begin{proof}
One can see that 
\begin{align*}
\chi(\tilde\Gamma)
&=\chi(S(X_\Gamma))+\chi(\Gamma)+\chi(\partial X)-2\chi(\partial \Gamma)\\
&=-c(X_\Gamma)-\chi(\Gamma)\\
&=-c(X)-\sum_{R:\text{region}}(1-\chi(R)). 
\end{align*}
Therefore, 
the number $n'$ of the connected components of $\tilde\Gamma\setminus T$ is equal to $c(X)+\sum_{R:\text{region}}(1-\chi(R))+1$. 
Two arcs in $\tau$ are attached for each edge $e^\ast_k$ in $S(X)$, 
and the number of such edges is $c(X)+1$. 
Hence, the number of arcs in $\tau$ is 
\[
\left(c(X)+\sum_{R:\text{region}}(1-\chi(R))+1\right)
+
\big(c(X)+1\big)
=2+2c_{1/2}(X), 
\]
and the genus of $\Sigma$ is equal to this number plus $1$, 
namely $3+2c_{1/2}(X)$. 
\end{proof}
\begin{lemma}
\label{lem:Heegaard_surf}
The surface $\Sigma$ is a Heegaard surface of $N$. 
\end{lemma}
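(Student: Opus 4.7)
The plan is to verify both conditions of a Heegaard splitting: that $\Nbd(L_2\cup\tau;N)$ and $H := N\setminus\Int\Nbd(L_2\cup\tau;N)$ are both handlebodies sharing $\Sigma$ as their common boundary.

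The first side is almost immediate. The regular neighborhood in the orientable 3-manifold $N$ of a 1-complex is automatically a disjoint union of handlebodies, so it suffices to check that $L_2\cup\tau$ is connected. The components of $L_2 = \varphi(\partial X_{\tilde\Gamma}\setminus\partial X)$ correspond to the 2-cells of the cell decomposition of $X$ induced by cutting along $\tilde\Gamma$, and inspection of Figure~\ref{fig:tau} shows that the arcs of $\tau$ near each $e^*_k$ link these circles in a pattern that mirrors how the edges of $\tilde\Gamma$ glue its pieces together. Since $\tilde\Gamma$ is connected, so is $L_2\cup\tau$; its regular neighborhood is therefore a single handlebody, whose genus equals that of $\Sigma$ by Lemma~\ref{lem:genus_of_Sigma}.

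The main content is showing that $H$ is also a handlebody. My approach is to split the analysis into the contribution from inside the ball $B_0$ and the contribution from outside $B_0$, and then glue. Inside $B_0$, the pieces $V_1,\ldots,V_n$ and $E_1,\ldots,E_{n-1}$ are arranged along the tree $T$, so that $\Nbd(L_2\cap B_0;B_0)$ has a tree-like spine; an induction on $T$, using the local models in Figure~\ref{fig:VE}~(i)--(v) at each node and edge, shows that the complement inside $B_0$ deformation retracts onto a dual tree-like 1-complex, hence is a handlebody, and that adjacent pieces meet along disks. Outside $B_0$, the $0$-surgery along $L_1$ converts each dotted circle into a standard $S^1\times D^2$ handle of $N$, and the placement of the $\tau$-arcs depicted in Figure~\ref{fig:tau} splits each such handle (together with the local piece of $L_2$ passing through it) into two sub-handles: one contributing to $\Nbd(L_2\cup\tau;N)$ and its complementary one to $H$. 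Since all gluings between the inside- and outside-$B_0$ contributions occur along disks, $H$ is assembled from handlebody pieces along disks and is therefore itself a handlebody, necessarily of the same genus as the other side.

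The step I expect to be the main obstacle is the local analysis for $e^*_k \subset S(X)$ in Figure~\ref{fig:tau}~(i) and~(i)', where two arcs of $\tau$ interact with the $0$-surgery on the dotted circle. One must verify that these two $\tau$-arcs together form an unknotting tunnel system for the local portion of $L_2$ in the surgered piece of $N$, so that the local complement is genuinely a handlebody and no essential $2$-sphere or knotted tube is introduced. The analogous cases $e^*_k\subset\Gamma$ and $e^*_k\subset\partial X$ (Figure~\ref{fig:tau}~(ii), (iii)) each involve only a single $\tau$-arc and are comparatively routine. Once all the local types are verified, the inductive disk-by-disk assembly described above completes the proof.
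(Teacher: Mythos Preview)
Your plan is reasonable and could be completed, but it takes a substantially more laborious route than the paper's. Instead of analyzing the complement $H$ directly in $N$ via an inside/outside-$B_0$ decomposition and local case checks, the paper works first in $S^3$: after the homotopy indicated in the right-hand parts of Figure~\ref{fig:tau}, the entire spatial graph $L_1\sqcup(L_2\cup\tau)$ avoids $\Int B_0$, so $\Nbd(L_2\cup\tau;S^3)$ is a \emph{trivial} handlebody-knot in $S^3$, and each dotted circle of $L_1$ is visibly a meridian of $L_2\cup\tau$. Since $N$ is obtained from $S^3$ by $0$-surgery along these meridians, one side of $\Sigma$ remains a handlebody while the other acquires standard handles, giving a Heegaard splitting of $N$ at once. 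This global argument completely sidesteps the case-by-case verification you identified as the main obstacle (the two-$\tau$-arc configuration for $e^*_k\subset S(X)$): the right-hand pictures in Figure~\ref{fig:tau} already display the simplification that makes $L_1$ into meridians. Your direct approach would ultimately reach the same conclusion, but only after the several local checks you left unperformed; the paper's trick of staying in $S^3$ and recognizing meridians is precisely what buys the brevity.
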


% We will use the following obvious lemma in the proof of Lemma~\ref{lem:tunnel_number}. 
% \begin{lemma}
% Let $N$ be a closed $3$-manifold and $\Sigma$ an embedded surface in $N$. 
% Suppose that there exist $3$-manifolds $N_1$ and $N_2$ 
% having Heegaard surfaces $\Sigma_1$ and $\Sigma_2$, respectively, 
% such that $(N,\Sigma)\cong(N_1,\Sigma_1)\#(N_2,\Sigma_2)$. 
% Then $\Sigma$ is a Heegaard surface of $N$. 
% \end{lemma}
% \begin{proof}
% Obvious. 
% \end{proof}
% Then we prove Lemma~\ref{lem:tunnel_number}. 

% Figure~\ref{fig:standard_position} shows an embedded graph in $k(S^1\times S^2)$, 
% whose neighborhood gives a Heegaard splitting of $k(S^1\times S^2)$. 
\begin{proof}%[Proof of Lemma~\ref{lem:tunnel_number}]
% Our aim is to show that, by ambient isotopies of $k(S^1\times S^2)$ and simple homotopies of $L_2\cup\tau$, the spacial graph $L_1\sqcup(L_2\cup\tau)$ can be transformed into 
% $\tilde\Gamma$ with some rackets attached. 

% The process is divied into the following four steps. 
% % - - - - - - - - - - - - - - - - -
% \begin{figure}[tbp]
% \labellist
% \small\hair 2pt
% \pinlabel 0 [B] at 26.68 58.11
% \pinlabel 0 [B] at 80.54 58.11
% \endlabellist
% \centering
% \includegraphics[width=.4\hsize]{standard_position}
% \caption{The graph link $L_1\cup L_2\cup\tau$ after ambient isotopies of $S^3$ and simple homotopies of $L_2\cup\tau$. }
% \label{fig:standard_position}
% \end{figure}
% % - - - - - - - - - - - - - - - - -
Outside the $3$-ball $B_0$, 
the spacial graph $L_1\sqcup(L_2\cup\tau)$ can be homotoped as shown in the right parts of 
Figures~\ref{fig:tau}-(i), -(i)', -(ii) and -(iii). 
By our construction, $L_1\sqcup(L_2\cup\tau)$ does not lie in $\Int B_0$, 
and hence $\Nbd(L_2\cup\tau;S^3)$ is a trivial handlebody-knot in $S^3$. 
The dotted circles $L_1$ are meridians of $L_2\cup\tau$. 
It implies that $\Sigma$ is a Heegaard surface of $N$. 
\end{proof}
By Lemmas~\ref{lem:genus_of_Sigma}, \ref{lem:Heegaard_surf} and the construction in Subsection~\ref{subsec:hdl_dec_to_tris}, we have the following. 
\begin{proposition}
For any closed $4$-manifold $W$, 
$g(W)\leq 3 + 2\shco_{1/2}(W)$. 
\end{proposition}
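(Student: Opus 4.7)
The plan is to combine the output of Subsection~\ref{subsec:Kirby_diagrams_to_trisections} with the Meier--Zupan handle-decomposition-to-trisection recipe recalled in Subsection~\ref{subsec:hdl_dec_to_tris}. Given a closed $4$-manifold $W$, I would choose a shadow $X$ of $W$ whose $1/2$-weighted complexity realizes $\shco_{1/2}(W)$, first assuming $S(X)\ne\emptyset$ (the case needed for the construction to make sense).

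From such an $X$ I would build the Kirby diagram $L_1\sqcup L_2$ in $S^3$ produced in the previous subsection, where $L_1$ consists of dotted $1$-handle circles and $L_2$ is the $2$-handle attaching link (with some framings that play no role here). Performing $0$-surgery on $L_1$ yields $N\cong h(S^1\times S^2)$ with $h=c_1(X)$, and $L_2\subset N$ is the attaching link of the $2$-handles of a handle decomposition of $W$ having one $0$-handle, $h$ $1$-handles, the $2$-handles prescribed by $L_2$, together with suitable $3$- and $4$-handles on top. I would then append to $L_2$ the arc system $\tau$ constructed in Subsection~\ref{subsec:Kirby_diagrams_to_trisections} and set $\Sigma=\partial\Nbd(L_2\cup\tau;N)$.

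Lemma~\ref{lem:Heegaard_surf} guarantees that $\Sigma$ is a Heegaard surface of $N=\partial(H^0\cup H^1)$, which is precisely the statement that $\tau$ is an unknotting tunnel for $L_2$ in the sense of Subsection~\ref{subsec:hdl_dec_to_tris}. The Meier--Zupan construction then outputs a genuine trisection of $W$ whose central surface is $\Sigma$, and Lemma~\ref{lem:genus_of_Sigma} identifies its genus as $3+2c_{1/2}(X)$. Taking the infimum over admissible shadows would give $g(W)\le 3+2\shco_{1/2}(W)$.

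The remaining task is the degenerate case $S(X)=\emptyset$: by Lemma~\ref{lem:closed_surf} (used in Proposition~\ref{prop:rel_sh}) such an $X$ is either $S^2$ (so $W\in\{S^4,\CP,\mCP\}$, all satisfying $g\le 1\le 3$) or has non-empty boundary (so $W=k(S^1\times S^3)$), and the standard genus-$k$ trisection together with a direct estimate on $\shco_{1/2}$ settles the inequality by hand. The main obstacle I expect in writing this out carefully is not the inequality itself, which falls out of the cited lemmas, but the verification that the arc system $\tau$ indeed plays the role of an unknotting tunnel in the Meier--Zupan sense and that the Meier--Zupan procedure applies to the particular handle decomposition extracted from $X$; both points are essentially discharged by the local analysis in Figures~\ref{fig:VE} and~\ref{fig:tau} and by Lemma~\ref{lem:Heegaard_surf}.
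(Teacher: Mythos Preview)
Your proposal is correct and follows essentially the same route as the paper: invoke Lemma~\ref{lem:Heegaard_surf} to recognize $\tau$ as an unknotting tunnel, feed the resulting Heegaard surface into the Meier--Zupan construction of Subsection~\ref{subsec:hdl_dec_to_tris}, and read off the genus via Lemma~\ref{lem:genus_of_Sigma}. You are in fact more careful than the paper here, since you treat the degenerate case $S(X)=\emptyset$ explicitly, whereas the paper's one-line proof tacitly assumes the construction of Subsection~\ref{subsec:Kirby_diagrams_to_trisections} applies and defers the surface-shadow case to the subsequent propositions leading to Theorem~\ref{thm:complexity_genus}.
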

This result will be strengthened in the next subsection. 
\subsection{Proof of Theorem~\ref{thm:complexity_genus}}
We first prove two lemmas regarding conditions for simple polyhedra to be shadows of closed $4$-manifolds. 
\begin{lemma}
\label{lem:closed_surf}
A closed surface $X$ of non-zero genus 
is not a shadow of any closed $4$-manifold. 
\end{lemma}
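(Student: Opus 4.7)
The plan is to argue by contradiction. Suppose a closed surface $X\not\cong S^2$ is a shadow of some closed $4$-manifold $W$. Since surfaces have empty singular set, the thickening $M_X$ is a $D^2$-bundle over $X$, and the shadow condition forces $\partial M_X\cong k(S^1\times S^2)$ for some $k\in\Z_{\geq 0}$. Thus $\partial M_X$ must simultaneously be the total space of a circle bundle over $X$ and satisfy $\pi_1(\partial M_X)\cong F_k$ and $H_1(\partial M_X;\Z)\cong\Z^k$. The strategy is to show that the topology of circle bundles over any closed surface other than $S^2$ rules this out.

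First I would dispose of the aspherical cases, namely $X=\Sigma_g$ with $g\geq 1$ and $X=N_h$ with $h\geq 2$ (non-orientable with at least two cross-caps). In all of these, $\pi_2(X)=0$, so the homotopy long exact sequence of the fibration $S^1\to\partial M_X\to X$ collapses to a central extension
\[
1\to\Z\to\pi_1(\partial M_X)\to\pi_1(X)\to 1.
\]
Since $\pi_1(X)$ is infinite and non-cyclic, $\pi_1(\partial M_X)$ is infinite, strictly larger than $\Z$, and contains a non-trivial central subgroup. But $F_k$ is either trivial, isomorphic to $\Z$, or has trivial center, so none of these possibilities is compatible with $\pi_1(\partial M_X)\cong F_k$.

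The remaining case is $X=\RP^2$, where $\pi_2(X)=\Z$ obstructs the injectivity of $\pi_1(S^1)\to\pi_1(\partial M_X)$, so the central-extension argument is not directly available. Here I would switch to a homological argument. Because $X$ is embedded locally flatly in the oriented $4$-manifold $W$, the regular neighborhood $M_X$ inherits an orientation, so Lefschetz duality gives $H_i(M_X,\partial M_X;\Z)\cong H^{4-i}(M_X;\Z)\cong H^{4-i}(\RP^2;\Z)$. Combined with $H_1(\RP^2;\Z)=\Z/2$ and $H_2(\RP^2;\Z)=0$, the long exact sequence of the pair $(M_X,\partial M_X)$ yields the short exact sequence
\[
0\to\Z/2\to H_1(\partial M_X;\Z)\to\Z/2\to 0,
\]
so $H_1(\partial M_X;\Z)$ has order $4$ and in particular carries $2$-torsion. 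This contradicts $H_1(k(S^1\times S^2);\Z)\cong\Z^k$.

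The main obstacle is precisely the $\RP^2$ case, which is the one aspherical-failure among closed surfaces. The group-theoretic argument that cleanly handles all other genera breaks down, and one must exploit the orientability of the ambient $W$ (to make Lefschetz duality applicable with integer coefficients) together with the torsion in $H_1(\RP^2;\Z)$. All the other surface cases reduce uniformly to the observation that free groups have trivial center and cannot surject onto a non-cyclic surface group while admitting a central infinite cyclic subgroup.
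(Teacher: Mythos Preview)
Your approach is more detailed than the paper's, which simply records that the boundary of a $4$-dimensional thickening of a closed surface $X$ is an $S^1$-bundle over $X$ and asserts (as a standard fact from the theory of Seifert fibered spaces) that such a bundle is homeomorphic to $k(S^1\times S^2)$ only when the base is $S^2$. Your fundamental-group and homology case analysis is a reasonable substitute for invoking that classification.

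There is, however, a genuine gap in the aspherical non-orientable case $X=N_h$ with $h\geq 2$. You assert that the extension
\[
1\longrightarrow\Z\longrightarrow\pi_1(\partial M_X)\longrightarrow\pi_1(X)\longrightarrow 1
\]
is \emph{central}, but this fails here. Since $M_X\subset W$ is orientable while $X$ is not, the $D^2$-bundle $M_X\to X$ must satisfy $w_1(\text{bundle})=w_1(X)\neq 0$; equivalently, the monodromy $\pi_1(X)\to\mathrm{Aut}(\pi_1(S^1))\cong\Z/2\Z$ is nontrivial, so an orientation-reversing loop conjugates the fiber class $t$ to $t^{-1}$. Thus $t$ is not central, and the step ``$F_k$ has trivial center for $k\geq 2$'' no longer yields a contradiction.

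The repair is easy: drop centrality and use only that the image of $\pi_1(S^1)$ is a \emph{normal} infinite cyclic subgroup (injectivity coming from $\pi_2(X)=0$). If $\langle t\rangle\trianglelefteq F_k$ with $k\geq 2$, then $F_k/C_{F_k}(t)$ embeds in $\mathrm{Aut}(\Z)\cong\Z/2\Z$, so the cyclic group $C_{F_k}(t)$ has index at most $2$ in $F_k$, contradicting $k\geq 2$. Together with the observation that $F_0$ and $F_1$ cannot surject onto the non-cyclic $\pi_1(X)$, this closes the case. Your treatment of $X=\RP^2$ via Lefschetz duality is correct as written.
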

\begin{proof}
The boundary of a $4$-dimensional thickening of a closed surface $X$ 
is an $S^1$-bundle over $X$. 
Such a $3$-manifold is not homeomorphic to $k(S^1\times S^2)$ 
for any $k\in\Z$ unless the base space $X$ is the $2$-sphere. 
\end{proof}
\begin{lemma}
\label{lem:closed_polyh}
A closed simple polyhedron $X$ having a single region with $S(X)\ne \emptyset$ 
is not a shadow of any closed $4$-manifold. 
\end{lemma}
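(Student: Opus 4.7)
The plan is to argue by contradiction in the spirit of Lemma~\ref{lem:closed_surf}: the $3$-manifold $\partial M_X$ inherits structural constraints from the single-region assumption that rule out $\partial M_X\cong k(S^1\times S^2)$.

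Assume $X$ is a shadow of a closed $4$-manifold $W$, so $\partial M_X\cong k(S^1\times S^2)$ for some $k\geq 0$. Let $R$ be the unique region and $F$ the compact surface with $\Int F=R$; since $X$ is connected and $S(X)\ne\emptyset$, $F$ has nonempty boundary. Because $R$ is the only region, at every triple line of $S(X)$ all three adjacent sheets belong to $R$, so the attaching map $\partial F\to S(X)$ is a $3$-fold covering away from the true vertices, with six preimages above each true vertex.

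I will first show $H_2(X;\Z/2)=0$ using the long exact sequence of the pair $(X,S(X))$. Since $S(X)$ is a graph, $H_2(S(X);\Z/2)=0$, so $H_2(X;\Z/2)$ injects into $H_2(X,S(X);\Z/2)\cong H_2(F,\partial F;\Z/2)\cong\Z/2$. The generator of the latter is sent by the connecting homomorphism to $\sum_{e\subset S(X)}3[e]\equiv\sum_e[e]\pmod 2\in H_1(S(X);\Z/2)$, which is a nonzero cycle in the graph $S(X)$. Hence $H_2(X;\Z/2)=0$, and because $X$ is $2$-dimensional (so $H_2(X;\Z)$ is free), $H_2(X;\Z)=0$.

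Next, I will apply Poincar\'e-Lefschetz duality $H_i(M_X,\partial M_X;\Z)\cong H^{4-i}(X;\Z)$ together with $H^3(X;\Z)=0$ to extract from the long exact sequence of $(M_X,\partial M_X)$ the short exact sequence
\[
0\to\mathrm{Ext}(H_1(X;\Z),\Z)\to H_1(\partial M_X;\Z)\to H_1(X;\Z)\to 0.
\]
Since $\mathrm{Ext}(H_1(X;\Z),\Z)\cong\mathrm{Tors}\,H_1(X;\Z)$ and $H_1(k(S^1\times S^2);\Z)=\Z^k$ is torsion-free, this at once forces $\mathrm{Tors}\,H_1(X;\Z)=0$. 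In the remaining torsion-free case, combining Seifert-van Kampen ($\pi_1(W)\cong\pi_1(X)$) with Euler-characteristic bookkeeping ($\chi(X)=1-b_1(X)$ and $\chi(W)=\chi(X)+1-k$) pins down $b_2(W)=b_1(X)-k$ and $k=b_1(X)$; the main obstacle will then be to extract from the $3$-fold covering structure of $\partial F\to S(X)$ a nontrivial Seifert-fibered piece of $\partial M_X$ that cannot embed in $\#_k(S^1\times S^2)$, completing the contradiction.
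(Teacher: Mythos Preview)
Your proof is incomplete. The homological arguments in the first two paragraphs are correct --- the computation that $H_2(X;\Z/2)=0$ via the connecting homomorphism, and the short exact sequence forcing $\mathrm{Tors}\,H_1(X;\Z)=0$, are both valid --- but they do not by themselves produce a contradiction. Your final paragraph explicitly acknowledges this: ``the main obstacle will then be to extract \ldots a nontrivial Seifert-fibered piece of $\partial M_X$ that cannot embed in $\#_k(S^1\times S^2)$.'' That sentence is a plan, not an argument, and it is precisely where the content of the lemma lies. The Euler-characteristic bookkeeping you do (yielding $k=b_1(X)$ and $b_2(W)=0$) is correct but is not used anywhere toward a contradiction.

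The paper's proof skips the homological preliminaries entirely and goes straight to the geometry of $\partial M_X$. Writing $\bar R\cong\Sigma_{g,b}$ for the closure of the region minus a neighborhood of $S(X)$, one decomposes $\partial M_X$ along the tori lying over $\partial\,\Nbd(S(X);X)$ into $N_0\cong\bar R\times S^1$ together with pieces $N_i$ sitting over the components of $S(X)$; by Costantino--Thurston each $N_i$ is either a specific small Seifert piece or hyperbolic of finite volume. When $(g,b)\ne(0,1)$ all pieces are irreducible with incompressible torus boundary, so $\partial M_X$ is irreducible; hence if $\partial M_X\cong k(S^1\times S^2)$ then $k=0$, but then the nontrivial torus decomposition contradicts the triviality of the JSJ decomposition of $S^3$. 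The exceptional case $(g,b)=(0,1)$ --- where $N_0$ is a solid torus with compressible boundary and the irreducibility argument fails --- is handled separately by citing \cite[Corollary~3.17]{Cos06b}. Your homological reductions neither shortcut this geometric argument nor isolate the exceptional disk-region case; to finish your proof you would still have to run exactly this JSJ/irreducibility machinery.
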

\begin{proof}
Let $R$ be the unique region of $X$, 
and set $\bar R=R\setminus\Int\Nbd(S(X);X)$. 
Suppose that $\bar R$ is homeomorphic to $\Sigma_{g,b}$. 
Note that $b>0$ by $S(X)\ne \emptyset$. 
Let $M$ be any $4$-dimensional thickening of $X$ 
and $\pi:M\to X$ the projection. 
We suppose that $\partial M$ is homeomorphic to $k(S^1\times S^2)$ for some $k\in\Z_{\geq0}$ 
to lead a contradiction. 

% Suppose $g\ne 0$. 
% Then $\pi^{-1}(\bar R)$ is homeomorphic to $\bar R\times S^1$. 
% Since $\pi^{-1}(\bar R)$ is a codimension $0$ submanifold of $k(S^1\times S^2)$ 
% with $\partial(\pi^{-1}(\bar R))$ consisting of tori, 
% there must exist slopes on $\partial(\pi^{-1}(\bar R))$ such that the Dehn filling 
% along the slopes yields $k'(S^1\times S^2)$ for some $k'\in\Z_{\geq0}$ \cite[Lemma 7.5]{KMN18}. 
% However, such slopes do not exist by the classification of Seifert fibered spaces. 

If $g=0$ and $b=1$, 
$\partial M$ is not homeomorphic to $k(S^1\times S^2)$ for any $k\in\Z_{\geq0}$ 
by \cite[Collorary 3.17]{Cos06b}. 

Suppose that $g\ne0$ or that $g=0$ and $b\geq2$. 
Let $S_1,\ldots,S_m$ be the connected components of $S(X)$, 
and set $N_i=\pi^{-1}(\Nbd(S_i;X))$ for $i\in\{1,\ldots,m\}$. 
Set $N_0=\partial M\setminus\Int(N_1\cup\cdots\cup N_m)$, 
which is homeomorphic to $\bar R\times S^1$. 
Therefore, the $3$-manifold $\partial M$ is decomposed into $N_0,N_1,\ldots,N_m$ along certain embedded tori. 
For each $i\in\{1,\ldots,m\}$, 
$N_i$ is homeomorphic to $\Sigma_{0,3}\times S^1$, $(\Sigma_{0,2},(2,1))$, 
or $(\Sigma_{0,1},(3,1),(3,-1))$ if $S_i$ contains no true vertices, 
and otherwise $N_i$ admits a complete hyperbolic structure with finite volume \cite{CT08}. 
Since all $N_0,N_1,\ldots,N_n$ are irreducible $3$-manifolds, 
$\partial M$ is also irreducible. 
Hence $k=0$, that is, $\partial M$ is $S^3$. 
On the other hand, 
the decomposition $N_0\cup N_1\cup\cdots\cup N_m$ is the canonical one by the irreducibility, 
which contradicts the topology of $S^3$. 
\end{proof}
We next prove Proposition~\ref{prop:thm_case1}, \ref{prop:thm_case2} and \ref{prop:thm_case3}, 
which allows us to show Theorem~\ref{thm:complexity_genus}. 
\begin{proposition}
\label{prop:thm_case1}
Let $X$ be a shadow of a closed $4$-manifold $W$. 
If $X$ is the $2$-sphere or is a surface with boundary, then $g(W)\leq 2 + 2c_{1/2}(X)$. 
\end{proposition}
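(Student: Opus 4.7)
The plan is to split into two cases according to whether $X$ is homeomorphic to the $2$-sphere or is a surface with non-empty boundary, and in each case to combine the classification already carried out in the proof of Proposition~\ref{prop:rel_sh}(3) with the known trisection genera of the resulting $4$-manifolds.

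First, if $X\cong S^2$, then the proof of Proposition~\ref{prop:rel_sh}(3) identifies $W$ as one of $S^4$, $\CP$, or $\mCP$; each of these has trisection genus at most $1$. Since $c_{1/2}(S^2)=0$ by definition, the inequality $g(W)\leq 1 \leq 2 = 2+2c_{1/2}(X)$ follows immediately.

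Second, suppose $X$ is a (possibly non-orientable) surface with non-empty boundary. Since $X$ collapses onto a spine graph of first Betti number $1-\chi(X)$, the thickening $M_X$ is diffeomorphic to $h(S^1\times B^3)$ with $h=1-\chi(X)$. The shadow condition forces $\partial M_X\cong h(S^1\times S^2)$ to match the copies of $S^1\times B^3$ glued back to form $W$, and Laudenbach--Po\'enaru's uniqueness theorem yields $W\cong h(S^1\times S^3)$. Its trisection genus is well known to equal $h$, whereas $2+2c_{1/2}(X)=2+(1-\chi(X))=2+h$, so the desired bound $g(W)=h\leq 2+h$ holds.

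Overall the proposition amounts to a bookkeeping step on top of Proposition~\ref{prop:rel_sh}(3); no new geometric construction is needed, and the slack of $2$ in the second case merely reflects the coarse nature of the definition of $c_{1/2}$ rather than any non-trivial estimate. The only mildly delicate point is rigorously confirming the diffeomorphism $W\cong h(S^1\times S^3)$ when $X$ is non-orientable, which is handled by the collapsing argument above together with Laudenbach--Po\'enaru.
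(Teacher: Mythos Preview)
Your proof is correct and follows essentially the same two-case argument as the paper: identify $W$ as $S^4$, $\pm\CP$ when $X\cong S^2$, and as $h(S^1\times S^3)$ with $h=1-\chi(X)=2c_{1/2}(X)$ when $X$ is a surface with boundary, then compare with the known trisection genera. You supply slightly more justification (the collapsing argument and the appeal to Laudenbach--Po\'enaru) than the paper, which simply asserts the diffeomorphism type of $W$, but the content is the same.
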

\begin{proof}
If $X$ is the $2$-sphere, 
then $W$ is diffeomorphic to $S^4$, $\CP$ or $\mCP$. 
In either case, $g(W)\leq 1$, and the lemma holds. 

If $X$ is a surface with boundary, 
$W$ is diffeomorphic to $k(S^1\times S^3)$, 
where $k=2c_{1/2}(X)$. 
It is easy to see that $g(k(S^1\times S^3))=k$, 
and hence the lemma holds. 
\end{proof}
We need the following lemma for the proof of Proposition~\ref{prop:thm_case2}
\begin{lemma}
\label{lem:cut_system}
Let $\alpha=\alpha_1\sqcup\cdots\sqcup\alpha_g$ be a cut system of a $3$-dimensional handlebody 
$H\cong g(S^1\times B^2)$ 
and $\alpha_0, \alpha'_0$ simple closed curves in $\partial H\setminus \alpha$. 
Suppose there exist $i\in\{1,\ldots,g\}$ and orientations of $\alpha_0, \alpha'_0$ and $\alpha_i$ 
such that $[\alpha'_0]-[\alpha_0]=[\alpha_i]$ in $H_1(\partial H)$. 
% $\alpha$ with $\alpha_i$ replaced with either $\alpha_0$ or $\alpha'_0$ 
Let $\tilde\alpha$ and $\tilde\alpha'$ be the collections of curves obtained 
from $\alpha$ by replacing $\alpha_i$ with $\alpha_0$ and $\alpha'_0$, respectively. 
Then either one of $\tilde\alpha$ and $\tilde\alpha'$ is a cut system of $H$. 
Moreover, if there exists a simple closed curve $\gamma\subset \partial H$ such that 
$\gamma$ intersects each $\alpha_0$ and $\alpha_i$ transversely once and $\gamma\cap\alpha_j=\emptyset$ for any $j\in\{1,\ldots,g\}\setminus\{i\}$, 
then $\tilde\alpha$ is a cut system of $H$. 
\end{lemma}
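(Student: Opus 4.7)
The plan is to reduce everything to a computation on a single torus. Cutting $H$ along the meridian disks $D_j$ bounded by $\alpha_j$ for $j\neq i$ produces a genus-$1$ handlebody $H'$ (a solid torus) in which the disk $D_i$ is still embedded, so $\alpha_i$ becomes a meridian $\mu$ of $H'$. The curves $\alpha_0, \alpha_i, \alpha'_0$ are all disjoint from $\alpha_j$ for $j\neq i$, so they survive as simple closed curves on the torus $\partial H'$, with $\alpha_0$ and $\alpha'_0$ each disjoint from $\mu$. Observe that $\tilde\alpha$ (resp.\ $\tilde\alpha'$) is a cut system of $H$ if and only if $\alpha_0$ (resp.\ $\alpha'_0$) is a meridian of the solid torus $H'$, i.e.\ isotopic to $\mu$ on $\partial H'$; this is the geometric reformulation I will use throughout.

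Next I invoke the elementary fact that a simple closed curve on $T^2$ disjoint from $\mu$ must be either null-homotopic or parallel to $\mu$. Fixing a symplectic basis $(\lambda,\mu)$ of $H_1(\partial H')$, this forces $[\alpha_0]_{H'},[\alpha'_0]_{H'}\in\{(0,0),(0,1),(0,-1)\}$. To descend the hypothesis, I note that the $1$-cycle $\alpha'_0-\alpha_0-\alpha_i$ bounds a $2$-chain in $\partial H$; the cap-off procedure transforms this chain into a $2$-chain in $\partial H'$ with the same boundary, so $[\alpha'_0]_{H'}-[\alpha_0]_{H'}=[\mu]$ in $H_1(\partial H')$. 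Enumerating the finitely many admissible pairs of classes leaves only two possibilities, and in each of them exactly one of $\alpha_0,\alpha'_0$ has class $\pm(0,1)$. This proves the first assertion.

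For the moreover clause, $\gamma$ is also disjoint from every $\alpha_j$ with $j\neq i$, so it survives on $\partial H'$ and meets $\mu$ transversely once there. Writing $[\gamma]_{H'}=(p,q)$ in the basis $(\lambda,\mu)$, the single intersection with $\mu$ forces $p=\pm1$. The single transverse intersection of $\gamma$ with $\alpha_0$ then gives $\pm1=[\gamma]_{H'}\cdot[\alpha_0]_{H'}=pk$ where $[\alpha_0]_{H'}=(0,k)$, so $k=\pm1$. Hence $\alpha_0$ is parallel to $\mu$ on $\partial H'$, hence a meridian of $H'$, and $\tilde\alpha$ is a cut system of $H$.

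The main obstacle I anticipate is the careful justification that the homology relation descends from $\partial H$ to $\partial H'$ under the capping off, together with the equivalence ``$\alpha_0$ is a meridian of $H'$'' $\Leftrightarrow$ ``$\tilde\alpha$ is a cut system of $H$''; both steps are standard but need to be handled cleanly so that the argument truly reduces to the classification of disjoint simple closed curves on $T^2$, after which the remainder is a short case analysis.
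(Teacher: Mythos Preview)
Your approach is correct and essentially the same as the paper's: both arguments cut $H$ along the disks $D_j$ with $j\neq i$ to obtain a solid torus $V$ (your $H'$), observe that $\alpha_0,\alpha'_0,\alpha_i$ survive as simple closed curves on $\partial V$ with $\alpha_i$ a meridian, and then use the elementary fact that a simple closed curve on $T^2$ disjoint from a meridian is either inessential or parallel to that meridian. The paper phrases the conclusion geometrically (``either $\alpha_0$ or $\alpha'_0$ is isotopic to $\alpha_1$ in $\partial V$'') while you run the same dichotomy through the homology classes $(0,0),(0,\pm1)$ and the intersection form; your treatment of the ``moreover'' clause via $[\gamma]\cdot[\alpha_0]=\pm1$ is likewise just the algebraic version of the paper's observation that $\gamma$ is a longitude of $V$. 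You are in fact slightly more careful than the paper about why the relation $[\alpha'_0]-[\alpha_0]=[\alpha_i]$ in $H_1(\partial H)$ descends to $H_1(\partial V)$---the paper asserts this without comment---so the point you flag as a potential obstacle is already adequately handled in your sketch.
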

\begin{proof}
We can assume that $i=1$ without loss of generality. 
% If both $\alpha_0$ and $\alpha_0'$ are separating essential curves, the curve obtained by handlesliding $\alpha_0$ over $\alpha_0'$ is also separating. This contradicts $[\alpha'_0]-[\alpha_0]=[\alpha_1]$ in $H_1(\partial H)$ since $\alpha_1$ is a non-separating curve.
% Hence at least one of them is a non-separating curve in $\partial H$. 
Let $D_1\ldots,D_g$ be mutually disjoint disks embedded in $H$ properly such that $\partial D_j=\alpha_j$ for $j\in\{1,\ldots,g\}$. 
Set $V=H\setminus \bigcup_{j=2}^g \Int\Nbd(D_j;H)$. 
It is homeomorphic to a solid torus, 
and $\alpha_0,\alpha'_0$ and $\alpha_1$ are mutually disjoint simple closed curves in $\partial V$. 
Since $[\alpha'_0]-[\alpha_0]=[\alpha_1]$ in $H_1(\partial H)$, 
either one of $\alpha'_0$ or $\alpha_0$ is isotopic to $\alpha_1$ in $\partial V$. 
Assume $\alpha_0$ is isotopic to $\alpha_1$. 
Then, there exists a properly embedded disk $D_0$ in $H$ such that 
\begin{itemize}
 \item
$\partial D_0=\alpha_0$
\item
it does not intersect all $D_1\ldots,D_g$, and  
\item
$D_0$ is isotopic to $D_1$ in $V$. 
\end{itemize}
It follows that $\alpha_0\sqcup\alpha_2\sqcup\cdots\sqcup\alpha_g$ is also a cut system of $H$. 

Then we suppose that a simple closed curve $\gamma$ as in the statement of the lemma exists. 
Since $\gamma$ does not intersect $\alpha_j$ for $j\in\{2,\ldots,g\}$, 
it is also a simple closed curve in $\partial V$, especially a longitude of $V$. 
By the assumption that $\alpha_0$ intersect $\gamma$ transversely once and does not intersect $\alpha_1$, 
the curves $\alpha_0$ and $\alpha_1$ are parallel in $\partial V$. 
Thus, the lemma is proved. 
\end{proof}
\begin{proposition}
\label{prop:thm_case2}
Let $X$ be a shadow of a closed $4$-manifold $W$. 
If $S(X)\ne \emptyset$ and $\partial X=\emptyset$, then $g(W)\leq 2 + 2c_{1/2}(X)$. 
\end{proposition}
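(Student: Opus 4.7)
The plan is to start from the trisection diagram $(\Sigma,\alpha,\beta,\gamma)$ produced by the construction of Subsection~\ref{subsec:Kirby_diagrams_to_trisections} applied to some shadow $X$ of $W$, and then destabilize it once. By Lemma~\ref{lem:genus_of_Sigma}, this diagram has genus $3+2c_{1/2}(X)$, so a single destabilization produces a trisection of $W$ of genus $2+2c_{1/2}(X)$, which is exactly the bound we want.

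Thus the work is to locate a destabilization triple $(\alpha_h,\beta_i,\gamma_j)$ in the diagram. Since $\partial X=\emptyset$, the tunnel arcs of type (iii) in Figure~\ref{fig:tau} do not occur, and by Lemma~\ref{lem:closed_polyh} the polyhedron $X$ must have at least two regions, so $\tilde\Gamma\setminus T$ is non-empty; in particular there is at least one edge $e^\ast$ of $\tilde\Gamma\setminus T$ lying in $S(X)$ (since $T_0$ already spans each component of $S(X)$ and $T$ only adjoins further edges to connect the forest $T_0$ to the vertices of $\Gamma$, any homologically non-trivial loop created by closing up $X$ forces such an $e^\ast$). I would single out one such edge $e^\ast$ and show that the two tunnel arcs of types (i) and (i)' attached near $E^\ast$ in Figure~\ref{fig:tau}, together with the dotted circle encircling $e^\ast$, produce locally on $\Sigma$ a pair of parallel meridional $\beta$ and $\gamma$ curves, plus a third curve (belonging to $\alpha$) that crosses each of them transversely exactly once.

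To make this rigorous, I would use Lemma~\ref{lem:cut_system} to perform handle slides among the $\alpha$-, $\beta$-, and $\gamma$-curves and replace one curve of each cut system by a curve supported in the neighborhood of $E^\ast\cup\tau$. The homological relation needed to invoke Lemma~\ref{lem:cut_system} comes directly from the fact that $e^\ast$ is a non-separating edge of $\tilde\Gamma$, so the two boundary-region meridians associated to the two tunnel arcs differ from an $L$-meridian by the class of a single $\alpha$-curve in $H_1(\partial H)$. After these slides the three chosen curves isolate in a punctured torus inside $\Sigma$, giving a genuine destabilization triple in the sense of Subsection~\ref{subsec:Trisections}, and the diagram can then be destabilized.

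The main obstacle is the combinatorial bookkeeping: verifying that the chosen edge $e^\ast$ really does exist for every shadow with $S(X)\ne\emptyset$ and $\partial X=\emptyset$, and that after the handle slides provided by Lemma~\ref{lem:cut_system} the three resulting curves satisfy both conditions of a destabilization triple (exactly two parallel, the third meeting each transversely once, and no unwanted intersections with the remaining $g-1$ curves in each cut system). All other pieces—reduction to this case via Proposition~\ref{prop:thm_case1}, the genus count, and the general construction of $\Sigma$—are already in hand from Lemma~\ref{lem:genus_of_Sigma}, Lemma~\ref{lem:Heegaard_surf}, and the preceding subsection.
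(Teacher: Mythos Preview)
Your overall strategy is the paper's: locate a destabilization triple near some edge $e^\ast\subset S(X)$ of $\tilde\Gamma\setminus T$ and reduce the genus by one. The gap is in the choice of $e^\ast$.

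The paper does not use an arbitrary such edge. It first uses the fact (from Lemma~\ref{lem:closed_polyh}) that $X$ has at least two regions to find a triple line $\ell_0$ along which \emph{not all three adjacent regions coincide}, and then arranges $T$ so that $\ell_0\setminus T\ne\emptyset$. This is not bookkeeping but the key idea: the three strands of $L_2$ running past $e^\ast$ belong to attaching circles $K_1,K_2,K_3$, and the destabilization argument needs at least two of these to be distinct components of $L_2$. Even with that guaranteed, the paper must split into Case~(i) ($K_1,K_2,K_3$ pairwise distinct, where a direct handle-slide of $\beta$-curves suffices) and Case~(ii) ($K_1\ne K_2=K_3$, where one or two applications of Lemma~\ref{lem:cut_system} are required, with a further subcase depending on which of the two replacement curves the lemma produces). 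If all three $K_i$ coincide the local argument does not go through, and nothing in your sketch rules this out for the edge you select.

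Your local description of the triple is also inverted relative to the paper: the destabilization triple actually found has $\alpha_1$ parallel to a slid $\beta$-curve (both equal to the curve $\delta_1$ encircling the dotted-circle handle), with a $\gamma$-curve coming from $K_1$ crossing them once---not parallel $\beta,\gamma$ with an $\alpha$-curve transverse. Correspondingly, the homological relation fed into Lemma~\ref{lem:cut_system} is among $\beta$-type curves on $\Sigma_g$ (for instance $[\delta_1]-[\delta_2]=[\beta_1]$), not the relation between tunnel-arc meridians and an $\alpha$-curve that you describe.
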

\begin{proof}
By Lemma~\ref{lem:closed_polyh}, $X$ has at least two regions, 
and hence $X$ has a triple line $\ell_0$ 
such that at least one of three regions adjacent to $\ell_0$ differs from the others. 
Then we choose a spanning tree $T$ of $\tilde\Gamma$ and 
an immersion $\varphi:X_{\tilde\Gamma}\to S^3$ as considered in 
Subsection~\ref{subsec:Kirby_diagrams_to_trisections}, 
and we can assume that $\ell_0\setminus T\ne \emptyset$ since $S(X)$ is quartic. 
Then we draw a Kirby diagram $L_1\sqcup L_2$ of $W$ as done in Subsection~\ref{subsec:Kirby_diagrams_to_trisections}. 
Note that, for such a Kirby diagram, 
we already have constructed a trisection of $W$ of genus $3 + 2c_{1/2}(X)$, 
so it suffices to show that the genus of this trisection can always decrease by $1$. 

The part of the Kirby diagram $L_1\sqcup L_2$ corresponding to the arc $\ell_0\setminus T$ 
is shown in the left of Figure~\ref{fig:find_triple}-(i) (cf. Figure~\ref{fig:tau}-(i) and -(i)'), 
where $K_1,K_2$ and $K_3$ are the attaching circles of $2$-handles corresponding to the regions adjacent to $\ell_0$. 
By the construction of a trisection in Subsection~\ref{subsec:hdl_dec_to_tris}, 
we obtain a part of trisection diagram as shown in the right of Figure~\ref{fig:find_triple}-(i), 
where we draw some simple closed curves $\delta_1,\ldots,\delta_{10}$. 
Note that $\delta_8,\delta_9$ and $\delta_{10}$ are only partially depicted in the figure. 
By the assumption of $\ell_0$, we can assume either one of the following; 
\begin{enumerate}
 \item[(i)] 
$K_1,K_2$ and $K_3$ are mutually distinct, or
 \item[(ii)]
$K_1$ differs from $K_2=K_3$. 
\end{enumerate}
% - - - - - - - - - - - - - - - - -
\begin{figure}[tbp]
\labellist
\footnotesize\hair 2pt
\pinlabel {$\delta_1$}   [B] at 237.00 157.98
\pinlabel {$\delta_2$}   [B] at 228.50  92.20
\pinlabel {$\delta_3$}   [B] at 214.33 137.56
\pinlabel {$\delta_4$}   [B] at 214.33 114.88
\pinlabel {$\delta_5$}   [B] at 214.33  92.20
\pinlabel {$\delta_6$}   [B] at 251.18  92.20
\pinlabel {$\delta_7$}   [B] at 246.26 114.05
\pinlabel {$\delta_8$}    [Br] at 188.98 147.06
\pinlabel {$\delta_9$}    [Br] at 188.98 124.38
\pinlabel {$\delta_{10}$} [Br] at 188.98 101.71

\pinlabel {(i)}  [Br] at 26.99 124.38
\pinlabel {(ii)} [Br] at 26.99  39.34

\pinlabel {$K_1$} [Br] at 52.00 142.06
\pinlabel {$K_2$} [Br] at 52.00 119.38
\pinlabel {$K_3$} [Br] at 52.00 96.71

\pinlabel {$\alpha_1$} [B] at 228.50 72.94
\pinlabel {$\beta_1$} [B] at 237.00 72.94
\pinlabel {$\beta_2$} [B] at 228.50 47.43
\pinlabel {$\gamma_1$} [B] at 214.33  9.73
\pinlabel {$\gamma_2$} [B] at 237.00 47.43

\pinlabel {$\tau$} [B]  at 125.04 90.37
\pinlabel {$\tau$} [Bl] at 116.53 64.02

\endlabellist
\centering
\centering
\includegraphics[width=1\hsize]{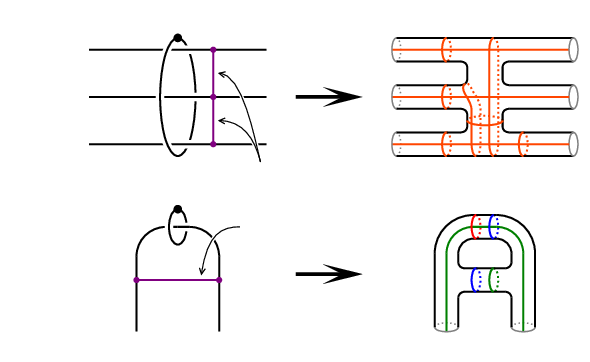}
\caption{Parts of the Kirby diagram $L_1\sqcup L_2$ and the corresponding parts of the central surface.}
\label{fig:find_triple}
\end{figure}
% - - - - - - - - - - - - - - - - -

We first suppose (i). 
As mentioned in Subsection~\ref{subsec:hdl_dec_to_tris}, 
the curves $\alpha=\alpha_1\sqcup\cdots\sqcup\alpha_g$, $\beta=\beta_1\sqcup\cdots\sqcup\beta_g$ and $\gamma=\gamma_1\sqcup\cdots\sqcup\gamma_g$ of 
a trisection diagram $(\Sigma_g,\alpha,\beta,\gamma)$ of $W$ can be chosen so that 
\begin{itemize}
 \item
$\alpha_1=\delta_1$,  
 \item 
$\beta_{1}=\delta_3,\ \beta_{2}=\delta_4$ and $\beta_{3}=\delta_5$, and 
 \item 
$\gamma_{1}=\delta_8,\ \gamma_{2}=\delta_9$ and $\gamma_{3}=\delta_{10}$. 
\end{itemize}
Note that $\gamma_1,\gamma_2$ and $\gamma_3$ come from $K_1,K_2$ and $K_3$, respectively. 
Let $\beta'_{1}$ be $\delta_1$, which is obtained from $\beta_{1}$ by handle sliding over $\beta_{2}$ and then over $\beta_{3}$. 
Then $(\Sigma_g,\alpha,\beta',\gamma)$ is also a trisection diagram of $W$, where $\beta'=\beta'_1\sqcup\beta_2\sqcup\cdots\sqcup\beta_g$. 
Since the triple $(\alpha_1,\beta'_{1},\gamma_{1})$ forms a destabilization triple, 
we obtain $g(W)\leq 2 + 2c_{1/2}(X)$ by a destabilization. 

We next suppose (ii). 
As mentioned in Subsection~\ref{subsec:hdl_dec_to_tris}, 
the curves $\alpha=\alpha_1\sqcup\cdots\sqcup\alpha_g$, $\beta=\beta_1\sqcup\cdots\sqcup\beta_g$ and $\gamma=\gamma_1\sqcup\cdots\sqcup\gamma_g$ of 
a trisection diagram $(\Sigma_g,\alpha,\beta,\gamma)$ of $W$ can be chosen so that 
\begin{itemize}
 \item
$\alpha_1=\delta_1$,  
 \item 
$\beta_{1}=\delta_3,\ \beta_{2}=\delta_4$ and $\beta_{3}=\delta_7$, and 
 \item 
$\gamma_{1}=\delta_8,\ \gamma_{2}=\delta_9(=\delta_{10})$ and $\gamma_{3}=\delta_{7}$. 
\end{itemize}
Note that $\gamma_1$ and $\gamma_2$ come from $K_1$ and $K_2(=K_3)$, respectively. 
% Set $\beta_{4,\ldots,g}=\beta_4\sqcup\cdots\sqcup\beta_g$. 
By Lemma~\ref{lem:cut_system}, 
$\beta_{3}$ can be replaced with another curve $\beta'_3$, 
where $\beta'_3$ is either $\delta_5$ or $\delta_6$. 
Suppose that $\beta'_3=\delta_5$, that is, 
three curves of $\beta$ can be chosen as $\delta_3,\delta_4$ and $\delta_5$. 
Then we can find a destabilization triple in the same way as in (i), 
and we obtain $g(W)\leq 2 + 2c_{1/2}(X)$ by a destabilization. 
Then suppose that $\beta'_3=\delta_6$, 
and set $\displaystyle \beta'=\beta_1\sqcup\beta_2\sqcup\beta_3'\sqcup\big(\beta_{4}\sqcup\cdots\sqcup\beta_g\big)$. 
% and set $\displaystyle \beta'=\beta_1\sqcup\beta_2\sqcup\beta_3'\sqcup\left(\bigsqcup_{j=4}^g\beta_{j}\right)$. 
% \Erase{We handle slide $\beta_{1}$ over $\beta_{2}$ to get another curve $\beta'_{1}$, 
% so that we obtain another trisection diagram $(\Sigma_g,\alpha,\beta',\gamma)$ of $W$, 
% where $\beta'=\beta'_1\sqcup\beta_2\sqcup\cdots\sqcup\beta_{g}$. 
% Note that $\beta'_{1}=\delta_2$. }
We note that $[\delta_1]-[\delta_2]=[\beta_1]$ in $H_1(\Sigma_g)$ for some orientations. 
Since $\delta_8$ is a simple closed curve intersecting $\beta'$ exactly once at a point of $\beta_{1}$, 
we can replace $\beta_{1}$ with $\delta_1$, which will be denoted by $\beta'_{1}$, by Lemma~\ref{lem:cut_system}. 
Hence, $(\Sigma_g,\alpha,\beta'',\gamma)$ is also a trisection diagram of $W$, where $\beta''=\beta'_1\sqcup\beta_2\sqcup\beta_3'\sqcup\big(\beta_{4}\sqcup\cdots\sqcup\beta_g\big)$. 
Then the triple $(\alpha_1, \beta''_{1}, \gamma_{1})$ is a destabilization one, 
and we obtain $g(W)\leq 2 + 2c_{1/2}(X)$ by a destabilization. 
\end{proof}
\begin{proposition}
\label{prop:thm_case3}
Let $X$ be a shadow of a closed $4$-manifold $W$. 
If $S(X)\ne \emptyset$ and $\partial X\ne\emptyset$, then $g(W)\leq 2 + 2c_{1/2}(X)$. 
\end{proposition}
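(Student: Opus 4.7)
The plan is to imitate the proof of Proposition~\ref{prop:thm_case2}, but with a component of $\partial X$ playing the role of the triple line $\ell_0$. The case 2 proof starts with a genus-$(3+2c_{1/2}(X))$ trisection constructed from the shadow and reduces the genus by $1$ by exhibiting a destabilization triple in the diagram; the same route should work here, using the boundary structure to locate the triple.

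Because $\partial X\neq\emptyset$, fix a connected component $C$ of $\partial X$. By the definition of a cut system, exactly one arc of $\Gamma$ has an endpoint on $C$, so $C$ contains exactly one vertex of $\tilde\Gamma$ and therefore contributes a single self-loop edge $e_0^{\ast}$ to $\tilde\Gamma$. A self-loop cannot belong to any spanning tree, so $e_0^{\ast}\in\tilde\Gamma\setminus T$ regardless of the choice of $T$. Now run the construction of the immersion $\varphi$, the Kirby diagram $L_1\sqcup L_2$, the unknotting tunnel $\tau$, and the resulting genus-$(3+2c_{1/2}(X))$ trisection of Subsection~\ref{subsec:Kirby_diagrams_to_trisections}. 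By construction, the local picture of $L_1\sqcup(L_2\cup\tau)$ near $e_0^{\ast}$ is precisely that of Figure~\ref{fig:tau}-(iii): a dotted circle of $L_1$ encircles a short arc together with a tunnel strand $\tau_0$, and exactly one strand of $L_2$, belonging to the attaching circle $K_0$ of the $2$-handle coming from the unique region $R_0$ incident to $C$, passes through this dotted circle.

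Translating this local picture to the central surface via the recipe of Subsection~\ref{subsec:hdl_dec_to_tris} produces: a meridian $\alpha_0$ of the dotted circle contributing to the $\alpha$-system, a meridian $\beta_0$ of $K_0$ contributing to the $\beta$-system, a framing curve $\gamma_0$ of $K_0$ contributing to the $\gamma$-system, and, according to whether $\tau_0$ is used as a connecting arc of the unknotting tunnel, either nothing or a parallel pair of $\beta$- and $\gamma$-meridians of $\tau_0$. In either situation, one expects at most one handle slide — justified by Lemma~\ref{lem:cut_system} — to produce a triple among suitable (possibly slid) replacements of $\alpha_0,\beta_0,\gamma_0$ such that two of the curves are parallel, the third intersects each of them transversely exactly once, and the whole triple lies inside a punctured torus that can be disjoined from the rest of $\alpha\cup\beta\cup\gamma$. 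Such a triple is a destabilization triple, and destabilizing along it reduces the trisection genus by $1$, yielding $g(W)\leq 2+2c_{1/2}(X)$.

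The main obstacle, as in the proof of Proposition~\ref{prop:thm_case2}, is making the paragraph above rigorous: one has to inspect Figure~\ref{fig:tau}-(iii) together with the explicit recipe of Subsection~\ref{subsec:hdl_dec_to_tris} to identify the relevant simple closed curves on the central surface, verify parallelism and the single transverse intersection, and check that the handle slides used to uncover the destabilization triple can be performed without disturbing the remainder of the cut systems. The present case is structurally simpler than Proposition~\ref{prop:thm_case2}, since only a single region is adjacent to $e_0^{\ast}$ and hence the case split (i)/(ii) collapses; the residual subtlety is a small subcase analysis according to the role of $\tau_0$ in the unknotting tunnel, which I expect to be handled by Lemma~\ref{lem:cut_system} in essentially the same manner as in case (ii) of that earlier proof.
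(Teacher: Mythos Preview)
Your approach is essentially the paper's: use a boundary component of $X$ to locate an edge of $\tilde\Gamma\setminus T$ whose local picture is Figure~\ref{fig:tau}-(iii), then read off a destabilization triple in the resulting genus-$(3+2c_{1/2}(X))$ trisection diagram. Your observation that the boundary edge is a self-loop, hence automatically outside any spanning tree, is exactly right.

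Where you diverge from the paper is in the anticipated complications. The paper does not invoke Lemma~\ref{lem:cut_system} here, nor does it split into subcases according to the role of the tunnel arc $\tau_0$. Instead it simply asserts (with reference to Figure~\ref{fig:find_triple}-(ii)) that the trisection diagram can be drawn so that specific curves $\alpha_1,\beta_1,\beta_2,\gamma_1,\gamma_2$ appear as in that figure, and that $(\alpha_1,\beta_1,\gamma_1)$ is visibly a destabilization triple---no handle slides are required. The freedom to arrange this comes from the choice of which tunnel arcs serve as the ``connecting'' arcs $\tau_1,\ldots,\tau_{\ell-1}$ in the recipe of Subsection~\ref{subsec:hdl_dec_to_tris}: one simply declares $\tau_0$ not to be among them, so that $\tau_0$ receives parallel $\beta$- and $\gamma$-meridians ($\beta_2,\gamma_2$). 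Your hedging (``one expects'', ``I expect to be handled'') and the proposed subcase analysis are therefore unnecessary; the argument is in fact simpler than you feared, and strictly simpler than the proof of Proposition~\ref{prop:thm_case2}.
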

\begin{proof}
Let $T$ be a spanning tree of $\Gamma$ and 
$\varphi$ an immersion $X_{\tilde\Gamma}\to S^3$ as considered in 
Subsection~\ref{subsec:Kirby_diagrams_to_trisections}. 
Then we draw a Kirby diagram $L_1\sqcup L_2$ of $W$ as done in Subsection~\ref{subsec:Kirby_diagrams_to_trisections}. 
For such a Kirby diagram, 
we have already constructed a trisection of $W$ of genus $3 + 2c_{1/2}(X)$. 

Since $\partial X\ne\emptyset$, 
the Kirby diagram $L_1\sqcup L_2$ contains a part as shown in the left of Figure~\ref{fig:find_triple}-(ii) (cf. Figure~\ref{fig:tau}-(iii)). 
By the construction of a trisection in Subsection~\ref{subsec:hdl_dec_to_tris}, 
we obtain a part of trisection diagram as shown in the right of Figure~\ref{fig:find_triple}-(ii). 
Moreover, a trisection diagram $(\Sigma_g,\alpha,\beta,\gamma)$ of $W$ can be drawn 
so that simple closed curves $\alpha_1,\beta_1,\beta_2,\gamma_1$ and $\gamma_2$ of 
$\alpha=\alpha_1\sqcup\cdots\sqcup\alpha_g$, $\beta=\beta_1\sqcup\cdots\sqcup\beta_g$ and $\gamma=\gamma_1\sqcup\cdots\sqcup\gamma_g$ 
are as shown in the right of Figure~\ref{fig:find_triple}-(ii). 
In this diagram, $(\alpha_1,\beta_1,\gamma_1)$ is a destabilization triple, 
and hence we get $g(W)\leq 2 + 2c_{1/2}(X)$. 
\end{proof}

We are now ready to prove Theorem~\ref{thm:complexity_genus}. 
\begin{theorem}
\label{thm:complexity_genus}
For any closed $4$-manifold $W$ and any real number $r\geq1/2$, 
$g(W)\leq 2 + 2\shco_r(W)$. 
\end{theorem}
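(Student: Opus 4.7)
The plan is to derive Theorem~\ref{thm:complexity_genus} as an immediate consequence of the three cases established in Propositions~\ref{prop:thm_case1}, \ref{prop:thm_case2} and \ref{prop:thm_case3}, together with Lemma~\ref{lem:closed_surf} and the monotonicity $c_{1/2}(X)\leq c_r(X)$ for $r\geq 1/2$.

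First I would fix a closed $4$-manifold $W$ and a real number $r\geq 1/2$, and take an arbitrary shadow $X$ of $W$. By Lemma~\ref{lem:closed_surf}, $X$ is not a closed surface of positive genus, so exactly one of the following holds: (a) $X$ is homeomorphic to $S^2$; (b) $S(X)=\emptyset$ and $\partial X\neq\emptyset$ (so $X$ is a compact surface with boundary); (c) $S(X)\neq\emptyset$ and $\partial X=\emptyset$; (d) $S(X)\neq\emptyset$ and $\partial X\neq\emptyset$. Cases (a) and (b) are covered by Proposition~\ref{prop:thm_case1}, case (c) by Proposition~\ref{prop:thm_case2}, and case (d) by Proposition~\ref{prop:thm_case3}. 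In each situation we obtain the uniform bound
\[
g(W)\leq 2+2c_{1/2}(X).
\]

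Next I would invoke the obvious pointwise inequality $c_{1/2}(X)\leq c_r(X)$ for $r\geq 1/2$, which follows directly from the definition of the $r$-weighted complexity (the number of true vertices is unchanged, and the contribution $r(1-\chi(R))\geq 0$ of each region is non-decreasing in $r$). Combining this with the previous inequality yields $g(W)\leq 2+2c_r(X)$ for \emph{every} shadow $X$ of $W$.

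Finally, taking the infimum over all shadows of $W$ gives $g(W)\leq 2+2\shco_r(W)$, as required. There is no real obstacle here: the conceptual content has been distributed into the three case propositions above, and the present proof is simply a case-split followed by monotonicity of $c_r$ in $r$. The only minor care needed is to confirm that the enumeration of cases (a)--(d) is exhaustive for shadows of closed $4$-manifolds, which is exactly what Lemma~\ref{lem:closed_surf} guarantees.
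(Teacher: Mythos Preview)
Your proposal is correct and follows essentially the same route as the paper: a case split on whether $S(X)$ and $\partial X$ are empty (with Lemma~\ref{lem:closed_surf} ruling out closed surfaces of positive genus), application of Propositions~\ref{prop:thm_case1}--\ref{prop:thm_case3} to obtain $g(W)\leq 2+2c_{1/2}(X)$, and then monotonicity in $r$. The only cosmetic difference is that the paper invokes the monotonicity at the level of the invariant via Proposition~\ref{prop:rel_sh} (i.e., $\shco_{1/2}(W)\leq\shco_r(W)$) rather than pointwise via $c_{1/2}(X)\leq c_r(X)$ before taking the infimum, but these are equivalent.
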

\begin{proof}
Let $X$ be any shadow of $W$. 
It is enough to show the inequality $g(W)\leq 2 + 2c_{1/2}(X)$ 
since $\shco_{1/2}(W)\leq\shco_{r}(W)$ by Proposition~\ref{prop:rel_sh}. 
By Lemmas~\ref{lem:closed_surf} and \ref{lem:closed_polyh}, 
at least one of the following holds; 
\begin{itemize}
 \item
$X$ is the $2$-sphere or a surface with boundary, 
 \item 
$S(X)\ne \emptyset$ and $\partial X = \emptyset$, or 
 \item 
$S(X)\ne \emptyset$ and $\partial X \ne \emptyset$. 
\end{itemize}
In either case, 
we have $g(W)\leq 2 + 2\shco_{1/2}(X)$ by Propositions~\ref{prop:thm_case1}, \ref{prop:thm_case2} and \ref{prop:thm_case3}. 
\end{proof}
% \begin{remark}
% We have also shown in this section that 
% if a closed $4$-manifold admits a shadow having $b$ boundary components and $r$ regions, 
% then the $4$-manifold admits a $(g;k_1,k_2,k_3)$-trisection, 
% where 
% \begin{align*}
% g   &= 2 + 2c_{1/2}(W),\\ 
% k_1 &= c_1(X),\\ 
% k_2 &= 3 + 2c_{1/2}(W)-r,\\ 
% k_3 &= 1-c_1(X)+r-\chi(W). 
% \end{align*}
% \end{remark}
\subsection{Examples}
\label{subsec:Examples}
In this section, we will determine the exact values of $\shco_{1/2}$ for infinite families of certain $4$-manifolds by using Theorem~\ref{thm:complexity_genus}. 

Now we define a simple polyhedron $X_k$ for $k\in\Z_{\geq1}$. 
Let $X_1$ be the $2$-sphere, which is encoded by a graph shown in Figure~\ref{fig:Xk}-(i). 
For $k\geq2$, 
% $X_k$ is defined as follows. 
let $C_1,\ldots,C_{k-1}$ be simple closed curves in $X_1$ such that they split $X_1$ into two disks and $k-2$ annuli. 
Then $X_k$ is defined as a simple polyhedron obtained from $X_1$ by attaching $2$-disks $D_1,\ldots,D_{k-1}$ 
% so that $\partial D_i$ is identified with $\partial C_i$ for $i\in\{1,\ldots,k-1\}$. 
along their boundaries to $C_1,\ldots,C_{k-1}$, respectively. 
The polyhedron $X_k$ is shown in Figure~\ref{fig:Xk3} and encoded in Figure~\ref{fig:Xk}-(iii). 
Note that $\mathrm{rank} H_2(X_k)=k$ and $c_{1/2}(X_k)=\max\left\{0,\frac{k-2}2\right\}$. 
% - - - - - - - - - - - - - - - - -
\begin{figure}[tbp]
\labellist
\footnotesize\hair 2pt
\pinlabel {(i)}     [Br] at  20.05 26.93
\pinlabel {(ii)}    [Br] at  98.00 26.93
\pinlabel {(iii)}   [Br] at 197.21 26.93
\pinlabel {$k-1$}   [B]  at 255.74 46.77
\endlabellist
\centering
\includegraphics[width=.6\hsize]{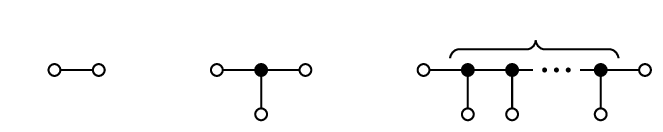}
\caption{Encoding graphs of (i) $X_1$, (ii) $X_2$ and (iii) $X_k$. }
\label{fig:Xk}
% \pinlabel {$k+1$} [B] at 289.76 46.77
% \pinlabel {(i)}   [Br] at  19.05 21.26
% \pinlabel {(ii)}  [Br] at 230.23 21.26
% \pinlabel {$R_0$}   [B] at  33.22 19.84
% \pinlabel {$Q_0$}   [B] at  47.40 19.84
% \pinlabel {$R_1$}   [B] at  75.74 19.84
% \pinlabel {$Q_1$}   [B] at  61.57 19.84
% \pinlabel {$R_2$}   [B] at  89.91 19.84
% \pinlabel {$R_k$}   [B] at 146.61 19.84
% \pinlabel {$Q_k$}   [B] at 132.43 19.84
% \pinlabel {$R_{k+1}$} [B] at 160.78 19.84
% \endlabellist
% \centering
\includegraphics[width=.35\hsize]{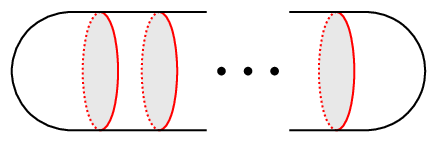}
\caption{The simple polyhedron $X_k$. }
\label{fig:Xk3}
\end{figure}
% - - - - - - - - - - - - - - - - -
\begin{proposition}
\label{prop:Examples}
For any non-negative integers $k_1, k_2$ and $k_3$, 
% \begin{enumerate}
%  \item 
% $\shco_{1/2}(k\CP)=\shco_{1/2}(k\mCP)=\max\left\{0,\frac{k-2}2\right\}$. 
%  \item 
% $\shco_{1/2}(k(S^2\times S^2))=k-1$. 
% \end{enumerate}
\[
\shco_{1/2}\left(k_1(S^2\times S^2)\# k_2\CP\# k_3\mCP\right)=\max\left\{0,\frac{2k_1+k_2+k_3-2}2\right\}. 
\]
\end{proposition}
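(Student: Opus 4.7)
Set $k=2k_1+k_2+k_3$ and $W=k_1(S^2\times S^2)\# k_2\CP\# k_3\mCP$. The plan is to establish matching upper and lower bounds on $\shco_{1/2}(W)$.

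\textbf{Lower bound.} When $k\leq 2$ the claim $\shco_{1/2}(W)\geq 0$ is automatic. When $k\geq 2$, Theorem~\ref{thm:complexity_genus} yields
\[
\shco_{1/2}(W)\ \geq\ \frac{g(W)-2}{2}.
\]
Any $(g;g_1,g_2,g_3)$-trisection of $W$ satisfies the Euler-characteristic identity $\chi(W)=2+g-g_1-g_2-g_3$. Since $W$ is simply connected with $\chi(W)=2+k$, this forces $g=k+g_1+g_2+g_3\geq k$, so $g(W)\geq k$ and the lower bound follows.

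\textbf{Upper bound.} For $k\leq 1$, the shadow $X_1=S^2$ with gleam $0$, $+1$, or $-1$ realizes $S^4$, $\CP$, or $\mCP$ respectively, so $\shco_{1/2}(W)=0$. For $k\geq 2$, I would exhibit gleams on $X_k$ making it into a shadow of $W$. Enumerating the $2k-1$ regions of $X_k$ (two polar disks, $k-2$ annuli, and $k-1$ attached disks $D_1,\ldots,D_{k-1}$) and summing $1-\chi(R)$ over them gives
\[
c_{1/2}(X_k)\ =\ 0+\tfrac12\bigl(0+(k-2)+0\bigr)\ =\ \tfrac{k-2}{2},
\]
which matches the target value. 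To produce the claimed shadow I would apply the recipe of Section~4: draw a Kirby diagram of the thickening $M_{X_k}$ determined by a chosen gleam, then check that after handle slides the resulting link consists of $k_1$ copies of a $0$-framed Hopf link (one Hopf pair per $S^2\times S^2$ summand), $k_2$ copies of a $(+1)$-framed unknot, and $k_3$ copies of a $(-1)$-framed unknot, together with the dotted circles forced by the combinatorics of $X_k$.

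\textbf{Main obstacle.} The subtle step is the gleam assignment of the upper bound: one must simultaneously arrange (a) that the boundary of $M_{X_k}$ is diffeomorphic to some $h(S^1\times S^2)$, so that $X_k$ really is a shadow of a closed $4$-manifold, and (b) that the closed $4$-manifold so obtained is precisely $W$. I would address (a) by choosing the gleams on the $k-2$ annular regions compatibly with their forced $\Z_2$-gleams, and (b) by a local Kirby-calculus check at each attached disk $D_j$: a gleam of $\pm 1$ on $D_j$ contributes a $\pm\CP$ summand, while two consecutive disks of gleam $0$ joined by an annulus of gleam $0$ contribute an $S^2\times S^2$ summand. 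Combining with the lower bound above then gives the claimed equality.
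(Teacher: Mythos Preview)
Your proposal is correct and follows essentially the same two-step strategy as the paper: the upper bound comes from exhibiting $X_k$ as a shadow of $W$ with $c_{1/2}(X_k)=\max\{0,(k-2)/2\}$, and the lower bound comes from Theorem~\ref{thm:complexity_genus} together with $g(W)\geq k$. In fact you supply more detail than the paper does---the paper simply asserts both that $X_k$ embeds as a shadow and that $g(W)=k$, whereas you justify the latter via the Euler-characteristic identity for trisections and sketch a gleam assignment for the former.
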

\begin{proof}
Set $k=2k_1+k_2+k_3$ and $W=k_1(S^2\times S^2)\# k_2\CP\# k_3\mCP$. 
If $k=0$, the equality holds since $W\ (=S^4)$ admits a shadow homeomorphic to the sphere whose $1/2$-weighted complexity is $0$. 

Suppose $k\geq1$. 
The simple polyhedron $X_k$ can be embedded in $W$ as a shadow, 
and hence $\shco_{1/2}(W)\leq c_{1/2}(X_k)=\max\left\{0,\frac{k-2}2\right\}$. 
% Note that gleams of $D_0^-$ and the annular regions are all given by $\pm1$, and those of the other regions are given by $0$. 
On the other hand, since $g(W)=k$, 
we have $\shco_{1/2}(W)\geq \frac{k-2}2$ by Theorem~\ref{thm:complexity_genus}. 
The value of $\shco_{1/2}$ must not be negative. 
We obtain $\shco_{1/2}(W)=\max\left\{0,\frac{k-2}2\right\}$. 
\end{proof}

\begin{remark}
\label{rmk:best_result}
\begin{enumerate}
 \item 
By considering the same shadow $X_k$ of $k\CP$, 
we also have $\shco_{r}(k\CP)\leq \max\left\{0,(k-2)r\right\}$ for $0\leq r<1/2$. 
It follows that $k\CP$ violates the inequality $g\leq2+2\shco_{r}$ for $0\leq r<1/2$ and $k\geq3$, 
and the minimum of $r$ satisfying the inequality in Theorem~\ref{thm:complexity_genus} is $1/2$. 
 \item 
The examples given in Proposition~\ref{prop:Examples} attain all the pairs 
$(g,\shco_{1/2})\in\Z_{\geq0}\times \frac12\Z_{\geq0}$ satisfying the equality $g=2+2\shco_{1/2}$. 
Therefore, the inequality $g\leq2+2\shco_{1/2}$ shown in Theorem~\ref{thm:complexity_genus} is the best possible result. 
\end{enumerate}
\end{remark}
%==================================================================================
\section{Closed $4$-manifolds with $\shco_{1/2}\leq1/2$}
% - - - - - - - - - - - - - - - - - - - - - - - - - - - - - - - - - - - - - - - - -
This section is mainly devoted to the proof of Theorem~\ref{thm:complexity1/2}, 
which, in conjunction with Theorem~\ref{thm:complexity0}, provides the classification of all closed $4$-manifolds with $\shco_{1/2}\leq1/2$. 
We start with exhibit simple polyhedra with $c_{1/2}\leq1/2$. 

\subsection{Simple polyhedra with $c_{1/2}\leq1/2$}
% In this subsection, we exhibit simple polyhedra with $c_{1/2}\leq1/2$. 
% Let $X$ be a simple polyhedron with $S(X)\ne\emptyset$ or homeomorphic to $S^2$. 
Let $X$ be a simple polyhedron such that it is not homeomorphic to a closed surface or is homeomorphic to $S^2$. 

We first consider the case $c_{1/2}(X)=0$. 
Then $X$ is homeomorphic to $S^2$, or it is a special polyhedron without true vertices. 
% Hence a region of $X$ is a $2$-disk or $\RP^2$. 
% Therefore, the simple polyhedra with $c_{1/2}=0$ are shown in Figure~\ref{fig:c=0}. 
% - - - - - - - - - - - - - - - - -
% \begin{figure}[tbp]
% \labellist
% \footnotesize\hair 2pt
% \pinlabel (p1) [Br] at  26.91 19.84
% \pinlabel (d1) [Br] at 154.47 19.84
% \pinlabel (d2) [Br] at 282.03 19.84
% \pinlabel (d3) [Br] at 409.59 19.84
% \pinlabel (d4) [Br] at 537.15 19.84
% \endlabellist
% \centering
% \includegraphics[width=1\hsize]{c=0}
% \caption{Simple polyhedra with $c_{1/2}=0$.}
% \label{fig:c=0}
% \end{figure}
% - - - - - - - - - - - - - - - - -
% The simple polyhedra shown in Figures~\ref{fig:c=0}-(p1), -(d1),$\ldots$, -(d4) 
% will be denoted by $X_{\rm (p1)}, X_{\rm (d1)},\ldots,X_{\rm (d4)}$, respectively. 
The closed $4$-manifolds in which $S^2$ is embedded as shadows are only $S^4$, $\CP$ and $\mCP$. 
The closed $4$-manifolds with $\spshco=0$ are classified by Costantino in \cite{Cos06b}, 
and thus we have the following. 
\begin{theorem}
[{cf. \cite[Theorem 1.1]{Cos06b}}]
\label{thm:complexity0}
The $1/2$-weighted shadow-complexity of a closed $4$-manifold $W$ is $0$ 
if and only if $W$ is diffeomorphic to either one of 
$S^4$, $\CP$, $\mCP$, $S^2\times S^2$, $2\CP$, $\CP\#\mCP$ or $2\mCP$. 
% \[
% S^4,\ \CP,\ \mCP,\ S^2\times S^2,\ \CP\#\CP,\ \CP\#\mCP\text{ or }\mCP\#\mCP. 
% \]
\end{theorem}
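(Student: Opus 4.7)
The plan is to prove both directions of the equivalence separately. For the backward direction $(\Leftarrow)$, I would combine Proposition~\ref{prop:rel_sh}, which gives the inequality $\shco_{1/2}(W) \leq \spshco(W)$, with Costantino's classification \cite[Theorem~1.1]{Cos06b} asserting that the seven listed manifolds are exactly those with $\spshco = 0$. As a direct check, for the three manifolds $S^4$, $\CP$ and $\mCP$ one may exhibit $S^2$ with the appropriate gleam as a shadow, for which $c_{1/2} = 0$ by the convention in the definition of $c_r$.

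For the forward direction $(\Rightarrow)$, I would fix a shadow $X$ of $W$ realizing $c_{1/2}(X) = 0$ and extract structural constraints from the formula
\[
c_{1/2}(X) = c(X) + \tfrac12 \sum_{R:\text{region}}(1 - \chi(R)).
\]
Both summands are non-negative, so both must vanish: $X$ has no true vertices and every region has $\chi(R) = 1$, i.e.\ is an open disk. The only way around this dichotomy is the special case $X \cong S^2$ in the definition, and in that case the possible gleams on $S^2$ produce $W \in \{S^4, \CP, \mCP\}$.

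Assuming henceforth $X \not\cong S^2$, I would split into cases according to whether $S(X)$ is empty. If $S(X) = \emptyset$, then $X$ is a (possibly bordered) surface; Lemma~\ref{lem:closed_surf} rules out closed surfaces of positive genus, and the disk-region condition forces $X \cong D^2$, whose $4$-dimensional thickening is $B^4$ so $W = S^4$. If $S(X) \neq \emptyset$, then $X$ has no true vertices and only disk regions, hence is by definition a \emph{special} polyhedron with $c(X) = 0$; in particular $\spshco(W) = 0$, and Costantino's classification places $W$ among the remaining four manifolds $S^2 \times S^2$, $2\CP$, $\CP\#\mCP$, $2\mCP$. Gathering all cases yields exactly the seven listed manifolds.

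The only real obstacle is the invocation of Costantino's classification of closed $4$-manifolds with $\spshco = 0$, which is imported as a black box; everything else reduces to a direct unpacking of the definitions of $c_{1/2}$ and of special polyhedra, combined with Lemma~\ref{lem:closed_surf}.
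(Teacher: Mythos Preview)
Your proposal is correct and follows essentially the same route as the paper: the paper's argument (given in the paragraph preceding the theorem statement) observes that $c_{1/2}(X)=0$ forces $X\cong S^2$ or $X$ to be a special polyhedron without true vertices, then invokes the $S^2$ case and Costantino's classification of $\spshco=0$ manifolds exactly as you do. Your write-up is slightly more explicit (separating out the $S(X)=\emptyset$ subcase and the disk $D^2$), but the substance is identical; note only that in your $S(X)\neq\emptyset$ case Costantino's result yields all seven manifolds, not just ``the remaining four''---though of course this does not affect the final list.
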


We next consider the case $c_{1/2}(X)=1/2$. 
Then $X$ has no true vertices, 
and all regions of $X$ are $2$-disks except one region $R_0$. \
The Euler characteristic $\chi(R_0)$ of $R_0$ is $0$, 
and hence $R_0$ is an annulus or a M\"obius band. 
Therefore, the simple polyhedra with $c_{1/2}=1/2$ are shown in Figure~\ref{fig:c=1/2}.
% - - - - - - - - - - - - - - - - -
\begin{figure}[tbp]
\labellist
\footnotesize\hair 2pt
% \pinlabel (t1) [Br] at  35.57 304.72
% \pinlabel (k1) [Br] at  198.57 304.72
\pinlabel (a1) [Br] at  35.57 248.03
\pinlabel (a2) [Br] at 191.48 248.03
\pinlabel (a3) [Br] at 347.39 248.03
\pinlabel (a4) [Br] at 503.29 248.03

\pinlabel (a5) [Br] at  35.57 191.34
\pinlabel (a6) [Br] at 191.48 191.34
\pinlabel (a7) [Br] at 347.39 191.34
\pinlabel (a8) [Br] at 503.29 191.34

\pinlabel (a9) [Br] at  35.57 134.65
\pinlabel (a10) [Br] at 191.48 134.65
\pinlabel (a11) [Br] at 347.39 134.65
\pinlabel (a12) [Br] at 503.29 134.65

\pinlabel (a13) [Br] at  35.57 77.95
\pinlabel (a14) [Br] at 163.13 77.95
\pinlabel (a15) [Br] at 290.69 77.95
\pinlabel (a16) [Br] at 418.25 77.95
\pinlabel (a17) [Br] at 545.81 77.95

\pinlabel (m1) [Br] at  35.57 21.26
\pinlabel (m2) [Br] at 163.13 21.26
\pinlabel (m3) [Br] at 290.69 21.26
\pinlabel (m4) [Br] at 418.25 21.26
\pinlabel (m5) [Br] at 545.81 21.26

\endlabellist
\centering
\includegraphics[width=1\hsize]{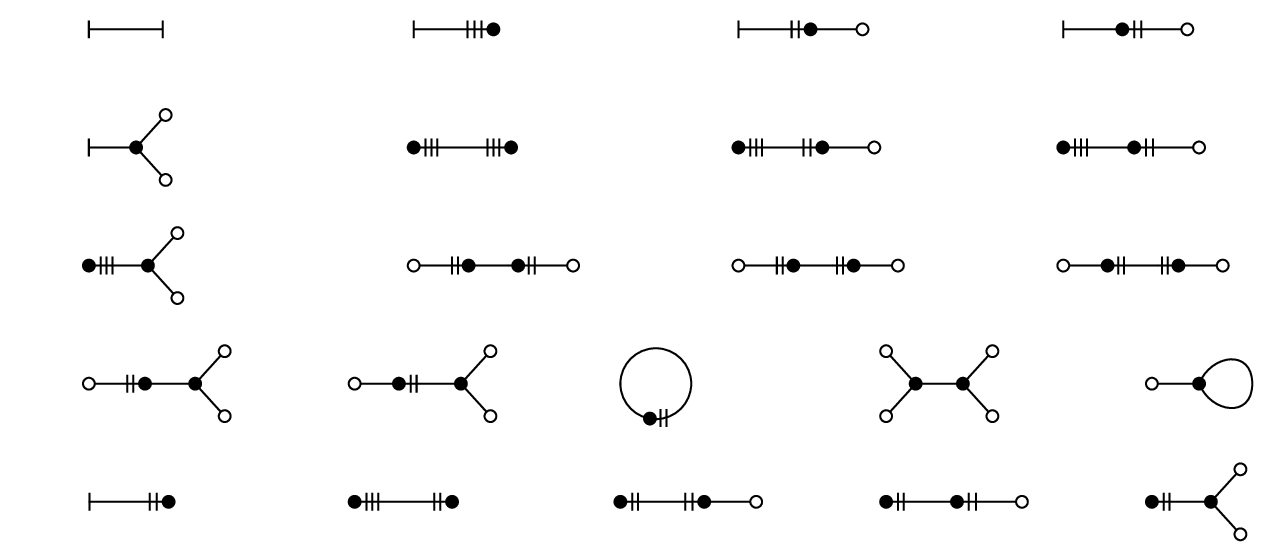}
\caption{Simple polyhedra with $c_{1/2}=1/2$.}
\label{fig:c=1/2}
\end{figure}
% - - - - - - - - - - - - - - - - -
The simple polyhedra encoded in 
Figures~\ref{fig:c=1/2}-(a1),$\ldots$, -(a14), -(a16), -(m1),$\ldots$, -(m5) 
will be denoted by $X_{\rm (a1)},\ldots,X_{\rm (a14)}, X_{\rm (a16)}, X_{\rm (m1)},\ldots,X_{\rm (m5)}$, respectively. 

Each encoding graph shown in Figures~\ref{fig:c=1/2}-(a15) and -(a17) has a cycle, 
it can not determine a simple polyhedron uniquely. 
Actually, each of them corresponds to exactly two simple polyhedra up to homeomorphisms. 
% The graph (t1) with cohomology class $0$ corresponds to a torus, 
% and that with cohomology class $1$ corresponds to a Klein bottle. 
% A Klein bottle is already represented by $X_{\rm (k1)}$, 
% so let $X_{\rm (t1)}$ denote the simple polyhedron encoded by the graph (t1) 
% with cohomology class $0$, namely a torus. 
% The simple polyhedra encoded by the graph (t1) with cohomology class $0$ and $1$, 
% respectively, will be denoted by $X_{\rm (t1)}^0$ and $X_{\rm (t1)}^1$. 
% Note that $X_{\rm (t1)}^0$ is a torus, 
% and $X_{\rm (t1)}^1$ is a Klein bottle $X_{\rm (k1)}$. 
% The simple polyhedra encoded by the graph (a15) with cohomology classes $0$ and $1$ 
% and that by the graph (a17) with cohomology classes $0$ and $1$ are 
% denoted by $X_{\rm (a15)}^0,X_{\rm (a15)}^1,X_{\rm (a17)}^0$ and $X_{\rm (a17)}^1$, 
% respectively. 
Let $X_{\rm (a15)}^0$ and $X_{\rm (a15)}^1$ be simple polyhedra described in Figures~\ref{fig:a15}-(i) and -(ii), respectively, 
which are encoded by the graph shown in Figures~\ref{fig:c=1/2}-(a15). 
% - - - - - - - - - - - - - - - - -
\begin{figure}[tbp]
\labellist
\footnotesize\hair 2pt
\pinlabel $a$ [Bl]  at 121.10 47.36
\pinlabel $a$ [Bl]  at 333.69 47.36
\pinlabel $a$ [B]   at  75.74 53.03
\pinlabel $a$ [B]   at 288.34 53.03
\pinlabel $a$ [Br]  at  30.39 47.36
\pinlabel $a$ [Br]  at 242.99 47.36
\pinlabel (i)  [Br] at  19.05  81.3
\pinlabel (ii) [Br] at 231.65 81.37
\endlabellist
\centering
\includegraphics[width=.6\hsize]{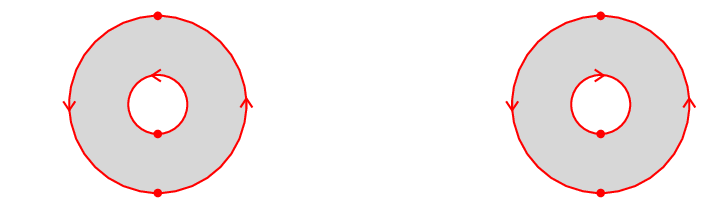}
\caption{Simple polyhedra (i) $X_{\rm (a15)}^0$ and (ii) $X_{\rm (a15)}^1$.}
\label{fig:a15}
\end{figure}
% - - - - - - - - - - - - - - - - -
Let $X_{\rm (a17)}^0$ be a simple polyhedron obtained from a torus by gluing a $2$-disk along its boundary to a meridian of the torus. 
We also define $X_{\rm (a17)}^1$ as a simple polyhedron obtained from Klein bottle by gluing a $2$-disk along its boundary to a simple closed curve representing $x$ in the fundamental group $\langle x,y\mid xyxy^{-1} \rangle$. 
Both $X_{\rm (a17)}^0$ and $X_{\rm (a17)}^1$ are simple polyhedra encoded by the graph shown in Figures~\ref{fig:c=1/2}-(a17). 
% - - - - - - - - - - - - - - - - - - - - - - - - - - - - - - - - - - - - - - - - -
\subsection{Useful facts}
Here we state some useful facts about shadows of closed $4$-manifolds and the elementary ideals of finitely generated free abelian groups. 
\begin{lemma}
[Costantino {\cite[Lemma 3.12]{Cos06b}}]
\label{lem:Costantino}
Let $X$ be a simple polyhedron. 
If $H_2(X)=0$ and $\mathrm{tor}H_1(X)\ne0$, 
then $\partial M_{(X,\gl)}$ is not homeomorphic to $k(S^1\times S^2)$ 
for any gleam $\gl$ and integer $k$, 
especially, $X$ is not a shadow of any closed $4$-manifold. 
\end{lemma}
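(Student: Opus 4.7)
The plan is to exploit Poincar\'e--Lefschetz duality together with the long exact sequence of the pair $(M,\partial M)$, where $M:=M_{(X,\gl)}$, in order to detect a torsion subgroup of $H_1(\partial M)$ forced by $\mathrm{tor}\,H_1(X)$. By Turaev's construction the thickening $M$ is a compact oriented $4$-manifold that collapses onto $X$, so $M\simeq X$ and $H_i(M;\Z)\cong H_i(X;\Z)$ for all $i$, while $H_i(M;\Z)=0$ for $i\geq 3$ because $X$ is $2$-dimensional.

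Next I would compute $H_2(M,\partial M;\Z)$ explicitly. By Lefschetz duality and the universal coefficient theorem,
\[
H_2(M,\partial M;\Z)\;\cong\;H^2(M;\Z)\;\cong\;H^2(X;\Z)\;\cong\;\mathrm{Hom}(H_2(X),\Z)\oplus\mathrm{Ext}(H_1(X),\Z).
\]
The hypothesis $H_2(X)=0$ kills the first summand, and since $H_1(X)$ is finitely generated the Ext term is isomorphic to $\mathrm{tor}\,H_1(X)$, which is nontrivial by assumption. Hence $H_2(M,\partial M;\Z)\cong \mathrm{tor}\,H_1(X)\neq 0$.

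Feeding this into the long exact sequence of the pair, the portion
\[
H_2(M;\Z)\longrightarrow H_2(M,\partial M;\Z)\longrightarrow H_1(\partial M;\Z)
\]
becomes $0\to \mathrm{tor}\,H_1(X)\hookrightarrow H_1(\partial M;\Z)$, so $H_1(\partial M;\Z)$ must contain a nonzero torsion subgroup. However $H_1(k(S^1\times S^2);\Z)\cong\Z^k$ is torsion-free for every $k\in\Z_{\geq 0}$, contradicting $\partial M\cong k(S^1\times S^2)$. The last clause of the lemma is then immediate from Definition~\ref{def:shadow}: if $X$ were a shadow of a closed $4$-manifold $W$, then the complement of a regular neighborhood of $X$ in $W$ would be $k(S^1\times B^3)$, forcing $\partial M_{(X,\gl)}\cong k(S^1\times S^2)$ for the canonical choice of gleam, which has just been ruled out.

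The main delicate point I expect to have to check is the orientability of $M$, which is required to apply integral Lefschetz duality. This should follow directly from the integrality condition $\gl(R)+\tfrac12\gl_2(R)\in\Z$ built into the definition of a gleam, which is precisely the obstruction to orienting Turaev's $4$-dimensional thickening; once this is recorded, the remainder of the argument is a formal chase through the universal coefficient theorem and the long exact sequence of the pair $(M,\partial M)$.
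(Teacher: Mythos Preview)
Your argument is correct. Note, however, that the paper does not supply its own proof of this lemma: it is quoted verbatim from Costantino \cite[Lemma~3.12]{Cos06b} and used as a black box, so there is nothing in the present paper to compare your approach against. Your proof via Lefschetz duality, the universal coefficient theorem, and the long exact sequence of $(M,\partial M)$ is the standard and natural one, and is essentially what Costantino does.

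One small comment on your final paragraph: you need not treat the orientability of $M_{(X,\gl)}$ as a point to be checked separately. The paper's standing conventions (Subsection~2.1) declare all manifolds to be oriented, and Turaev's thickening construction, as stated in the paper, produces an oriented $4$-manifold precisely because the half-integer gleam condition $\gl(R)+\tfrac12\gl_2(R)\in\Z$ has been imposed. So integral Lefschetz duality is available without further justification, and the rest of your chase is routine.
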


Martelli classified all the closed $4$-manifolds with $\shco=0$ and finite fundamental group in \cite{Mar11}. 
The following is a partial result of him. 
\begin{theorem}
[Martelli {\cite[Theorem 1.7]{Mar11}}]
\label{thm:finite_pi1}
A closed $4$-manifold $W$ has shadow-complexity $0$ and $|\pi_1(W)|\leq3$ 
if and only if $W$ is diffeomorphic to 
\[
W'\#h(S^2\times S^2)\#k\CP\#l\mCP
\]
for some $h,k,l\in\Z$, where $W'$ is $S^4$, $\mathcal{S}_2$, $\mathcal{S}'_2$ or $\mathcal{S}_3$. 
\end{theorem}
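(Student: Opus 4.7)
The plan is to prove the two directions separately, with the ``only if'' direction carrying all the real work. The overall strategy for the hard direction is to start with a shadow $X$ of $W$ having no true vertices (guaranteed by $\shco(W)=0$), analyze it via the encoding-graph machinery of Subsection~\ref{subsec:Encoding_graph}, and then use the hypothesis $|\pi_1(W)|\leq 3$ to cut the combinatorics down to a finite checklist.

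For the ``if'' direction, I would first exhibit explicit shadows with no true vertices for each of $S^4, \mathcal{S}_2, \mathcal{S}'_2, \mathcal{S}_3$. The sphere works for $S^4$; for $\mathcal{S}_p$ and $\mathcal{S}'_p$, a single vertex of type $Y_{12}$ (together with a disk cap) with an appropriate half-integer gleam realizes the surgery on $S^1\times S^3$ along a $p$-fold curve, giving $\pi_1\cong\Z/p\Z$. Then I would observe that connect sums with $S^2\times S^2$, $\CP$, or $\mCP$ are realized at the shadow level by the construction preceding Proposition~\ref{prop:connected_sum_ineq} (glue along disks in regions), which adds no true vertices, and that these summands are simply connected, so $|\pi_1|$ is unchanged.

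For the ``only if'' direction, let $X$ be a shadow of $W$ with $c(X)=0$. If $S(X)=\emptyset$, then Lemma~\ref{lem:closed_surf} and the surface-with-boundary case of Proposition~\ref{prop:thm_case1} force $W\in\{S^4,\CP,\mCP,h(S^1\times S^3)\}$; the constraint $|\pi_1(W)|\leq 3$ then kills all but $h=0$, so $W\in\{S^4,\CP,\mCP\}$, which fits the stated form with $W'=S^4$. If $S(X)\neq\emptyset$, then every component of $S(X)$ is a circle, and the encoding graph of $X$ is built from the seven vertex types $B,Y_{111},Y_{12},Y_3,D,P,Y_2$. From this decomposition together with the gleam one writes down a presentation of $\pi_1(W)$ (using the cell structure of $X$ and the $2$-handles corresponding to internal regions capped off by $W\setminus\Nbd(X;W)=h(S^1\times B^3)$). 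I would then enumerate those encoding-graph-plus-gleam configurations whose resulting group has order $\leq 3$, and for each configuration identify the diffeomorphism type of $W$. Two geometric reductions keep the enumeration tractable: a disk region with gleam $\pm 1$ adjacent to the rest of $X$ along a separating circle splits off a $\pm\CP$ summand, and certain pairs of disk regions split off an $S^2\times S^2$ summand; iterating these ``shadow-connected-sum'' moves reduces any candidate $X$ to a short list of irreducible models.

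The principal obstacle is showing that this reduction terminates in exactly the four pieces $S^4,\mathcal{S}_2,\mathcal{S}'_2,\mathcal{S}_3$ and no others. Controlling this requires (a) ruling out large families of candidate shadows by Lemma~\ref{lem:Costantino}, which excludes any $X$ with $H_2(X)=0$ and nontrivial torsion in $H_1(X)$, and (b) a careful case analysis of small encoding graphs (at most one or two non-disk, non-boundary vertices) to verify that the only closed $4$-manifolds with $|\pi_1|\in\{2,3\}$ they produce are $\mathcal{S}_2,\mathcal{S}'_2,\mathcal{S}_3$. Once the irreducible list is established and every reduction step is recognized as a connect sum with $S^2\times S^2$, $\CP$, or $\mCP$, the stated decomposition $W\cong W'\#h(S^2\times S^2)\#k\CP\#l\mCP$ follows by reading off the reduction history.
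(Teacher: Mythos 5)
You should first note that the paper does not prove this statement at all: it is quoted, with attribution, as Theorem~1.7 of Martelli's paper \cite{Mar11} and is used as a black box in Section~5. So there is no proof in the paper to match your argument against; the honest comparison is with Martelli's own classification of shadow-complexity-zero $4$-manifolds, which occupies a long paper in its own right. Your sketch follows the general spirit of that work (decompose a vertex-free shadow into the blocks $B,Y_{111},Y_{12},Y_3,D,P,Y_2$, split off standard summands, reduce to a short list), but as a proof it has genuine gaps.

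The ``if'' direction is essentially fine (exhibit vertex-free shadows and take shadow connected sums as before Proposition~\ref{prop:connected_sum_ineq}), except that your claim that a single $Y_{12}$ piece capped by a disk realizes $\mathcal{S}_p$ for arbitrary $p$ is not right: the local models only produce attaching circles running over a singular circle with multiplicity $\le 3$, and even for $p=2,3$ one must check that a suitable gleam makes the boundary of the thickening $k(S^1\times S^2)$ --- the paper itself does this with the specific polyhedra $X_{\rm (a9)}$ and $X_{\rm (m5)}$ and explicit gleams. The serious gap is the ``only if'' direction. A shadow with no true vertices can have an encoding graph of arbitrarily large size and gleams ranging over an infinite set, so ``enumerate the encoding-graph-plus-gleam configurations whose resulting group has order $\le 3$'' is not a finite checklist, and $\pi_1(W)$ is not read off from $X$ alone in a controlled way: $W$ is obtained by capping $\partial M_X$ with $k(S^1\times B^3)$, so one must first decide, gleam by gleam, when $\partial M_X\cong k(S^1\times S^2)$ (these boundaries are graph manifolds, and this Dehn-filling analysis is the technical heart of Martelli's proof) and then identify the resulting closed manifold. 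Likewise, your reduction moves (splitting off $\pm\CP$ from a gleam-$\pm1$ disk, or $S^2\times S^2$ from suitable pairs) need a proof that they always apply, are compatible with the gleam, and terminate in exactly the four irreducible pieces $S^4,\mathcal{S}_2,\mathcal{S}'_2,\mathcal{S}_3$; Lemma~\ref{lem:Costantino} excludes some torsion examples but comes nowhere near making this case analysis finite. As written, the proposal is a plausible outline of Martelli's strategy rather than a proof, and the paper's actual ``proof'' is simply the citation.
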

% \begin{lemma}
% \label{lem:S^1-bundle}
% An orientable $S^1$-bundle over a closed surface $\Sigma_{g}$ of genus $g$ 
% with Euler number $e$ is not homeomorphic to $k(S^1\times S^2)$ 
% unless $(g,e,k)=(0,0,1)$ or $(0,1,0)$.
% \end{lemma}
\begin{lemma}
\label{lem:elementary_ideal}
For any non-negative integer $k$, 
the $d$-th elementary ideal of $\pi_1(k(S^1\times S^2))$ is 
isomorphic to $(0)$ if $d<k$, and $(1)=\Z[t^{\pm1}_1,\ldots,t^{\pm1}_k]$ if $k\leq d$. 
\end{lemma}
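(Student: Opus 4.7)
The plan is to reduce the statement to an immediate computation by choosing the tautological presentation of the free group. First I would identify $\pi_1(k(S^1\times S^2))$ as the free group $F_k$ on $k$ generators. This follows from a standard van Kampen argument using $\pi_1(S^1\times S^2)=\Z$ together with the fact that the fundamental group of a connected sum of closed $4$-manifolds is the free product of the fundamental groups of the summands, since the gluing $3$-sphere is simply connected. Consequently $\pi_1(k(S^1\times S^2))\cong F_k$, whose abelianization is $\Z^k$ with generators $t_1,\ldots,t_k$, and the relevant group ring is $\Z[t_1^{\pm1},\ldots,t_k^{\pm1}]$.

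Next I would take the tautological presentation $\langle x_1,\ldots,x_k\mid\ \rangle$ of $F_k$, which has $n=k$ generators and $m=0$ relators, and consider the Alexander matrix obtained via Fox calculus followed by the abelianization map $\Z F_k\to\Z[t_1^{\pm1},\ldots,t_k^{\pm1}]$, $x_i\mapsto t_i$. Since there are no relators at all, this Alexander matrix is the (unique) empty $0\times k$ matrix.

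By definition, the $d$-th elementary ideal is generated by the $(k-d)\times(k-d)$ minors of this matrix. For $d<k$ the required minors have positive size, but the matrix has no rows, so there are none and by convention $E_d=(0)$. For $d\geq k$ the required minors have size at most $0$, and the determinant of an empty minor equals $1$, giving $E_d=(1)=\Z[t_1^{\pm1},\ldots,t_k^{\pm1}]$.

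The only (minor) obstacle is pinning down the standard conventions for minors of non-positive size (equal to $1$) and for minors whose size exceeds the number of available rows (none exist, so the ideal is zero). Once these are fixed, the conclusion is immediate, and no further analysis is required; in particular, the invariance of elementary ideals under change of presentation makes the choice of the trivial presentation legitimate.
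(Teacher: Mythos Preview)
Your argument is correct, with one small dimensional slip: $S^1\times S^2$ is a closed $3$-manifold, not a $4$-manifold, so the connected sum is performed along $S^2$ rather than $S^3$. Since $S^2$ is also simply connected, your van Kampen conclusion $\pi_1(k(S^1\times S^2))\cong F_k$ is unaffected, and the rest of the computation from the tautological presentation $\langle x_1,\ldots,x_k\mid\ \rangle$ goes through exactly as you describe.

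As for the comparison: the paper does not actually prove this lemma. It is recorded among the ``useful facts'' in Subsection~5.2 and then invoked in the analysis of $X_{\rm (a17)}^1$ without further justification. Your proof via the empty $0\times k$ Alexander matrix and the standard conventions on minors of non-positive size is precisely the expected elementary verification, and it fills the gap the paper leaves.
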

% \begin{lemma}
% \label{lem:reduceY12}
% Suppose a clsoed $4$-manifold $W$ admits a shadow encoded by a graph shown in Figure~\ref{fig:reduceY12}-(i). 
% Then $W$ also admits a shadow encoded by at least one of graphs shown in Figure~\ref{fig:reduceY12}-(ii) and -(iii). 
% \end{lemma}
% % - - - - - - - - - - - - - - - - -
% \begin{figure}[tbp]
% \labellist
% \footnotesize\hair 2pt
% \pinlabel $G$ [B] at 38.89 18.43
% \pinlabel $G$ [B] at 180.62 18.43
% \pinlabel $G$ [B] at 294.01 18.43
% \pinlabel (i) [Br] at 19.05 21.26
% \pinlabel (ii) [Br] at  160.78 21.26
% \pinlabel (iii) [Br] at 274.17  21.26
% \endlabellist
% \centering
% \includegraphics[width=.7\hsize]{reduceY12}
% \caption{Encoding graph shown in (i) can be replaced that in (ii) or (iii).}
% \label{fig:reduceY12}
% \end{figure}
% % - - - - - - - - - - - - - - - - -
% - - - - - - - - - - - - - - - - - - - - - - - - - - - - - - - - - - - - - - - - -
\subsection{Non-existence}
In the following Lemmas~\ref{lem:a6-8_m3-4}, 
%\ref{lem:p1_t1_k1}, %\ref{lem:a3-5}, 
\ref{lem:a15} and \ref{lem:m2}, 
we will show that the simple polyhedra 
% $X_{\rm (p1)}$, $X_{\rm (t1)}^0$, $X_{\rm (t1)}^1$, $X_{\rm (k1)}$, 
$X_{\rm (a6)}$, 
$X_{\rm (a7)}$, $X_{\rm (a8)}$, $X_{\rm (a15)}^0$, $X_{\rm (a15)}^1$, %$X_{\rm (a17)}^0$, $X_{\rm (a17)}^1$, 
$X_{\rm (m3)}$ and $X_{\rm (m4)}$ are not shadows of closed $4$-manifolds. 
% \begin{lemma}
% \label{lem:p1_t1_k1}
% The simple polyhedra $X_{\rm (p1)}, X_{\rm (t1)}^0, X_{\rm (k1)}(=X_{\rm (t1)}^1)$ are not shadows of closed $4$-manifolds. 
% \end{lemma}
% \begin{proof}
% It follows directly from Lemma~\ref{lem:closed_surf}. 
% \end{proof}
\begin{lemma}
\label{lem:a6-8_m3-4}
The simple polyhedra $X_{\rm (a6)}, X_{\rm (a7)}$, $X_{\rm (a8)}$, $X_{\rm (a15)}^1$, $X_{\rm (m3)}$ and $X_{\rm (m4)}$
are not shadows of closed $4$-manifolds. 
\end{lemma}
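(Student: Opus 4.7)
The plan is to apply Costantino's Lemma~\ref{lem:Costantino} to each polyhedron in the list. That is, for each $X\in\{X_{\rm (a6)}, X_{\rm (a7)}, X_{\rm (a8)}, X_{\rm (a15)}^1, X_{\rm (m3)}, X_{\rm (m4)}\}$, I intend to verify the two conditions $H_2(X)=0$ and $\mathrm{tor}\,H_1(X)\ne 0$, after which the lemma immediately gives the non-existence of a closed $4$-manifold admitting $X$ as a shadow.

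The first step is to fix a CW structure on each polyhedron that is compatible with its decomposition into the local pieces $B$, $Y_{111}$, $Y_{12}$, $Y_3$, $D$, $P$, $Y_2$ read off from the encoding graph in Figure~\ref{fig:c=1/2}. Concretely, I would place one $0$-cell on each singular circle and one in each region, take the singular circles together with auxiliary arcs as $1$-cells, and take one $2$-cell for each region. The second step is to write down the resulting boundary $\partial_2:C_2(X)\to C_1(X)$. The three incidences at a $Y_{111}$ vertex have the same sign; the $Y_{12}$ vertex contributes two incidences with opposite signs due to the reflection $f_{12}$; the $Y_3$ vertex contributes a single incidence corresponding to the order $3$ monodromy; and the Möbius piece $Y_2$ produces a $2$-cell attached by a degree $2$ map. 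For $X_{\rm (a15)}^1$ the extra datum in Subsection~\ref{subsec:Encoding_graph} must be used, namely the non-trivial cocycle $\alpha\in H^1(G;\Z/2\Z)$ along the cycle in the encoding graph, which inserts an orientation-reversal in the gluing.

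The third step is a direct homology computation. In every case on the list, the non-orientability built into the polyhedron — a Möbius region for $X_{\rm (m3)}$ and $X_{\rm (m4)}$, the non-trivial cocycle of $X_{\rm (a15)}^1$, and the twist in a $Y_{12}$ or $Y_3$ piece sitting on the cycle of the graph for $X_{\rm (a6)}, X_{\rm (a7)}, X_{\rm (a8)}$ — forces a non-trivial $\Z/2$ (or $\Z/3$) summand in $H_1(X)$, while the corresponding $2$-chains have non-trivial boundary, so $H_2(X)=0$. Once both conditions are in hand, Lemma~\ref{lem:Costantino} concludes the proof. The main obstacle is the careful bookkeeping of signs and multiplicities of $\partial_2$, particularly the combined effect of the $Y_{12}$/$Y_3$ monodromies and the choice of cocycle on the cycles of the encoding graphs; once these attaching data are pinned down, the Smith normal form computation of $H_1$ and $H_2$ in each case is elementary.
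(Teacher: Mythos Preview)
Your plan matches the paper's proof exactly: verify $H_2(X)=0$ and $\mathrm{tor}\,H_1(X)\ne 0$ for each listed polyhedron and invoke Lemma~\ref{lem:Costantino}; the paper simply records the outcome ($H_2=0$ in every case, with $H_1$ equal to $\Z/3$, $\Z/3$, $\Z/6$, $\Z/3$, $\Z/2$, $\Z/4$ respectively) without writing out the chain complex. Two small slips in your heuristic, though neither threatens the argument: the encoding graphs (a6), (a7), (a8) are trees---only (a15) and (a17) contain a cycle---so the torsion there comes from the $Y_3$/$Y_{12}$ pieces themselves rather than from monodromy along a cycle of the graph; and $H_1(X_{\rm (m4)})\cong\Z/4$ has $2$-torsion but no $\Z/2$ \emph{summand}.
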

\begin{proof}
The second homology groups of 
simple polyhedra $X_{\rm (a6)}, X_{\rm (a7)}$, $X_{\rm (a8)}$, $X_{\rm (a15)}^1$, $X_{\rm (m3)}$ and $X_{\rm (m4)}$ all vanish, 
and their first homology groups are $\Z/3\Z$, $\Z/3\Z$, $\Z/6\Z$, $\Z/3\Z$, $\Z/2\Z$ and $\Z/4\Z$, respectively. 
Hence, the lemma follows from Lemma~\ref{lem:Costantino}. 
\end{proof}
\begin{lemma}
\label{lem:a15}
The simple polyhedron $X_{\rm (a15)}^0$ 
is not a shadow of closed $4$-manifolds. 
\end{lemma}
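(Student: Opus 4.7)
The plan is to rule out $X_{\rm (a15)}^0$ as a shadow by analyzing the boundary of every possible $4$-dimensional thickening and showing it is never diffeomorphic to $k(S^1\times S^2)$. Since Lemma~\ref{lem:a6-8_m3-4} already handled the sibling $X_{\rm (a15)}^1$ via Costantino's Lemma~\ref{lem:Costantino}, the version $X_{\rm (a15)}^0$ must fail the hypotheses of that lemma, so I expect $H_2(X_{\rm (a15)}^0)\ne 0$ (or $H_1$ to be torsion-free), and a finer invariant is required. The natural candidate is the elementary ideals of $\pi_1(\partial M)$, which are pinned down for the target manifolds by Lemma~\ref{lem:elementary_ideal}.

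First I would describe $X_{\rm (a15)}^0$ explicitly from its encoding graph in Figure~\ref{fig:c=1/2}-(a15) together with the trivial cocycle $\alpha=0\in H^1(G;\Z/2\Z)$ selecting version (i) of Figure~\ref{fig:a15}. This produces a CW structure with no true vertices, exactly one internal annular region, and the remaining regions disks. I would then compute $H_\ast(X_{\rm (a15)}^0)$ directly from this cell structure to verify that Costantino's lemma indeed does not apply.

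Next, for an arbitrary gleam function $\gl$ on $X_{\rm (a15)}^0$ (a half-integer on each internal region, subject to the $\gl_2$-parity constraint), I would apply the Kirby diagram recipe of Section~4 to build a handle decomposition of $M_{(X_{\rm (a15)}^0,\gl)}$: edges of $\tilde\Gamma$ outside a spanning tree give dotted $1$-handles, and each region contributes a $2$-handle whose framing is read off from $\gl(R)$. From this Kirby diagram I would extract a presentation of $\pi_1(\partial M_{(X_{\rm (a15)}^0,\gl)})$, abelianize over $\Z[t_1^{\pm 1},\dots,t_h^{\pm 1}]$ where $h$ is the number of $1$-handles, and obtain an explicit presentation matrix whose entries are Laurent polynomials with coefficients depending linearly on the gleams.

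Finally, if $\partial M\cong k(S^1\times S^2)$, then by Lemma~\ref{lem:elementary_ideal} the $d$-th elementary ideal must be $(0)$ for $d<k$ and $(1)$ for $d\ge k$. I would compute the relevant determinantal ideals of the presentation matrix and exhibit an arithmetic obstruction, for example a nonzero Laurent polynomial divisor that can never be made a unit by any choice of gleam, which contradicts the required jump pattern. The main obstacle will be the uniformity across gleams: because $\gl$ varies through a two-parameter family of half-integers (one per region), the relators depend on these parameters, and I must identify a single ideal-theoretic invariant that is wrong for every gleam simultaneously rather than checking cases one by one.
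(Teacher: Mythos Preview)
Your plan is reasonable and in fact mirrors what the paper does later for $X_{\rm (a17)}^1$, but the paper's own proof here takes a much shorter and qualitatively different route that avoids any gleam-by-gleam boundary analysis. The paper observes that $\pi_1(X_{\rm (a15)}^0)\cong\langle x,y\mid xyx^{-1}y^{-2}\rangle$ is not cyclic, and from the Kirby diagram (two dotted circles and a single $m$-framed $2$-handle) reads off $H_1(\partial M)\cong\Z$ for every gleam, forcing $k=1$ if $\partial M\cong k(S^1\times S^2)$. The resulting closed $W$ would then carry a handle decomposition with one $0$-handle, two $1$-handles, one $2$-handle, one $3$-handle and one $4$-handle; turning it upside down yields a decomposition with a \emph{single} $1$-handle, so $\pi_1(W)$ would be cyclic, a contradiction. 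The argument never touches $\pi_1(\partial M)$ beyond its abelianization and uses no Alexander-type invariants at all.

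Your elementary-ideal approach could in principle succeed, but it carries exactly the risk you flag: you must exhibit an ideal-theoretic obstruction valid for every gleam, and nothing guarantees that the elementary ideals of $\pi_1(\partial M)$ fail the pattern of Lemma~\ref{lem:elementary_ideal} for all $m$. The paper's argument shows that the genuine obstruction lives in $\pi_1(W)=\pi_1(X)$ and is detected by the crude handle count on the dual decomposition, not by any invariant of $\partial M$ finer than $H_1$. So your route may well go through after an explicit computation, but it is longer and its success is not assured by the proposal as written, whereas the paper's duality trick is both simpler and immediately conclusive.
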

\begin{proof}
% We first consider $X_{\rm (a15)}^0$. 
% We equip arbitrary gleam with $X_{\rm (a15)}^0$, 
% and let $M$ denote its $4$-dimensional thickening. 
% Figure~\ref{fig:Kirby_a15} shows a Kirby diagram of $M$, where $m$ is some integer. 
% Therefore, the fundamental group of $\partial M$ is 
% \[
% \langle
% x,y,z
% \mid
% xzx^{-1}z^{-1},x^{n-1}y^{-1}zyxy^{-1}zyz^{-1}, yx^{-1}y^{-1}xzyx^{-1}y^{-1}z^{-1}
% \rangle, 
% \]
% and hence the $d$-th elementary ideal $E_d\subset\Z[t^{\pm1}]$ is as follows:
% \begin{align*}
% E_d=\begin{cases}
% (0) & (d=0) \\
% (2t^{-1}-5+2t) & (d=1)\\
% (m,2t-1,2-t) & (d=2)\\
% (1) & (d\geq3).
% \end{cases}
% \end{align*}
% It can not coincide with that of $k(S^1\times S^2)$ for any $k$ and $m$, 
% and hence $X_{\rm (a15)}^0$ is not a shadow of any closed $4$-manifold.
Suppose that there exists a closed $4$-manifold $W$ admitting a shadow $X_{\rm (a15)}^0$. 
Note that $\pi_1(W)\cong \pi_1(X_{\rm (a15)}^0)\cong \langle x,y\mid xyx^{-1}y^{-2}\rangle$, 
which is not a cyclic group. % Baumslag–Solitar Groups
Set $M=\Nbd(X_{\rm (a15)}^0;W)$. 
Its Kirby diagram is shown in the left part of Figure~\ref{fig:Kirby_a15} for some $m\in\Z$. 
Then we have $H_1(\partial M)=\Z$, and hence $\partial M$ must be $S^1\times S^2$. 
Therefore, 
$W$ admits a handle decomposition consisting of one $0$-handle, $2$ $1$-handles, 
one $2$-handles, one $3$-handle and one $4$-handle. 
Considering the dual decomposition, 
we see that $\pi_1(W)$ is generated by one element, 
which is a contradiction. 
% We next consider $X_{\rm (a15)}^1$. 
% A Kirby diagram of its $4$-dimensional thickening is 
% depicted in the right part of Figure~\ref{fig:Kirby_a15}. 
% The second homology vanish, and first homology is isomorphic to $\Z/3\Z$. 
% Therefore, it can not be a shadow of closed $4$-manifold by Lemma~\ref{lem:Costantino}. 
\end{proof}

% - - - - - - - - - - - - - - - - -
\begin{figure}[tbp]
\begin{tabular}{ccc}
\centering
\begin{minipage}[b]{.6\hsize}
\labellist
\footnotesize\hair 2pt
\pinlabel $m$ [Bl] at  85.78 25.51
\pinlabel $m$ [Bl] at  227.51 25.51
\endlabellist
\centering
\includegraphics[width=.7\hsize]{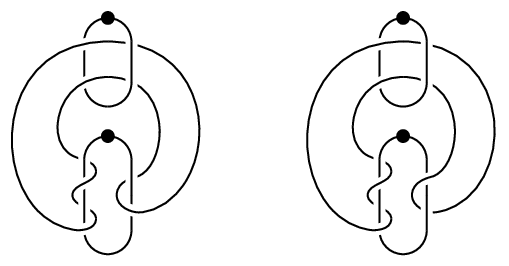}
\caption{The left and right diagrams are Kirby diagrams of $4$-dimensional thickenings of the simple polyhedra $X_{\rm (a15)}^0$ and $X_{\rm (a15)}^1$, respectively.}
\label{fig:Kirby_a15}
\end{minipage}
\begin{minipage}[c]{.03\hsize}
 ${}$
\end{minipage}
\begin{minipage}[b]{.3\hsize}
\labellist
\footnotesize\hair 2pt
\pinlabel $m$ [B] at  62.36 7.92
\endlabellist
\centering
\includegraphics[width=.8\hsize]{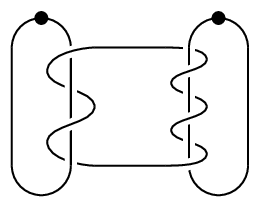}
\caption{A Kirby diagram of a $4$-dimensional thickening of the simple polyhedron $X_{\rm (m2)}$.}
\label{fig:Kirby_m2}
\end{minipage}
\end{tabular}
\end{figure}
% - - - - - - - - - - - - - - - - -

\begin{lemma}
\label{lem:m2}
The simple polyhedron $X_{\rm (m2)}$ is not a shadow of closed $4$-manifolds. 
\end{lemma}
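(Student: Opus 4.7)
The plan is to follow the strategy used in Lemma~\ref{lem:a15}. From the cell structure encoded by the graph in Figure~\ref{fig:c=1/2}-(m2), I would first compute the fundamental group $\pi_1(X_{\rm (m2)})$ explicitly and verify that it is not a cyclic group. Since any closed $4$-manifold $W$ admitting $X_{\rm (m2)}$ as a shadow satisfies $\pi_1(W) \cong \pi_1(X_{\rm (m2)})$, this would force $\pi_1(W)$ to be non-cyclic.

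Next, using the Kirby diagram of the thickening $M = \Nbd(X_{\rm (m2)}; W)$ displayed in Figure~\ref{fig:Kirby_m2}, whose unique $2$-handle has framing $m \in \Z$ determined by an admissible gleam, I would compute $H_1(\partial M)$ directly from the linking matrix. The shadow condition forces $\partial M$ to be diffeomorphic to $k(S^1 \times S^2)$ for some $k \in \Z_{\geq 0}$, and the computation should pin down $k \leq 1$ regardless of $m$. Consequently, $W$ admits a handle decomposition with a single $0$-handle, the $1$- and $2$-handles visible in Figure~\ref{fig:Kirby_m2}, at most one $3$-handle, and a single $4$-handle.

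Passing to the dual handle decomposition, $\pi_1(W)$ is generated by the cocores of the $3$-handles of the original decomposition, of which there is at most one. Hence $\pi_1(W)$ would be cyclic, contradicting the non-cyclicity established in the first step; therefore no such $W$ can exist.

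The main obstacle will be carrying out the $H_1(\partial M)$ computation uniformly in the framing $m$: the $\Z_2$-gleam condition coming from the M\"obius band region $Y_2$ restricts which integers $m$ are admissible, and for each admissible value I must verify that $\mathrm{rank}\, H_1(\partial M) \leq 1$. Ruling out the possibility that, for some exceptional $m$, the $2$-handle algebraically cancels a $1$-handle and so allows $k \geq 2$ is the delicate point, analogous to the framing bookkeeping performed in the proof of Lemma~\ref{lem:a15}.
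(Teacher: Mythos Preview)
Your proposal is correct and follows essentially the same approach as the paper: the paper computes $\pi_1(X_{\rm (m2)})\cong\langle x,y\mid x^2y^3\rangle$, notes it is non-cyclic, refers to the Kirby diagram in Figure~\ref{fig:Kirby_m2}, and then explicitly says the rest proceeds ``in much the same way as Lemma~\ref{lem:a15}'', i.e.\ via the $H_1(\partial M)$ computation and the dual-handle-decomposition cyclicity contradiction that you outlined. Your caution about exceptional framings is unnecessary here---as in Lemma~\ref{lem:a15}, one gets $H_1(\partial M)\cong\Z$ for every admissible $m$, so $k=1$ outright.
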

\begin{proof}
Suppose that there exists a closed $4$-manifold $W$ admitting a shadow $X_{\rm (m2)}$. 
Note that $\pi_1(W)\cong \pi_1(X_{\rm (m2)})\cong \langle x,y\mid x^2y^3\rangle$, which is not cyclic. 
Set $M=\Nbd(X_{\rm (m2)};W)$. 
Its Kirby diagram is depicted in Figure~\ref{fig:Kirby_m2}. 
Then 
% we have $H_1(\partial M)=\Z$, and hence $\partial M$ must be $S^1\times S^2$. 
% Therefore, 
% $W$ admits a handle decopmposition consisting of one $0$-handle, $2$ $1$-handles, 
% one $2$-handles, one $3$-handle and one $4$-handle. 
% Considering the dual decomposition, 
% we see that $\pi_1(W)$ is generated by one element, 
% which is a contradiction. 
the lemma can be proved in much the same way as Lemma~\ref{lem:a15}. 
\end{proof}

% - - - - - - - - - - - - - - - - - - - - - - - - - - - - - - - - - - - - - - - - -
\subsection{Classification}
\begin{lemma}
The simple polyhedra $X_{\rm (a1)}$ and $X_{\rm (a2)}$ are shadows only of $S^1\times S^3$. 
\end{lemma}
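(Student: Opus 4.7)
The plan is to follow the strategy used in Lemmas~\ref{lem:a15} and \ref{lem:m2}: compute the fundamental group, draw a Kirby diagram of the 4-dimensional thickening, constrain the gleam by demanding $\partial M \cong k(S^1\times S^2)$, and then identify the resulting closed manifold.

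First I would describe $X_{\rm (a1)}$ and $X_{\rm (a2)}$ explicitly from their encoding graphs in Figure~\ref{fig:c=1/2}. Because each of these graphs is a tree (so no ambiguity of cocycle as in Subsection~\ref{subsec:Encoding_graph}), the polyhedra are determined up to homeomorphism. A direct computation from the CW structure would then give
\[
\pi_1(X_{\rm (a1)})\cong\pi_1(X_{\rm (a2)})\cong \Z, \qquad H_2(X_{\rm (a1)})=H_2(X_{\rm (a2)})=0,
\]
the generator of $\pi_1$ coming from the core loop of the annular region. Since a shadow is a $2$-skeleton of the ambient $4$-manifold, any closed $4$-manifold $W$ admitting $X_{\rm (a1)}$ or $X_{\rm (a2)}$ as a shadow satisfies $\pi_1(W)\cong\Z$.

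Next I would use the procedure from Section~\ref{subsec:Kirby_diagrams_to_trisections} to draw a Kirby diagram of the $4$-dimensional thickening $M_{(X_i,\mathrm{gl})}$ for each admissible gleam, with $i\in\{1,2\}$. Since there are no true vertices and only one non-disk region, the diagram is extremely simple: it consists of one dotted circle (from the $1$-handle associated to the annular region or to an edge of $\tilde\Gamma\setminus T$) together with a small number of $2$-handles whose framings are determined by the gleam, analogous to Figures~\ref{fig:Kirby_a15} and \ref{fig:Kirby_m2}. A Mayer--Vietoris computation then shows $H_1(\partial M_{(X_i,\mathrm{gl})})\cong\Z$ for every gleam, so the shadow condition $\partial M\cong k(S^1\times S^2)$ forces $k=1$ and $\partial M\cong S^1\times S^2$.

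Closing up by attaching a single $S^1\times B^3$, the resulting handle decomposition of $W$ consists of one $0$-handle, one $1$-handle, a (possibly empty) collection of $2$-handles, one $3$-handle and one $4$-handle. Because $\pi_1(W)\cong\Z$ is already generated by the sole $1$-handle, the attaching curves of any $2$-handles are null-homotopic in $\partial(H^0\cup H^1)$ and, after slides over the $1$-handle to adjust framings, they can be cancelled against $3$-handles. This collapses the decomposition to the standard one for $S^1\times S^3$. Existence of at least one admissible gleam giving $S^1\times S^3$ is immediate, since $S^1\times S^3$ manifestly admits each $X_i$ as a shadow (the annular region can be realised as a meridian disk in $S^1\times S^3$ bounded by a circle on the standard $S^1\times S^2$).

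The main obstacle is to handle the gleam-dependence cleanly: a priori different gleams give non-diffeomorphic thickenings $M$, so one must argue that, once the boundary is normalised to $S^1\times S^2$, every admissible gleam produces the same closed $4$-manifold after the Laudenbach--Poénaru gluing. The cancellation argument above is what rescues us; it essentially says that the framing of the lone potential $2$-handle is irrelevant after attaching the $3$-handle, so all valid gleams give $W\cong S^1\times S^3$.
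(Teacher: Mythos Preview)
The paper's proof is a single observation that you have missed: neither $X_{\rm (a1)}$ nor $X_{\rm (a2)}$ has any \emph{internal} region (every region meets $\partial X$), so by definition there is no gleam function to choose and the $4$-dimensional thickening is therefore unique. Since each of these polyhedra collapses onto a circle, that unique thickening is $S^1\times B^3$; its boundary is $S^1\times S^2$, and capping off yields $S^1\times S^3$. The ``main obstacle'' you set out to overcome---gleam-dependence---simply does not arise here, so the Kirby-diagram and Mayer--Vietoris machinery is unnecessary.

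Beyond the unneeded complication, the final cancellation step in your outline is not valid as written. From $\pi_1(W)\cong\Z$ you correctly deduce that any $2$-handle attaching curve is null-homotopic in $\partial(H^0\cup H^1)\cong S^1\times S^2$, but the assertion that such $2$-handles ``can be cancelled against $3$-handles'' after slides over the $1$-handle is false in general: handle slides over a $1$-handle do not let you adjust the framing freely, and $2$/$3$-cancellation is a geometric condition (belt sphere meets attaching sphere transversely once), not a homotopical one. Indeed, the closed polyhedra $X_{\rm (a17)}^0$ and $X_{\rm (a17)}^1$ treated later in this section also satisfy $\pi_1\cong\Z$ and admit handle decompositions of exactly the shape you describe (one $1$-handle, one $2$-handle along a null-homotopic unknot, one $3$-handle), yet they are shadows of $(S^1\times S^3)\#\CP$ and $(S^1\times S^3)\#\mCP$ as well as of $S^1\times S^3$. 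Your argument, taken at face value, would force all of these to be $S^1\times S^3$. In the present lemma you are rescued only because there are in fact no internal regions and hence no $2$-handle whose framing is free to vary---but your write-up does not recognise this and instead relies on an invalid general principle.
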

\begin{proof}
The simple polyhedra $X_{\rm (a1)}$ and $X_{\rm (a2)}$ have 
unique $4$-dimensional thickenings, which are diffeomorphic to $S^1\times B^3$. 
Hence, they are only shadows of $S^1\times S^3$. 
\end{proof}
\begin{lemma}
\label{lem:a3-5}
If a closed 4-manifold $W$ admits a shadow homeomorphic to 
$X_{\rm (a3)}, X_{\rm (a4)}$ or $X_{\rm (a5)}$, 
then $\shco_{1/2}(W)=0$. 
\end{lemma}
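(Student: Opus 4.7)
The plan is, for each $X \in \{X_{\rm (a3)}, X_{\rm (a4)}, X_{\rm (a5)}\}$ and every closed $4$-manifold $W$ admitting $X$ as a shadow, to identify $W$ explicitly and verify that $W$ appears in the list of Theorem~\ref{thm:complexity0}, whence $\shco_{1/2}(W) = 0$. Since all three polyhedra have no true vertices and exactly one non-disk region, which is necessarily an annulus, the gleam data reduces to a single half-integer on the annulus (with parity fixed by its $\Z_2$-gleam) together with integers on the remaining disk regions; the space of shadowed polyhedra of each underlying shape is therefore highly constrained.

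First I would read off $\pi_1(X)$ and $H_\ast(X)$ directly from the encoding graph. Since any shadow $X$ of $W$ satisfies $\pi_1(W) \cong \pi_1(X)$ and since $H_\ast(W)$ is controlled by $H_\ast(X)$ via the complementary $k(S^1 \times B^3)$, this rules out at once every $W$ that is not simply connected with small second Betti number. In particular, unlike the polyhedra treated in Lemmas~\ref{lem:a6-8_m3-4}, \ref{lem:a15}, \ref{lem:m2}, the three polyhedra $X_{\rm (a3)}, X_{\rm (a4)}, X_{\rm (a5)}$ should satisfy $H_2(X) \ne 0$ or $\pi_1(X)$ free, so that Lemma~\ref{lem:Costantino} does not obstruct existence and each will indeed be realised as a shadow of some $W$.

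Second, following the recipe of Section~4, for each admissible gleam I would draw a Kirby diagram of the thickening $M_X = \Nbd(X;W)$, obtaining a dotted unlink $L_1$ together with an attaching link $L_2$ whose framings are dictated by the gleam and the immersion $\varphi$. The condition $\partial M_X \cong k(S^1\times S^2)$, required for $M_X$ to be cappable off to a closed $4$-manifold, pins down the gleam on the annulus up to finitely many possibilities. For each such possibility I would then perform Kirby calculus --- handle slides over the $1$-handles arising from the triple lines and the annulus, followed by cancellations --- to reduce the diagram to a standard one. Because the polyhedra are so small, each Kirby diagram has very few components and the reductions should be short, giving in every case one of $S^4$, $\pm\CP$, $S^2\times S^2$, $2\CP$, $\CP\#\mCP$, $2\mCP$.

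The main obstacle is simply the branching of the case analysis: three polyhedra times a handful of admissible gleams each, with framings on $L_2$ that depend sensitively on the annulus gleam. The bookkeeping of these framings through the Kirby moves is the one step that must be carried out by hand rather than by general principles. Once the identification with the manifolds of Theorem~\ref{thm:complexity0} is complete, the conclusion $\shco_{1/2}(W) = 0$ follows without further work.
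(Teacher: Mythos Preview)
Your approach is valid but far more laborious than necessary. The paper's proof is a single sentence: each of $X_{\rm (a3)}, X_{\rm (a4)}, X_{\rm (a5)}$ has a free face and collapses onto $S^2$, $\RP^2$, $S^2$ respectively; since collapsing a shadow yields another shadow of the same closed $4$-manifold, and these targets have $1/2$-weighted complexity $0$, the conclusion is immediate. (For $X_{\rm (a4)}$ one may equally read the argument as showing the hypothesis is vacuous, since $\RP^2$ is not a shadow of any closed $4$-manifold by Lemma~\ref{lem:closed_surf}; either way $\shco_{1/2}(W)=0$ holds for every $W$ in question.)

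What you propose---enumerating gleams, drawing Kirby diagrams, imposing $\partial M_X \cong k(S^1\times S^2)$, and reducing via Kirby moves---is exactly the machinery the paper deploys for the polyhedra that \emph{do not} collapse, such as $X_{\rm (a17)}^0$, $X_{\rm (a17)}^1$. For $X_{\rm (a3)}$--$X_{\rm (a5)}$ this machinery is overkill: the presence of $\partial X \ne \emptyset$ (which you did not exploit) means the boundary regions carry no gleam at all and can simply be collapsed away. Your method would still reach the answer and has the side benefit of naming each $W$ explicitly, but the paper's collapsing argument bypasses the entire case analysis you flag as the main obstacle.
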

\begin{proof}
The simple polyhedra $X_{\rm (a3)}, X_{\rm (a4)}$ and $X_{\rm (a5)}$, respectively, 
collapses onto $S^2$, $\RP^2$ and $S^2$, whose $1/2$-weighted complexities are $0$. 
\end{proof}
\begin{lemma}
The simple polyhedron $X_{\rm (a9)}$ is a shadow only of $\mathcal{S}_3$. 
\end{lemma}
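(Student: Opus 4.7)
The plan is to follow the same Kirby-diagram strategy used in Lemmas~\ref{lem:a15} and~\ref{lem:m2}. I would suppose that a closed $4$-manifold $W$ admits $X_{\rm (a9)}$ as a shadow, and analyze the $4$-dimensional thickening $M=\Nbd(X_{\rm (a9)};W)$. Since $X_{\rm (a9)}$ has no true vertices and contains exactly one annular region (whose $\Z_2$-gleam vanishes), a shadowed structure on $X_{\rm (a9)}$ is determined by a single integer parameter $m\in\Z$, namely the gleam of the annular region; the gleams of the remaining disk regions are constrained by the combinatorics of the encoding graph. Applying the algorithm of Subsection~4.1 to this shadowed polyhedron produces a Kirby diagram of $M$ with dotted circles coming from cycles of the encoding graph and $2$-handle framings that are explicit linear functions of $m$.

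Next, I would compute $H_1(\partial M)$ as a function of $m$ from the linking matrix. Since $W$ is closed, $\partial M\cong k(S^1\times S^2)$ for some $k\in\Z_{\geq0}$, so $H_1(\partial M)$ must be torsion-free of some rank $k$. This divisibility-type condition should single out a unique admissible gleam $m$ (and the corresponding $k$). By Laudenbach--Po\'enaru the closed $4$-manifold obtained by capping off $\partial M$ with $k(S^1\times B^3)$ is then determined up to diffeomorphism.

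Finally, I would identify the resulting $W$ with $\mathcal{S}_3$. One route is direct Kirby calculus: simplify the diagram of $W$ to the standard diagram of $\mathcal{S}_3$, which consists of a single dotted circle together with a $0$-framed $2$-handle attached along a curve representing $3$ times the generator of the free group. Alternatively, from the cell structure of $X_{\rm (a9)}$ and Van Kampen one computes $\pi_1(W)\cong\Z/3\Z$; then Martelli's Theorem~\ref{thm:finite_pi1} restricts $W$ to the form $W'\#h(S^2\times S^2)\#k\CP\#\ell\mCP$ with $W'\in\{\mathcal{S}_2,\mathcal{S}'_2,\mathcal{S}_3\}$, and matching Euler characteristic, first homology, and intersection form forces $h=k=\ell=0$ and $W'=\mathcal{S}_3$.

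The main obstacle will be the gleam bookkeeping: correctly tracking how $m$ enters the framings in the Kirby diagram and rigorously showing that exactly one integer $m$ yields $\partial M\cong k(S^1\times S^2)$. The Kirby-calculus identification with the standard diagram of $\mathcal{S}_3$ is more mechanical, but still requires care because handle slides depend on the sign and magnitude of the gleam.
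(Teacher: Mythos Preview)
Your primary Kirby-diagram plan contains a genuine error in the gleam bookkeeping. The polyhedron $X_{\rm (a9)}$ is closed and has three internal regions (two disks $R_1,R_2$ and one annulus $R_3$), and each of them carries an \emph{independent} integer gleam; the combinatorics of the encoding graph only fixes the $\Z_2$-gleams (all zero here), not the gleam values. Indeed the paper exhibits the specific choice $\gl(R_1)=1,\ \gl(R_2)=-1,\ \gl(R_3)=1$. So the family of thickenings $M$ is $3$-parameter, not $1$-parameter, and your proposed ``single admissible $m$'' analysis of $H_1(\partial M)$ would not go through as stated.

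Your alternative route, however, is exactly the paper's argument and is by far the cleaner one: from $\pi_1(X_{\rm (a9)})\cong\Z/3\Z$, $b_2(X_{\rm (a9)})=1$ and $c(X_{\rm (a9)})=0$, Martelli's Theorem~\ref{thm:finite_pi1} forces $W\cong\mathcal{S}_3\#h(S^2\times S^2)\#k\CP\#\ell\mCP$; the fundamental group pins down $W'=\mathcal{S}_3$, and the Betti-number/Euler-characteristic bookkeeping you mention then gives $h=k=\ell=0$. (One small point worth making explicit: since $H_1(\partial M_X)\to H_1(M_X)\cong\Z/3\Z$ is surjective by Lefschetz duality, the boundary cannot be $S^3$, which is what forces $b_2(W)=0$.) Finally, note that the lemma also asserts existence: you must check that some gleam on $X_{\rm (a9)}$ actually yields $\mathcal{S}_3$; the paper does this by exhibiting the gleam $(1,-1,1)$, a step your outline omits.
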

\begin{proof}
We have $\pi_1(X_{\rm (a9)})\cong \Z/3\Z$, $b_2(X_{\rm (a9)})=1$ and $c(X_{\rm (a9)})=0$. 
By Theorem~\ref{thm:finite_pi1}, if $X_{\rm (a9)}$ is a shadow of a closed $4$-manifold, 
it is nothing but $\mathcal{S}_3$. 
Actually, a gleam on $X_{\rm (a9)}$ defined by $\gl(R_1)=1,\gl(R_2)=-1$ and $\gl(R_3)=1$ 
provides $\mathcal{S}_3$, 
where $R_1$ and $R_2$ are two disk regions of $X_{\rm (a9)}$ and 
$R_3$ is a single annular region of $X_{\rm (a9)}$. 
\end{proof}
\begin{lemma}
\label{lem:a10}
If the simple polyhedron $X_{\rm (a10)}$ is a shadow of a closed $4$-manifold $W$, 
then $W$ is $\mathcal{S}_2$ or $\mathcal{S}_2'$. 
\end{lemma}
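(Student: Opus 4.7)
The plan is to mirror the preceding lemma on $X_{\rm (a9)}$. First I would read off the basic data of $X_{\rm (a10)}$ from its encoding graph in Figure~\ref{fig:c=1/2}-(a10): namely $c(X_{\rm (a10)})=0$ (which is automatic from $c_{1/2}=1/2$), $\pi_1(X_{\rm (a10)})\cong \Z/2\Z$ read off from the presentation coming from the CW-decomposition prescribed by the graph, and the value of $b_2(X_{\rm (a10)})$.

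Next, suppose $X_{\rm (a10)}$ is a shadow of a closed $4$-manifold $W$, so that $W=\Nbd(X_{\rm (a10)};W)\cup k'(S^1\times B^3)$ for some $k'\in\Z_{\geq0}$, glued along $k'(S^1\times S^2)$. Because the inclusion $\pi_1(k'(S^1\times S^2))\hookrightarrow \pi_1(k'(S^1\times B^3))$ is an isomorphism, van Kampen's theorem collapses the amalgamated product to $\pi_1(W)\cong \pi_1(X_{\rm (a10)})\cong \Z/2\Z$. Combined with $\shco(W)=0$ and $|\pi_1(W)|=2\leq 3$, Theorem~\ref{thm:finite_pi1} yields $W\cong W'\#h(S^2\times S^2)\#k\CP\#l\mCP$ for some $h,k,l\in\Z_{\geq0}$ with $W'\in\{S^4,\mathcal{S}_2,\mathcal{S}_2',\mathcal{S}_3\}$. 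The constraint $\pi_1(W)\cong \Z/2\Z$ immediately excludes $S^4$ and $\mathcal{S}_3$, leaving $W'\in\{\mathcal{S}_2,\mathcal{S}_2'\}$.

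To force $h=k=l=0$, I would apply Mayer--Vietoris to the decomposition $W=\Nbd(X_{\rm (a10)};W)\cup k'(S^1\times B^3)$: since $H_2(k'(S^1\times B^3))=0$ and $H_2(\partial \Nbd(X_{\rm (a10)};W))\cong \Z^{k'}$, the sequence presents $H_2(W)$ as a quotient of $H_2(X_{\rm (a10)})$ by the image of a $\Z^{k'}$. A complementary bound on $k'$ comes from Lemma~\ref{lem:elementary_ideal} applied to the elementary ideals of $\pi_1(\partial \Nbd(X_{\rm (a10)};W))\cong F_{k'}$, compared against the Alexander invariants computable from $\pi_1(X_{\rm (a10)})$. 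Combining these should pin $b_2(W)$ to $0$, hence $2h+k+l=0$, so $W\in\{\mathcal{S}_2,\mathcal{S}_2'\}$. Finally, to show the statement is sharp, I would exhibit explicit gleams on $X_{\rm (a10)}$ whose thickenings realize each of $\mathcal{S}_2$ and $\mathcal{S}_2'$, as was done for $\mathcal{S}_3$ in the $X_{\rm (a9)}$ lemma.

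The main obstacle will be the Mayer--Vietoris step: both computing $H_2(X_{\rm (a10)})$ precisely from its cell structure, and controlling the image of $H_2(\partial \Nbd(X_{\rm (a10)};W))$ under inclusion, uniformly across every gleam for which the thickening has boundary $k'(S^1\times S^2)$ and is therefore a shadow of a closed $4$-manifold.
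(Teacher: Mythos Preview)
Your strategy matches the paper's: the proof there is the single sentence ``It follows from $\pi_1(X_{\rm (a10)})\cong \Z/2\Z$, $b_2(X_{\rm (a10)})=1$, $c(X_{\rm (a10)})=0$ and Theorem~\ref{thm:finite_pi1}.'' Two points where you diverge are worth flagging.

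First, the elementary-ideal detour is confused and unnecessary. Lemma~\ref{lem:elementary_ideal} records the ideals of $\pi_1(k'(S^1\times S^2))$, but ``comparing against the Alexander invariants computable from $\pi_1(X_{\rm (a10)})$'' does not make sense: $\pi_1(X_{\rm (a10)})\cong\Z/2\Z$ carries no Alexander data, and $\pi_1(\partial M_X)$ depends on the gleam, not on $\pi_1(X)$. Your Mayer--Vietoris step already finishes the job without this. It shows $H_2(W)$ is a quotient of $H_2(X_{\rm (a10)})\cong\Z$, hence cyclic; on the other hand Poincar\'e duality together with universal coefficients forces $H_2(W)\cong\Z^{b_2(W)}\oplus\Z/2\Z$ since $H_1(W)\cong\Z/2\Z$, and this group is cyclic only when $b_2(W)=0$. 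That immediately yields $2h+k+l=0$, with no separate control on $k'$ required. This is the content hiding behind the paper's invocation of ``$b_2(X_{\rm (a10)})=1$''.

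Second, your final step---exhibiting explicit gleams on $X_{\rm (a10)}$ realizing each of $\mathcal{S}_2$ and $\mathcal{S}_2'$---is not part of the lemma and the paper does \emph{not} do it. The Remark immediately following Lemma~\ref{lem:a10} explicitly says the paper has not shown that either manifold actually arises from $X_{\rm (a10)}$; that $\shco_{1/2}(\mathcal{S}_2)=\shco_{1/2}(\mathcal{S}_2')=1/2$ is established instead via $X_{\rm (m5)}$ in Lemma~\ref{lem:m5}. So drop that step.
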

\begin{proof}
It follows from $\pi_1(X_{\rm (a10)})\cong \Z/2\Z$, $b_2(X_{\rm (a10)})=1$, $c(X_{\rm (a10)})=0$ and Theorem~\ref{thm:finite_pi1}. 
\end{proof}
\begin{remark}
The $1/2$-weighted shadow-complexities of $\mathcal{S}_2$ and $\mathcal{S}_2'$ are actually $1/2$ as shown in Lemma~\ref{lem:m5}. 
We have not proven in the proof of Lemma~\ref{lem:a10} 
that $\mathcal{S}_2$ or $\mathcal{S}_2'$ admits a shadow $X_{\rm (a10)}$, 
so we do not know at this moment if the $1/2$-weighted shadow-complexities of them are exactly $1/2$ or not. 
\end{remark}
\begin{lemma}
If a closed $4$-manifold $W$ admits a shadow $X$ 
homeomorphic to $X_{\rm (a11)}$, $X_{\rm (a12)}$, $X_{\rm (a13)}$ or $X_{\rm (a14)}$, then $\shco_{1/2}(W)=0$. 
\end{lemma}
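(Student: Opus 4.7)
The plan is to mimic the strategy of Lemma~\ref{lem:a3-5}: for each of the four polyhedra $X\in\{X_{\rm (a11)},X_{\rm (a12)},X_{\rm (a13)},X_{\rm (a14)}\}$ I will exhibit a simplicial collapse of $X$ onto a subpolyhedron $X'$ with $c_{1/2}(X')=0$. Collapsing a shadow yields a shadow of the same closed $4$-manifold: the $4$-dimensional thickening $\Nbd(X;W)$ deformation retracts onto $\Nbd(X';W)$ while the complement $W\setminus\Int\Nbd(X;W)$ is unaffected, so $X'$ is again a shadow of $W$. This would immediately give
\[
\shco_{1/2}(W)\le c_{1/2}(X')=0,
\]
which is the desired conclusion.

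To produce the collapses, I will read each encoding graph in Figure~\ref{fig:c=1/2}. Since $c_{1/2}(X)=1/2$ and $c(X)=0$, the singular set of $X$ consists of circles (each with a neighborhood of type $Y_{111}$, $Y_{12}$, or $Y_3$), there are no true vertices, and exactly one region is an annulus while all others are disks. In each of the four encoding graphs one finds a $D$-vertex attached to its adjacent triple-circle vertex by a single edge; the corresponding disk region has a free boundary arc on the singular set (a free face of the simple polyhedron), and collapsing across this arc removes one disk region and simplifies the singular set. Iterating, and using the presence of the annular region as a second free face after the first collapses, one reduces $X$ in each case to one of the polyhedra with $c_{1/2}=0$ listed in the case $c_{1/2}=0$ of the classification, namely $S^2$, $\RP^2$, the $2$-disk, or the Möbius band. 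The terminal polyhedron depends on whether the adjacent triple-circle vertex is $Y_{111}$, $Y_{12}$, or $Y_3$ and on whether the encoding graph contains a $Y_2$ vertex; a direct inspection of (a11)--(a14) matches each of the four polyhedra to one of these terminal shapes.

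The main work is the case-by-case combinatorial check locating a free face in each of the four encoding graphs and tracking what survives after the elementary collapses. Two subtleties must be verified. First, I must ensure that the collapse does not terminate at a closed surface of positive genus, which by Lemma~\ref{lem:closed_surf} would obstruct $X'$ from being a shadow; this is automatic because the collapse sequence, once it begins at a free face on the singular set, preserves nonemptiness of $\partial X'\cup S(X')$ except in the one case where one ends at $S^2$ or $\RP^2$, both of which are allowed. Second, I must confirm that the collapse does not require a specific gleam: simplicial collapse of a shadowed polyhedron is always legal, with the gleams on the surviving regions inherited from those on the collapsed regions, so no gleam hypothesis is needed.

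The main obstacle is thus the bookkeeping for the four encoding graphs rather than any conceptual difficulty; once the free face is located in each case, the remainder is a short collapse computation that fits the same pattern as the argument for Lemma~\ref{lem:a3-5}.
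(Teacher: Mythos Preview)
Your proposed collapsing strategy cannot work here. The polyhedra $X_{\rm (a11)},\ldots,X_{\rm (a14)}$ are \emph{closed} simple polyhedra: their encoding graphs contain no $\mathrm{B}$-vertices, so $\partial X=\emptyset$. In a closed simple polyhedron with $S(X)\ne\emptyset$, every point of the singular set lies in the closure of exactly three local sheets, so no arc in $S(X)$ is a free face of any region. Your sentence ``the corresponding disk region has a free boundary arc on the singular set (a free face of the simple polyhedron)'' is the mistake: an arc on $S(X)$ is \emph{never} free, precisely because three regions meet there. Consequently there is no elementary collapse available, and the argument of Lemma~\ref{lem:a3-5} (which worked because $X_{\rm (a3)},X_{\rm (a4)},X_{\rm (a5)}$ genuinely have nonempty boundary) does not transfer.

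The paper takes a completely different route. One reads off from the encoding graphs that each of $X_{\rm (a11)},\ldots,X_{\rm (a14)}$ is simply connected with $b_2(X)\le 2$, and of course $c(X)=0$. Hence any closed $4$-manifold $W$ with such a shadow has $\pi_1(W)=\{1\}$, $b_2(W)\le 2$, and $\shco(W)=0$. Martelli's classification (Theorem~\ref{thm:finite_pi1}) then forces $W$ to be a connected sum of at most two copies of $\pm\CP$ or $S^2\times S^2$, all of which appear in Theorem~\ref{thm:complexity0} and therefore satisfy $\shco_{1/2}(W)=0$. So the argument is not to simplify the shadow, but to identify the ambient manifold via invariants and appeal to the known list.
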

\begin{proof}
% For $X\in\{X_{\rm (a11)},X_{\rm (a12)},X_{\rm (a13)},X_{\rm (a14)}\}$, 
In each case, we have $\pi_1(W)\cong \pi_1(X)\cong \{1\}$, $b_2(W)\leq b_2(X)\leq2$ and 
$\shco(W)=c(X)=0$. 
Therefore, $\shco_{1/2}(W)=0$ by Theorem~\ref{thm:finite_pi1}. 
\end{proof}
\begin{lemma}
The simple polyhedron $X_{\rm (a16)}$ is shadows only of 
$S^2\times S^2$ and the connected sums of at most $3$ copies in $\{S^4,\CP,\mCP\}$. 
Especially, closed $4$-manifolds with $\shco_{1/2}=1/2$ admitting shadows homeomorphic to $X_{\rm (a16)}$ 
are only $3\CP$, $2\CP\#\mCP$, $\CP\#2\mCP$ and $3\mCP$. 
\end{lemma}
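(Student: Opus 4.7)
The plan is to classify all closed $4$-manifolds admitting $X_{\rm (a16)}$ as a shadow by enumerating admissible gleams, and then apply Theorem~\ref{thm:complexity0} to separate the resulting list according to whether $\shco_{1/2}$ is $0$ or $1/2$.

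First, I would read off the topology of $X_{\rm (a16)}$ from its encoding graph in Figure~\ref{fig:c=1/2}: a direct computation gives $\pi_{1}(X_{\rm (a16)})=1$ and $H_{2}(X_{\rm (a16)})\cong\Z^{3}$. Applying the Kirby diagram construction of Section~4 to this encoding graph produces a parametrized Kirby diagram of the $4$-dimensional thickening $M$, with (at most one) dotted-circle $1$-handle coming from the non-tree edge of the encoding graph and $2$-handle attaching circles, one per disk region, whose framings are prescribed by the half-integer gleams, subject to the $\Z_{2}$-gleam parity constraints dictated by the annular region.

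Second, I would impose the requirement $\partial M\cong k(S^{1}\times S^{2})$ for some $k\geq 0$. This translates into a finite arithmetic condition on the gleams: after any handle slides permitted by the dotted circle, the linking matrix of $L_{2}$ must present a unimodular $\Z$-lattice. A case-by-case analysis of the admissible gleams yields a finite list of intersection forms, each of the shape $k\langle 1\rangle\oplus k'\langle -1\rangle$ with $k+k'\leq 3$, or the hyperbolic form realizing $S^{2}\times S^{2}$. For each case, standard Kirby calculus (blow-downs and handle slides) identifies the resulting closed $4$-manifold $W$ as simply connected with $b_{2}(W)\leq 3$ and with the asserted intersection form, hence one of $S^{4}$, $\CP$, $\mCP$, $S^{2}\times S^{2}$, $2\CP$, $\CP\#\mCP$, $2\mCP$, $3\CP$, $2\CP\#\mCP$, $\CP\#2\mCP$, $3\mCP$.

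Finally, Theorem~\ref{thm:complexity0} lists the closed $4$-manifolds with $\shco_{1/2}=0$ as $S^{4}$, $\CP$, $\mCP$, $S^{2}\times S^{2}$, $2\CP$, $\CP\#\mCP$, $2\mCP$. Excluding these from the list above leaves precisely $3\CP$, $2\CP\#\mCP$, $\CP\#2\mCP$, $3\mCP$; each admits $X_{\rm (a16)}$ as a shadow (so $\shco_{1/2}\leq 1/2$) and fails to appear among the $\shco_{1/2}=0$ manifolds (so $\shco_{1/2}=1/2$). The main obstacle is the enumeration and identification step: one must carefully keep track of the $\Z_{2}$-gleam parity constraints and systematically apply Kirby moves to verify that no $4$-manifold outside the asserted list arises among the admissible diagrams, rather than relying on a single general principle.
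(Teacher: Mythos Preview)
Your plan is workable but takes a different route from the paper's argument, which is essentially two lines: it observes that $X_{\rm (a16)}$ coincides with the polyhedron $X_3$ of Subsection~\ref{subsec:Examples}, and then invokes Martelli's classification Theorem~\ref{thm:finite_pi1}. Since any closed $W$ with this shadow satisfies $\pi_1(W)\cong\pi_1(X_{\rm (a16)})=1$, $b_2(W)\leq b_2(X_{\rm (a16)})=3$, and $\shco(W)\leq c(X_{\rm (a16)})=0$, Theorem~\ref{thm:finite_pi1} immediately pins $W$ down to the stated list without any gleam enumeration or Kirby calculus; realizability of the $b_2=3$ cases is already contained in Proposition~\ref{prop:Examples}. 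Your direct approach trades this single citation for the case analysis you yourself flag as the main obstacle, and it overlooks that $c(X_{\rm (a16)})=0$ is precisely the hypothesis that makes Martelli's theorem available.

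Two small corrections to your outline. First, by the formula $n'=c_1(X)+1$ in Subsection~\ref{subsec:Kirby_diagrams_to_trisections} one gets two non-tree edges, hence two dotted circles rather than at most one. Second, every region of $X_{\rm (a16)}\cong X_3$ has $\Z_2$-gleam $0$ (each singular circle is of type $Y_{111}$), so all gleams are integers and there are no parity constraints coming from the annular region. More substantively, the step ``$\partial M\cong k(S^1\times S^2)$ translates into a finite arithmetic condition on the linking matrix'' is an oversimplification: unimodularity of the linking matrix only yields an integral homology sphere, not $S^3$, so that step still requires genuine $3$-manifold identification rather than pure arithmetic. This is surmountable here because the diagrams are small, but it is exactly the kind of work the paper avoids by appealing to Theorem~\ref{thm:finite_pi1}.
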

\begin{proof}
Note that $X_{\rm (a16)}$ is homeomorphic to $X_3$ that is the simple polyhedron constructed in Subsection~\ref{subsec:Examples}. 
By Theorem~\ref{thm:finite_pi1}, the lemma follows. 
\end{proof}
% - - - - - - - - - - - - - - - - -
\begin{figure}[tbp]
\labellist
\footnotesize\hair 2pt
\pinlabel $n$ [Bl] at  106.76 62.59
\pinlabel $m$ [B] at    67.24 6.73
% \pinlabel $n$ [Bl] at  390.22 62.59
% \pinlabel $m$ [B] at   350.70 6.73
\pinlabel $m$ [B] at   197.63 49.42
\pinlabel $n{+}4m$ [B] at   197.63 97.19
\pinlabel (i) [Br] at  19.05 45.58
\pinlabel (ii) [Br] at  174.95 45.58
% \pinlabel (iii) [Br] at  302.51 45.58
\endlabellist
\centering
\includegraphics[width=.5\hsize]{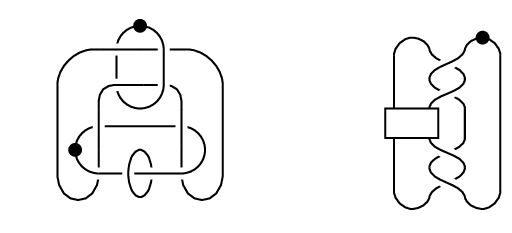}
\caption{
(i) A Kirby diagram of a $4$-dimensional thickening of the simple polyhedron $X_{\rm (a17)}^0$. }
\label{fig:Kirby_a17_0}
\end{figure}
% - - - - - - - - - - - - - - - - -
\begin{lemma}
The simple polyhedron $X_{\rm (a17)}^0$ is shadows only of $S^1\times S^3$, $\CP\#(S^1\times S^3)$ and $\mCP\#(S^1\times S^3)$. 
\end{lemma}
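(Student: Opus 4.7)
The strategy is to enumerate the $4$-dimensional thickenings $M=M_{(X_{(a17)}^0,\gl)}$ over all admissible gleam functions $\gl$, to determine for which choices the boundary is diffeomorphic to $k(S^1\times S^2)$ for some $k\in\Z_{\geq 0}$, and in each such case to identify the resulting closed $4$-manifold $W$.

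I would first observe that the two regions of $X_{(a17)}^0$---the disk $D$ bounded by the meridian, and the annular region $A$ obtained by cutting the torus along that meridian---both have vanishing $\Z_2$-gleam (since the torus is orientable and the disk attaches along a single circle in an orientable fashion). Hence the gleams are integer-valued, and I set $m=\gl(D)$ and $n=\gl(A)$. Applying the algorithm of Section~4 with a cut system consisting of a single arc in $A$, I would draw the Kirby diagram of $M$ as in Figure~\ref{fig:Kirby_a17_0}(i), and then reduce it by handle slides to the simpler form of Figure~\ref{fig:Kirby_a17_0}(ii): a single dotted $1$-handle linked geometrically with a $2$-handle whose attaching circle is null-homotopic in the complement of the $1$-handle and whose framing is $n+4m$.

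From Figure~(ii) one reads off that $\partial M$ is obtained from $S^1\times S^2=\partial(S^1\times B^3)$ by integer surgery on a null-homotopic unknot, so
\[
\partial M\;\cong\;(S^1\times S^2)\#L(n+4m,1).
\]
Consequently $\partial M$ is diffeomorphic to $k(S^1\times S^2)$ if and only if $L(n+4m,1)\in\{S^3,\,S^1\times S^2\}$, i.e.\ $n+4m\in\{-1,0,1\}$, with $k=1,2,1$ respectively. In each of these three admissible cases I would then identify $W$ directly from its handle decomposition, which consists of one $0$-handle, one $1$-handle, one $2$-handle of framing $n+4m$, $k$ three-handles and one $4$-handle. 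In all three cases $\pi_1(W)\cong\pi_1(X_{(a17)}^0)\cong\Z$. For $n+4m=0$ a dual handle-cancellation argument (the single $2$-handle cancels one of the two dual $1$-handles) identifies $W$ with $S^1\times S^3$; for $n+4m=\pm 1$ the $2$-handle is attached with framing $\pm 1$ to a trivial unknot and contributes a $\pm\CP$ summand, yielding $W\cong\CP\#(S^1\times S^3)$ or $\mCP\#(S^1\times S^3)$.

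The principal obstacle I anticipate is the reduction from Figure~(i) to Figure~(ii), and in particular justifying the $4m$ correction to the framing. This correction comes from the geometric double-linking between the attaching curves of the two region-$2$-handles produced by the algorithm of Section~4, combined with the gleam values, and requires a careful Kirby-calculus computation; once it is in hand the subsequent determination of $\partial M$ and the identification of $W$ in each admissible case are comparatively routine.
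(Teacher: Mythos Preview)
Your reduction to Figure~(ii) loses essential information. When you cancel the $m$-framed $2$-handle with a dotted circle, the two strands of the remaining attaching circle that passed through that $1$-handle acquire an $m$-twist clasp; this is why the framing jumps by $4m$, but it also means that the parameter $m$ survives in the \emph{knot type} of the attaching curve, not merely in its framing. The paper's Figure~\ref{fig:Kirby_a17_0}(ii) accordingly carries two labels, $m$ and $n+4m$. Your description of (ii) as ``a single dotted $1$-handle linked geometrically with a $2$-handle whose attaching circle is null-homotopic \ldots\ and whose framing is $n+4m$'' is correct as far as it goes, but the inference $\partial M\cong(S^1\times S^2)\#L(n+4m,1)$ does not follow: a null-homotopic curve in $S^1\times S^2$ need not bound an embedded disk, and for $m\neq 0$ this one does not. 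In particular, for $m=1,\ n=-4$ your formula would give $\partial M\cong 2(S^1\times S^2)$, whereas in fact it is not of the form $k(S^1\times S^2)$ at all.

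The paper deals with this by replacing the dotted circle by a $0$-framed unknot and recognizing the resulting surgery link as arborescent; Wu's theorem \cite[Theorem~5.1]{Wu99} then forces $m=0$ for $\partial M_0$ to be any $k(S^1\times S^2)$. Once $m=0$ the twist box disappears, the attaching circle becomes an unknot split from the dotted circle, and the identification of $W$ proceeds exactly as you outline in your final paragraph. So your endgame is fine, but the step from ``null-homotopic'' to ``surgery yields a lens-space summand'' is a genuine gap, and some external input (here Wu's theorem) is needed to rule out the $m\neq 0$ cases.
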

\begin{proof}
% We first consider the polyhedron $X_{\rm (a17)}^0$. 
Let $M_0$ be the $4$-dimensional thickening of $X_{\rm (a17)}^0$ equipped with arbitrary gleam. 
A Kirby diagram of $M_0$ is shown in Figure~\ref{fig:Kirby_a17_0}-(i), where $m,n$ are some integers. 
The attaching circle with framing $m$ is canceled with a dotted circle, 
so that we get a Kirby diagram shown in Figure~\ref{fig:Kirby_a17_0}-(ii). 
By replacing the dotted circle in the figure with a $0$-framed knot, 
we get a surgery diagram of the boundary $\partial M_0$. 
By Wu's result \cite[Theorem 5.1]{Wu99}, 
$\partial M_0$ is not homeomorphic to $k(S^1\times S^2)$ for any $k$ unless $m=0$. 
Suppose $m=0$. 
The $4$-manifold $M_0$ admits a Kirby diagram given by a $2$-component unlink consisting of one dotted circle and one unknot with framing coefficient $n$. 
Therefore, $X_{\rm (a17)}^0$ can be embedded in $S^1\times S^3$, $\CP\#(S^1\times S^3)$ and $\mCP\#(S^1\times S^3)$ as shadows. 
\end{proof}
\begin{lemma}
The simple polyhedron $X_{\rm (a17)}^1$ is shadows only of $S^1\times S^3$, $\CP\#(S^1\times S^3)$ and $\mCP\#(S^1\times S^3)$. 
\end{lemma}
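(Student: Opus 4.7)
The plan is to follow the same template as the proof for $X_{\rm (a17)}^0$, with the differences coming from the Klein-bottle relation rather than the torus relation. First I would draw a Kirby diagram of a $4$-dimensional thickening $M_1$ of $X_{\rm (a17)}^1$ equipped with an arbitrary gleam. Since $\pi_1(X_{\rm (a17)}^1)\cong\langle x,y\mid xyxy^{-1}\rangle/\langle\!\langle x\rangle\!\rangle$, the diagram consists of two dotted circles (for $x$ and $y$) and two $2$-handles: one whose attaching circle realises the Klein-bottle relator $xyxy^{-1}$ with framing determined by the gleam of the Klein-bottle region, and one attached along $x$ with framing $m$ determined by the gleam of the disk region. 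Because $y$ conjugates $x$ to $x^{-1}$ via the Klein-bottle relation, the first attaching circle passes through the $x$-handle twice with opposite orientations, giving a picture analogous to Figure~\ref{fig:Kirby_a17_0}-(i) but with a sign flip.

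Next I would cancel the $2$-handle attached along $x$ against the dotted circle representing $x$. The cancellation pulls the Klein-bottle $2$-handle out of the $x$-handle; the sign-flipped double passage will interact with the framing $m$ and produce a correction term that differs from the $4m$ appearing for $X_{\rm (a17)}^0$ (indeed, the two strands entering the $x$-handle link with opposite signs, so their linking contributions partially cancel rather than add). The outcome should be a reduced Kirby diagram with one dotted circle (for $y$) and one $2$-handle whose framing is an explicit integer combination of the two gleam values, say $n'$ depending linearly on $n$ and $m$.

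I would then replace the remaining dotted circle by a $0$-framed unknot to obtain a surgery presentation of $\partial M_1$ and invoke Wu's theorem \cite[Theorem 5.1]{Wu99} exactly as in the previous lemma. This should force $\partial M_1\cong k(S^1\times S^2)$ only when the disk-region gleam $m$ vanishes, at which point the reduced $2$-handle becomes a split unknot with some integer framing $n$. The three possible diffeomorphism types of the resulting closed $4$-manifold are then $S^1\times S^3$, $\CP\#(S^1\times S^3)$ and $\mCP\#(S^1\times S^3)$, by the same handle-manipulation argument used in the preceding proof, and each of these is realised by an appropriate choice of $n$.

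The main obstacle is the bookkeeping of the gleam-dependent framings after the cancellation: the quadratic correction arising from the Klein-bottle twist must be computed carefully in order to read off the correct formula for $n'$ and to verify that Wu's theorem still obstructs every nonzero value of $m$. Once this arithmetic is settled, the classification follows formally from the $X_{\rm (a17)}^0$ case.
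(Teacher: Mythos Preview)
Your plan diverges substantially from the paper's argument. The paper does \emph{not} proceed by cancelling a $1$-/$2$-handle pair and then invoking Wu's theorem. Instead, after replacing the dotted circles by $0$-framed unknots and performing one slum-dunk, it writes down an explicit presentation
\[
\pi_1(\partial M_1)\cong
\langle x,y,z\mid [x,z],\,[z,y^{-1}xy],\,x^nzyzy^{-1},\,z^{-1}(xy^{-1}xy)^m\rangle,
\]
reads off $H_1(\partial M_1)$ (which forces $4m+n\in\{-1,0,1\}$), and then computes elementary ideals. For $4m+n=\pm1$ one finds $E_1(\pi_1(\partial M_1))=(n+m(1+t)(1+t^{-1}))$, which by Lemma~\ref{lem:elementary_ideal} must equal $(1)$, giving $m=0$; for $4m+n=0$ a $2\times2$ minor of the Alexander matrix is $t_1^{-1}t_2(1-t_2)^2(1-t_2^{2m})$, and again Lemma~\ref{lem:elementary_ideal} forces $m=0$. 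The three realisations are then checked by hand. So the paper's proof is algebraic (Alexander invariants), not a surgery obstruction in the style of the $X_{\rm (a17)}^0$ case.

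There is also a concrete error in your setup. In the Klein-bottle relator $xyxy^{-1}$ the letter $x$ occurs twice with the \emph{same} sign, so the corresponding attaching circle passes through the $x$ $1$-handle twice with the \emph{same} orientation; it is the torus relator $xyx^{-1}y^{-1}$ whose passages are oppositely oriented. Your heuristic that ``the linking contributions partially cancel rather than add'' therefore points the wrong way---indeed the paper's homology computation shows that $4m+n$ governs $H_1(\partial M_1)$ here just as it does for $X_{\rm (a17)}^0$. More seriously, because the two strands now run parallel, the knot left after your proposed cancellation is \emph{not} the same type as in the torus case, and you have given no argument that Wu's arborescent-surgery theorem applies to it or yields $m=0$. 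That is the genuine gap in the plan; the paper's recourse to elementary ideals rather than a direct citation of Wu suggests this step does not go through by simple analogy.
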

\begin{proof}
 % - - - - - - - - - - - - - - - - -
\begin{figure}[tbp]
\labellist
\footnotesize\hair 2pt
\pinlabel $n$ [Bl] at   106.76 62.59
\pinlabel $m$ [B] at     67.24  6.73
\pinlabel (i) [Br] at    19.05 45.58
\pinlabel (ii) [Br] at  174.95 45.58
\pinlabel $n$ [Bl] at   263.66 62.59
\pinlabel $-\frac1m$ [B] at 223.14 15.23
\pinlabel $0$ [Bl] at   231.65 96.60
\endlabellist
\centering
\includegraphics[width=.55\hsize]{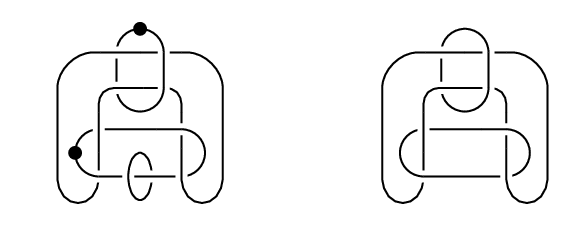}
\caption{
(i) A Kirby diagram of a $4$-dimensional thickening of the simple polyhedron $X_{\rm (a17)}^1$. 
(ii) A surgery diagram of the boundary of the $4$-dimensional thickening of the simple polyhedron $X_{\rm (a17)}^1$. }
\label{fig:Kirby_a17_1}
\end{figure}
% - - - - - - - - - - - - - - - - -
% We next consider the polyhedron $X_{\rm (a17)}^1$. 
Let $M_1$ be the $4$-dimensional thickening of $X_{\rm (a17)}^1$ equipped with arbitrary gleam, 
which is represented by a Kirby diagram shown in Figure~\ref{fig:Kirby_a17_1}-(i) for some $m,n\in\Z$. 
By replacing the dotted circles with $0$-framed unknots, 
we obtain a surgery diagram of the $3$-manifold $\partial M_1$. 
Performing a slum-dunk move once, we obtain the diagram of $\partial M_1$ shown in Figure~\ref{fig:Kirby_a17_1}-(ii). 
By an explicit computation from this diagram, 
% a presentation of the fundamental group of $\partial M_1$ can be obtained as 
we have 
\[
\pi_1(\partial M_1)\cong
\langle
x,y,z
\mid
[x,z],[z,y^{-1}xy],x^nzyzy^{-1},z^{-1}(xy^{-1}xy)^m
\rangle. 
\]

% We now suppose $\partial M_1\cong k(S^1\times S^2)$ for any $k$, 
% We then derive necessary conditions of $n$ and $m$ for $\partial M_1$ to be homeomorphic to $k(S^1\times S^2)$ for some $k\in\Z_{\geq0}$. 
Note that 
\[
H_1(\partial M_1)\cong
\left\{
\begin{array}{ll}
\Z\langle y\rangle & (4m+n=\pm1)\\
\Z\langle y\rangle\oplus\Z\langle x\rangle & (4m+n=0)\\
\Z\langle y\rangle\oplus(\Z/(4m+n)\Z\langle x\rangle) & (\text{otherwise}). 
\end{array}
\right.
\]
Therefore, in order for $\partial M_1$ to be homeomorphic to $k(S^1\times S^2)$ for some $k\in\Z_{\geq0}$, 
it is necessary that $4m+n=\pm1$ or $0$. 

Suppose $4m+n=\pm1$. 
% ,  $\partial M_1\cong S^1\times S^2$. 
By explicit calculations from the presentation of $\pi_1(\partial M_1)$, we have 
\[
E_d(\pi_1(\partial M_1))\cong
\left\{
\begin{array}{ll}
(0) & (d=0)\\
(n+m(1+t)(1+t^{-1})) & (d=1)\\
(1) & (d\geq2). 
\end{array}
\right.
\]
By Lemma~\ref{lem:elementary_ideal}, we need $m=0$ and $n=\pm1$. 
Conversely, substituting $m=0$ and $n=\pm1$ into the diagram shown in Figure~\ref{fig:Kirby_a17_1}-(i), 
we obtain a Kirby diagram given by a $2$-component unlink consisting of one dotted circle and one unknot with framing $\pm1$ after easy Kirby calculus. 
It implies that $X_{\rm (a17)}^1$ can be embedded in $\CP\#(S^1\times S^3)$ and $\mCP\#(S^1\times S^3)$ as shadows. 

Suppose $4m+n=0$. 
% , which implies $\partial M_1\cong 2(S^1\times S^2)$. 
By explicit calculations from the presentation of $\pi_1(\partial M_1)$, the Alexander matrix is given as 
\[
\left(
\begin{array}{ccc}
1-t_2^{2m} & 0 & t_2(1-t_2) \\
t_1^{-1}t_2^{2m}(1-s^{2m}) & t_1^{-1}(1-t_2)(1-t_2^{2m}) & 1-t^2 \\
\frac{1-t_2^n}{1-t_2} & t_2^{2m+n}(1-t_2^{2m}) & t_2^{n}(1+t_1t_2^{2m})\\
t_2^{-2m}(1+t_1^{-1}t_2)\frac{1-t_2^{2m}}{1-t_2} & t_1^{-1}t_2^{1-2m}(t_2-1)\frac{1-t_2^{2m}}{1-t_2} & -t_2^{-2m}
\end{array}
\right), 
\]
where 
$t_1$ and $t_2$, respectively, are the images of $y$ and $x$ by the homomorphism $\Z\pi_1(\partial M_1)\to\Z[t_1^{\pm1},t_2^{\pm1}]$ 
induced by the abelianization $\pi_1(\partial M_1)\to H_1(\partial M_1)$. 
The upper-right $2\times2$-minor is $t_1^{-1}t_2(1-t_2)^2(1-t_2^{2m})$, 
and hence $((1-t_2)^2(1-t_2^{2m}))\subset E_1(\pi_1(\partial M_1))$. 
By Lemma~\ref{lem:elementary_ideal}, $E_1(\pi_1(\partial M_1))$ must be $(0)$, 
so we need $m=0$. 
Since $4m+n=0$, we have $n=0$. 
Conversely, substituting $m=n=0$ into the diagram shown in Figure~\ref{fig:Kirby_a17_1}-(i), 
we obtain a diagram given by $2$-component unlink consisting of one dotted circle and one unknot with framing $0$ after easy Kirby calculus. 
It implies that $X_{\rm (a17)}^1$ can be embedded in $S^1\times S^3$ as a shadow. 
\end{proof}
\begin{lemma}
The simple polyhedron $X_{\rm (m1)}$ is a shadow only of $S^1\times S^3$. 
\end{lemma}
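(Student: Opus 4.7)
The plan is to follow the template of the proof already used for $X_{\rm (a1)}$ and $X_{\rm (a2)}$, exploiting the fact that $X_{\rm (m1)}$ is so simple that its $4$-dimensional thickening is forced. First I would unpack the encoding graph in Figure~\ref{fig:c=1/2}-(m1) to identify $X_{\rm (m1)}$ as the M\"obius band itself, viewed as a simple polyhedron with $S(X_{\rm (m1)})=\emptyset$, boundary a single circle, and a single (boundary) region equal to the entire polyhedron. In particular, $X_{\rm (m1)}$ has no internal region, so there is no gleam to prescribe, and its $4$-dimensional thickening $M_{X_{\rm (m1)}}$ is determined up to diffeomorphism by $X_{\rm (m1)}$ alone.

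The second step is to identify this thickening as $S^1\times B^3$. Since the M\"obius band has Euler characteristic $0$ and non-empty boundary, it admits a handle decomposition with a single $0$-handle and a single $1$-handle, so $M_{X_{\rm (m1)}}$ is obtained from $B^4$ by attaching a single $4$-dimensional $1$-handle, which up to diffeomorphism yields $S^1\times B^3$. Equivalently, a Kirby diagram of $M_{X_{\rm (m1)}}$ consists of a single dotted circle with no framed link components, directly presenting $S^1\times B^3$. The boundary is therefore $S^1\times S^2$.

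Finally, any closed $4$-manifold $W$ admitting $X_{\rm (m1)}$ as a shadow is obtained by gluing $1(S^1\times B^3)$ to $M_{X_{\rm (m1)}}\cong S^1\times B^3$ along the common boundary $S^1\times S^2$. By the theorem of Laudenbach and Po\'enaru cited in Section~\ref{subsec:Trisections}, this gluing is unique up to diffeomorphism, and the resulting closed $4$-manifold is $S^1\times S^3$.

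The only potentially delicate point in the argument is the compatibility between the non-orientability of the M\"obius band and the orientability required of the ambient $4$-manifold: the normal bundle in any orientable thickening must be the unique non-orientable rank-$2$ bundle whose first Stiefel--Whitney class cancels that of the tangent bundle of the M\"obius band. However, this subtlety disappears in the end because $\pi_0(\mathrm{Diff}(B^3))$ is trivial, so the single $4$-dimensional $1$-handle attachment is unambiguous. Once $M_{X_{\rm (m1)}}\cong S^1\times B^3$ is established, the rest of the proof is automatic.
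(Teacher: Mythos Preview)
Your argument is correct and matches the paper's own (two-sentence) proof, which simply records that $X_{\rm (m1)}$ has a unique $4$-dimensional thickening, namely $S^1\times B^3$, and hence is a shadow only of $S^1\times S^3$. Two small inaccuracies worth fixing: Laudenbach--Po\'enaru is invoked in the gleams subsection rather than in Subsection~\ref{subsec:Trisections}, and $\pi_0(\mathrm{Diff}(B^3))\cong\Z/2\Z$ rather than trivial---but since you are working in the oriented category the orientation-preserving part is connected, so the $1$-handle attachment is indeed unambiguous and your conclusion stands.
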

\begin{proof}
The simple polyhedron $X_{\rm (m1)}$ have a 
unique $4$-dimensional thickening, which is $S^1\times B^3$. 
Hence, it is a shadow only of $S^1\times S^3$. 
\end{proof}
\begin{lemma}
\label{lem:m5}
The simple polyhedron $X_{\rm (m5)}$ is shadows only of $\mathcal{S}_2$ and $\mathcal{S}_2'$. 
\end{lemma}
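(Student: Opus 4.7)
The plan is to mirror the strategy used for $X_{\rm (a9)}$ and $X_{\rm (a10)}$: extract the three invariants (fundamental group, second Betti number, number of true vertices) directly from the combinatorics of $X_{\rm (m5)}$, feed them into Martelli's Theorem~\ref{thm:finite_pi1}, and then verify the realizability by explicit gleams.

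First I would read off the cell structure of $X_{\rm (m5)}$ from its encoding graph in Figure~\ref{fig:c=1/2}-(m5). Because $X_{\rm (m5)}$ lies in the $c_{1/2}=1/2$ list it has no true vertex, all but one region are disks, and the exceptional region is a M\"obius band (this is what distinguishes the ``m'' family from the ``a'' family). From this I would write down a presentation of $\pi_1(X_{\rm (m5)})$ using the $1$-skeleton of the CW structure provided by the singular set together with arcs dual to the $2$-cells; the M\"obius-band region contributes a relation of the form $w^2=1$ for the appropriate loop $w$, while the disk regions kill their respective attaching loops. I expect the presentation to simplify to $\pi_1(X_{\rm (m5)})\cong \Z/2\Z$, and a short Euler-characteristic count combined with $H_1\cong \Z/2\Z$ yields $b_2(X_{\rm (m5)})=1$.

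Given these data, the number of true vertices is $0$, so any closed $4$-manifold $W$ admitting $X_{\rm (m5)}$ as a shadow has $\shco(W)=0$ and $|\pi_1(W)|\le |\pi_1(X_{\rm (m5)})|=2$. Theorem~\ref{thm:finite_pi1} then forces
\[
W\cong W'\#h(S^2\times S^2)\#k\CP\#l\mCP
\]
with $W'\in\{S^4,\mathcal{S}_2,\mathcal{S}'_2,\mathcal{S}_3\}$. The constraints $\pi_1(W)\cong \Z/2\Z$ and $b_2(W)\le b_2(X_{\rm (m5)})=1$ eliminate $S^4$, $\mathcal{S}_3$, and all nontrivial connected summands, leaving only $W\in\{\mathcal{S}_2,\mathcal{S}'_2\}$.

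For the converse I would exhibit two gleam functions on $X_{\rm (m5)}$ whose thickenings have boundary $S^1\times S^2$ and which recover $\mathcal{S}_2$ and $\mathcal{S}'_2$ respectively. The most efficient route is to pass to a Kirby diagram of the thickening $M=\Nbd(X_{\rm (m5)};W)$ by the procedure of Section~4, obtaining a diagram with one dotted circle and a small number of $2$-handles whose framings are parametrized by the gleams; Kirby calculus on the two choices of gleam realizing $\partial M\cong S^1\times S^2$ should reduce to the standard Kirby pictures of $\mathcal{S}_2$ and $\mathcal{S}'_2$ (the two distinct $4$-manifolds obtained from $S^1\times S^3$ by surgery along a loop representing $2\in\Z\cong\pi_1$).

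The main obstacle I anticipate is the Kirby-calculus verification in the realizability step: the two distinct diffeomorphism types $\mathcal{S}_2$ and $\mathcal{S}'_2$ are distinguished by their second Stiefel--Whitney classes, so one must check that the parity encoded by $\gl_2$ of the M\"obius region actually toggles between the two manifolds rather than realizing one of them twice. The routine part is the group-theoretic computation of $\pi_1(X_{\rm (m5)})$ and $H_\ast(X_{\rm (m5)})$; the delicate part is producing gleams whose associated $3$-manifold boundary is $S^1\times S^2$ and matching both $w_2$-types.
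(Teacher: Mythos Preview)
Your plan mirrors the paper's proof almost exactly: compute $\pi_1(X_{\rm (m5)})\cong\Z/2\Z$ and $b_2(X_{\rm (m5)})=1$, feed these into Theorem~\ref{thm:finite_pi1}, and then exhibit gleams realizing $\mathcal{S}_2$ and $\mathcal{S}'_2$. The paper simply asserts specific gleam values rather than carrying out the Kirby calculus you propose, but the content is the same.

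There is one small gap in your elimination step. The inequality $b_2(W)\le b_2(X_{\rm (m5)})=1$ together with $\pi_1(W)\cong\Z/2\Z$ and $\shco(W)=0$ does \emph{not} by itself exclude $\mathcal{S}_2\#(\pm\CP)$ or $\mathcal{S}'_2\#(\pm\CP)$, all of which lie in Martelli's list with $b_2=1$. What actually pins $b_2(W)$ down to $0$ is an Euler-characteristic count: one has $\chi(X_{\rm (m5)})=\sum_R\chi(R)=2$, hence $\chi(W)=\chi(X)+1-k=3-k$ where $k$ is the number of $S^1\times B^3$ pieces capping off $\Nbd(X;W)$; on the other hand $b_1(W)=b_3(W)=0$ gives $\chi(W)=2+b_2(W)$, so $b_2(W)=1-k$. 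Since $\pi_1(W)\cong\Z/2\Z$ is nontrivial, the dual handle decomposition requires at least one $1$-handle, i.e.\ $k\ge1$, forcing $k=1$ and $b_2(W)=0$. With that in hand your application of Theorem~\ref{thm:finite_pi1} goes through. (The paper is equally terse at this point and does not spell out this step either.)
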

\begin{proof}
We have $\pi_1(X_{\rm (m5)})\cong \Z/2\Z$ and $b_2(X_{\rm (a9)})=1$. 
By Theorem~\ref{thm:finite_pi1}, if $X_{\rm (m5)}$ is a shadow of a closed $4$-manifold, 
it is nothing but $\mathcal{S}_2$ or $\mathcal{S}_2'$. 
Actually, a gleam on $X_{\rm (a9)}$ defined by $\gl(R_1)=1,\gl(R_2)=-1$ and $\gl(R_3)=1$ 
gives $\mathcal{S}_2$, 
where $R_1$ and $R_2$ are two disk regions of $X_{\rm (a9)}$ and 
$R_3$ is the annular region of $X_{\rm (a9)}$. 
If we equip $X_{\rm (m5)}$ with gleams $\gl(R_1)=1,\gl(R_2)=-1$ and $\gl(R_3)=0$, 
it yields $\mathcal{S}_2'$. 
\end{proof}

\begin{theorem}
\label{thm:complexity1/2}
The $1/2$-weighted shadow-complexity of a closed $4$-manifold $W$ is $1/2$ 
if and only if 
$W$ is diffeomorphic to either one of $3\CP$, $2\CP\#\mCP$, $\CP\#2\mCP$, $3\mCP$, 
$S^1\times S^3$, $(S^1\times S^3)\#\CP$, $(S^1\times S^3)\#\mCP$, $\mathcal{S}_2$, $\mathcal{S}_2'$ or $\mathcal{S}_3$. 
\end{theorem}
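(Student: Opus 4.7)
The plan is to synthesize the preparatory lemmas from the previous two subsections. The argument exploits the fact that $\shco_{1/2}(W) = 1/2$ means $W$ admits some shadow $X$ realizing $c_{1/2}(X) = 1/2$ but no shadow of smaller $1/2$-weighted complexity. Any such $X$ has no true vertices and exactly one non-disk region (an annulus or a M\"obius band), so $X$ must be homeomorphic to one of the polyhedra enumerated in Figure~\ref{fig:c=1/2}, with the refinements $X_{\rm (a15)}^0, X_{\rm (a15)}^1, X_{\rm (a17)}^0, X_{\rm (a17)}^1$ to resolve the non-uniqueness of encoding graphs with cycles. Thus the classification reduces to checking, case by case, which of these polyhedra can occur as a shadow of a closed $4$-manifold, and which of the resulting manifolds actually satisfy $\shco_{1/2}(W) = 1/2$ rather than $\shco_{1/2}(W) = 0$.

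For the sufficiency direction, I would exhibit shadows realizing $c_{1/2} = 1/2$ for each manifold in the list: $S^1 \times S^3$ as the thickening of $X_{\rm (a1)}$ or $X_{\rm (m1)}$; the four manifolds $3\CP, 2\CP\#\mCP, \CP\#2\mCP, 3\mCP$ via $X_{\rm (a16)} = X_3$ from Subsection~\ref{subsec:Examples}; $(S^1\times S^3)\#(\pm\CP)$ via $X_{\rm (a17)}^0$ with appropriate gleams; $\mathcal{S}_3$ via $X_{\rm (a9)}$; and $\mathcal{S}_2, \mathcal{S}_2'$ via $X_{\rm (m5)}$. This gives $\shco_{1/2}(W) \leq 1/2$. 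To upgrade this to equality, I would invoke Theorem~\ref{thm:complexity0}, which classifies the manifolds with $\shco_{1/2} = 0$ and shows that none of the listed manifolds appear in that list (for the connected sums, by counting second Betti numbers; for $S^1 \times S^3$, $(S^1 \times S^3) \# (\pm \CP)$, $\mathcal{S}_2$, $\mathcal{S}_2'$, $\mathcal{S}_3$, by a fundamental-group obstruction).

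For the necessity direction, given a shadow $X$ of $W$ with $c_{1/2}(X) = 1/2$, I would run through the enumeration. Lemmas~\ref{lem:a6-8_m3-4}, \ref{lem:a15}, and \ref{lem:m2} eliminate $X_{\rm (a6)}, X_{\rm (a7)}, X_{\rm (a8)}, X_{\rm (a15)}^0, X_{\rm (a15)}^1, X_{\rm (m2)}, X_{\rm (m3)}, X_{\rm (m4)}$ as shadows of any closed $4$-manifold. The cases $X_{\rm (a3)}, X_{\rm (a4)}, X_{\rm (a5)}, X_{\rm (a11)}, X_{\rm (a12)}, X_{\rm (a13)}, X_{\rm (a14)}$ collapse onto a simpler shadow (sphere or $\RP^2$) or occur already in Theorem~\ref{thm:complexity0}, so they force $\shco_{1/2}(W) = 0$ and are excluded. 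The remaining shadows $X_{\rm (a1)}, X_{\rm (a2)}, X_{\rm (a9)}, X_{\rm (a10)}, X_{\rm (a16)}, X_{\rm (a17)}^0, X_{\rm (a17)}^1, X_{\rm (m1)}, X_{\rm (m5)}$ are matched by their respective lemmas to manifolds lying exactly in the stated list.

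The main subtlety lies in reconciling the gap acknowledged in the remark following Lemma~\ref{lem:a10}: that lemma shows only that a manifold admitting $X_{\rm (a10)}$ as a shadow must be $\mathcal{S}_2$ or $\mathcal{S}_2'$, without verifying the converse. This is resolved by Lemma~\ref{lem:m5}, which explicitly constructs shadows $X_{\rm (m5)}$ of both $\mathcal{S}_2$ and $\mathcal{S}_2'$ with $c_{1/2} = 1/2$, thereby ensuring $\shco_{1/2}(\mathcal{S}_2), \shco_{1/2}(\mathcal{S}_2') \leq 1/2$, and combined with Theorem~\ref{thm:complexity0} these are $= 1/2$. Beyond this bookkeeping, the theorem is a direct concatenation of the preparatory lemmas; the genuine difficulty is concentrated in the non-existence arguments of Lemmas~\ref{lem:a15} and \ref{lem:m2} and in the Kirby-calculus analyses for $X_{\rm (a17)}^0$ and $X_{\rm (a17)}^1$, not in the final assembly.
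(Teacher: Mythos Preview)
Your proposal is correct and matches the paper's approach exactly: the paper states Theorem~\ref{thm:complexity1/2} immediately after the full sequence of preparatory lemmas and offers no separate proof, so the argument is precisely the case-by-case assembly you describe, with the same division into non-existence cases, collapsing/low-$b_2$ cases, and classification cases, and with the $X_{\rm (a10)}$ gap closed by Lemma~\ref{lem:m5} as you note.
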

%==================================================================================


\begin{thebibliography}{99}
\bibitem{Cos05}
F. Costantino, 
\textit{Shadows and branched shadows of 3 and 4-manifolds}.
Scuola Normale Superiore, Edizioni della Normale, Pisa, Italy, 2005. 

\bibitem{Cos06}
F. Costantino,
\textit{Stein domains and branched shadows of $4$-manifolds},
Geom. Dedicata {\bf 121} (2006), 89--111.

\bibitem{Cos06b}
F. Costantino,
\textit{Complexity of $4$-manifolds}, 
Exp. Math. {\bf 15} (2006), no. 2, 237--249.

\bibitem{Cos08}
F. Costantino,
\textit{Branched shadows and complex structures on $4$-manifolds},
J. Knot Theory Ramif. {\bf 17} (2008), no. 11, 1429--1454.

\bibitem{CT08}
F. Costantino and D.~Thurston,
\textit{$3$-manifolds efficiently bound $4$-manifolds},
J. Topol. {\bf 1} (2008), no. 3, 703--745. 

\bibitem{GK16}
D. Gay and R. Kirby, 
\textit{Trisecting $4$-manifolds}, 
Geom. Topol. {\bf 20} (2016), no. 6, 3097--3132.

\bibitem{GS99}
R. Gompf and A. Stipsicz, 
\textit{$4$-manifolds and Kirby calculus}, 
Grad. Studies in Math., 20, Amer. Math. Soc., Providence, 1999.

\bibitem{IK17}
M.~Ishikawa and Y.~Koda, 
\textit{Stable maps and branched shadows of $3$-manifolds},
Math. Ann. {\bf 367} (2017), no. 3-4, 1819--1863.

\bibitem{KMN18}
Y. Koda, B. Martelli and H. Naoe, 
\textit{Four-manifolds with shadow-complexity one}, 
Ann. Fac. Sci. Toulouse Math. (6), \textbf{31} (2022), no. 4, 1111--1212. 

\bibitem{KN20}
Y. Koda and H. Naoe, 
\textit{Shadows of acyclic $4$-manifolds with sphere boundary},
Algebr. Geom. Topol. {\bf 20} (2020), no. 7, 3707--3731

\bibitem{LP72}
F. Laudenbach, V. Po\'{e}naru, 
\textit{A note on $4$-dimensional handlebodies}, 
Bull. Soc. Math. France {\bf 100} (1972), 337--344.

\bibitem{Mar05} 
B. Martelli, 
\textit{Links, two-handles, and four-manifolds}, 
Int. Math. Res. Not. IMRN  {\bf 2005},  no. 58, 3595--3623. 

\bibitem{Mar11} 
B. Martelli, 
\textit{Four-manifolds with shadow-complexity zero}, 
Int. Math. Res. Not. IMRN  {\bf 2011},  no. 6, 1268--1351. 

\bibitem{Mei18}
J. Meier, 
\textit{Trisections and spun four-manifolds},
Math. Res. Lett. {\bf 25} (2018), no. 5, 1497--1524. 

\bibitem{MSZ16}
J. Meier, T. Schirmer and A. Zupan, 
\textit{Classification of trisections and the generalized property R conjecture}, 
Proc. Amer. Math. Soc. {\bf 144} (2016), no.11, 4983–--997.

\bibitem{MZ17}
J. Meier and A. Zupan, 
\textit{Genus-two trisections are standard}, 
Geom. Topol. {\bf 21} (2017), no. 3, 1583--1630. 

\bibitem{MZ18}
J. Meier and A. Zupan, 
\textit{Bridge trisections of knotted surfaces in 4-manifolds},
Proc. Natl. Acad. Sci. USA {\bf 115} (2018), no. 43, 10880--10886.

\bibitem{Nao17}
H. Naoe, 
\textit{Shadows of $4$-manifolds with complexity zero and polyhedral collapsing}, 
Proc. Amer. Math. Soc. {\bf 145} (2017), no. 10, 4561--4572. 

\bibitem{Nao23} 
H. Naoe, 
\textit{The special shadow-complexity of $\#k(S^1\times S^3)$}, 
preprint. arXiv:2309.09225.

% \bibitem{Pao77}
% P. S. Pao, 
% \textit{The topological structure of $4$-manifolds with effective torus actions. I}, 
% Trans. Amer. Math. Soc. {\bf 227} (1977), 279--317.

\bibitem{Tur94}
V.G. Turaev, 
\textit{Quantum invariants of knots and $3$-manifolds}, 
De Gruyter Studies in Mathematics, vol 18, Walter de Gruyter \& Co., Berlin, 1994.

\bibitem{Wil20}
M. Williams, 
\textit{Trisections of Flat Surface Bundles over Surfaces}, 
PhD Thesis, The University of Nebraska (2020), 81 pp.

\bibitem{Wu99}
Y.-Q. Wu, 
\textit{Dehn Surgery on Arborescent Links}, 
Trans. Am. Math. Soc. \textbf{351} (1999),
no. 6, p. 2275--2294.

\end{thebibliography}
\end{document}